\documentclass[hidelinks,onefignum,onetabnum]{siamart250211}


\usepackage{amsfonts,amsmath,amssymb}
\usepackage{graphicx}
\usepackage{epstopdf}
\usepackage{algorithmic}
\usepackage{cleveref}
\usepackage{mathtools}
\usepackage{comment}
\usepackage{todonotes}

\usepackage{lipsum}
\ifpdf
  \DeclareGraphicsExtensions{.eps,.pdf,.png,.jpg}
\else
  \DeclareGraphicsExtensions{.eps}
\fi

\usepackage{enumitem}
\setlist[enumerate]{leftmargin=.5in}
\setlist[itemize]{leftmargin=.5in}


\newcommand{\BYDEF}{\mathrel{\overset{\makebox[0pt]{\textup{\tiny def}}}{=}}}
\def\a{\alpha}
\def\b{\beta}
\def\d{\delta}
\def\e{\epsilon}

\def\l{\lambda}
\def\ph{\varphi}

\def\lo{\scalebox{1}{\L}}

\def\O{\cl F}
\def\D{{\mathcal D}}
\def\Cp{{\mathcal C_+}}
\def\Mp{{\mathcal M_+}}

\def\NN{\mathbb{N}}
\def\RR{\mathbb{R}}
\def\rng{\mathcal{R}}
\def\drng{\cl D}
\def\pos{\mathrm{\cl Pos}}
\def\bnd{\flat} 
\newcommand{\norm}[1]{\|{#1}\|}
\newcommand{\cl}[1]{\mathcal{#1}}
\newcommand{\bm}[1]{\boldsymbol{#1}}
\newcommand{\vspan}[1]{\langle{#1}\rangle}
\newcommand{\apply}[2]{\langle {#1}, {#2} \rangle}
\newcommand{\mult}[2]{ {#1} \cdot {#2}}
\def\dop{\mathfrak{d}}
\def\pop{\mathfrak{p}}

\newcommand*{\QED}[1][$\square$]{%
\leavevmode\unskip\penalty9999 \hbox{}\nobreak\hfill
    \quad\hbox{#1}%
}

\def\brev{\color{black}}
\def\erev{\color{black}}

\newsiamremark{remark}{Remark}
\newsiamremark{hypothesis}{Hypothesis}
\crefname{hypothesis}{Hypothesis}{Hypotheses}
\newsiamthm{claim}{Claim}


\newsiamremark{assumption}{Assumption}
\newsiamremark{condition}{Condition}
\newsiamremark{example}{Example}

\headers{The Effective Countable Generalized Moment Problem}{L. Gamertsfelder and B. Mourrain}

\title{The Effective Countable Generalized Moment Problem
}

\author{Lucas Gamertsfelder\thanks{Centre Inria of Université Côte d'Azur, 2004 Route des Lucioles, 06902 Valbonne, France 
  (\email{Lucas.Gamertsfelder@inria.fr}).}
\and Bernard Mourrain\thanks{Centre Inria of Université Côte d'Azur, 2004 Route des Lucioles, 06902 Valbonne, France  
  (\email{Bernard.Mourrain@inria.fr}).} }

\usepackage{amsopn}

\ifpdf
\hypersetup{
  pdftitle={The Effective Countable Generalized Moment Problem},
  pdfauthor={L. Gamertsfelder and B. Mourrain}
}
\fi




\begin{document}

\maketitle

\begin{abstract}
We establish new convergence rates for the Moment-Sum-of-Squares (Moment-SoS) relaxations for the Generalized Moment Problem (GMP) with countable moment constraints on vectors of measures,  under dual optimum attainment, $S$-fullness and Archimedean conditions. These bounds, which adapt to the geometry of the underlying semi-algebraic set, apply to both the convergence of optima, and to the convergence in Hausdorff distance between the relaxation feasibility set and the GMP feasibility set. 
\brev
We show that under the previous conditions, the sequence of optimizers of the relaxations converge to the optimizer of the GMP for the weak$^*$ topology, provided this optimal measure is unique.
\erev
This research provides quantitative geometry-adaptive rates for GMPs cast as linear programs on measures. It complements earlier analyses of specific GMP instances (e.g., polynomial optimization) as well as recent methodological frameworks that have been applied to volume computation and optimal control. 
We apply the convergence rate analysis to symmetric tensor decomposition problems, providing new effective error bounds for the convergence of the Moment-SoS hierarchies for tensor decomposition.
\end{abstract}

\begin{keywords}
Generalized moment problems, Moment-SoS relaxations, Positivstellensatz, convergence rates, symmetric tensor decomposition
\end{keywords}

\begin{MSCcodes}
44A60, 90C22, 15A69, 14P10 
\end{MSCcodes}

The Generalized Moment Problem (GMP) extends classical moment problems into the framework of conic linear programming over positive Borel measures. 
Building on moment problems such as those of Hamburger, Stieltjes, and Hausdorff, the GMP incorporates an optimization: finding measures that both satisfy moment constraints and minimize a given linear functional.
With applications in numerous fields including control, signal processing, and statistics (see e.g. \cite{lasserre2018moment}), moment problems remain an active area of research since their introduction in the 19th century (see \cite{akhiezer2020classical, landau1987moments, schmudgen2017} 
for a historical overview). 

Moment-Sum-of-Squares (SoS) relaxations provide a systematic approach to solving the GMP 
\cite{lasserre2001global, lasserre2018moment}. The method proceeds in two  steps: first relaxing the cone of positive Borel measures to non-negative linear functionals over an infinite-dimensional quadratic module; a relaxation that is exact under the Archimedean condition via Putinar's Positivstellensatz \cite{putinar1993positive}. Second, truncating the quadratic module's degree to yield a hierarchy of semi-definite programs (SDPs), with accuracy improving as the truncation degree increases. This approach has been successfully applied to Polynomial Optimization Problems (POP), which are a special class of moment problems. 
While the convergence of Moment-SoS relaxations for POP was proven (under an Archimedean condition) in \cite{lasserre2001global}, only recent contributions provide polynomial convergence rates \cite{Fang2020,laurent2023effective,baldi2023effective,BaldiSlot2024,baldi2024lojasiewicz}.

For the GMP, Moment-SoS relaxations were first investigated in \cite{lasserre2008semi}, where qualitative convergence of the optimal values was established under Archimedean and strict positivity conditions. 
The analysis of Moment-SoS hierarchies for GMPs on a single positive measure was further investigated e.g. in
\cite{curto1996solution,CurtoFialkow2005}
for the solution of GMP via flat truncation, in \cite{Nie2014,HuangNieYuan24} regarding the uniqueness of the maximizer and the exactness of the relaxation under feasibility  and regularity conditions, 
or in \cite{Laurent2009} via sparse flat extension properties.
For an extended overview on moment problems and positive polynomials, we refer to \cite{schmudgen2017}.

The study of convergence rates for the Moment-SoS hierarchy in the GMP context has also advanced. For specific applications, such as Polynomial Optimization Problems (where there is a single moment equality constraint) \cite{schweighofer_complexity_2004,nie_complexity_2007,klerk2019survey,Fang2020,laurent2023effective,baldi2023effective,BaldiSlot2024}, optimal control problems (with moment equality constraints on a single occupational measure) \cite{korda2017convergence}, and volume computation (exploiting Stokes equations for moment constraints) \cite{tacchi2023stokes}, convergence rates have been analyzed.
More recently, \cite{schlosser2024convergence} introduced a methodological framework to compute bounds on convergence rates of the optima for GMPs involving vectors of measures with moment equality constraints, leveraging an Effective Putinar's Positivstellensatz \cite{baldi2023effective}. This framework was applied to derive polynomial convergence rates for applications including optimal control and volume computation.
However, explicit rates for general cases of GMPs were not established.

In this paper, we establish convergence rates for both the optima and feasibility sets for GMPs with countable moment constraints on vectors of measures. We define a linear program on measures as one where the unknown is a vector $\mu= [\mu_1, \ldots, \mu_m]$ of positive measures $\mu_i$ supported on compact sets $S_i\subset \RR^{n_i}$.
Where $J$ is a countable index set, the constraints are of the form $t - \apply{\mu}{h} \in K$ where $K$ is a cone in $\RR^{J}$, $t\in \RR^{J}$, $h=(h_{i,j})\in (\prod_{i=1}^{m} \cl C_i)^{J}$ is a matrix of continuous functions $h_{i,j}\in \cl C_i$ on $S_i$ and $\apply{\mu}{h}= [\sum_{i=1}^{m} \int_{S_i} h_{i,j} d\mu_i, \ j \in J]$. The objective function is of the form $\apply{\mu}{f}=\sum_{i=1}^{m} \int_{S_i} f_i d\mu_i$ where $f= (f_1, \ldots, f_m) \in \prod_{i=1}^{m} \cl C_i$. This linear program on measures is approximated by a hierarchy of Moment-SoS convex relaxations (also known as Lasserre's relaxation).

We establish the convergence of these hierarchies under dual optimum attainment, $S$-fullness and Archimedean conditions, and provide polynomial convergence rates that adapt to the underlying semi-algebraic set geometry. 
Our results apply both to the convergence of optima and to the convergence of the moment feasibility sets. 
Our main results are the following.\\
\noindent{\textbf{Theorem A.}} (See  \Cref{cor:gmp_convergence}) {\it\ Under  \Cref{as:x-full}, \Cref{as:dual_optimizer} and  \Cref{as:archimedean},
$$
0 \le \pop - \pop_{\ell} \le \dop -\dop_{\ell} \le \kappa \,\ell^{-\theta} 
$$
where $\pop$, $\dop$ (resp. $\pop_\ell$, $\dop_\ell$) are the optima of the (resp. relaxation of the) primal and dual Generalized Moment Problem \eqref{eq:gmp_obj} and \eqref{eq:gmp_dual_obj}  (resp. at order $\ell\in \NN$), and $\kappa, \, \theta$
are constants depending explicitly on the input data of the GMP.
}\\

\noindent{\textbf{Theorem B}.} (See \Cref{thm:haus_dis})  {\it\ Under  \Cref{as:x-full}, \Cref{as:dual_optimizer} and  \Cref{as:archimedean}, for $\ell \ge k$, 
$$
    \rho_H(\O^{(k)}, L^{(k)}_\ell) \le  \kappa' \, \ell^{-\theta},
$$
where $\O^{(k)}$, $L_{\ell}^{(k)}$ are respectively the cones of 
moment sequences truncated at order $k$ of positive measures and truncated linear functionals positive on the degree $\ell$ truncated quadratic module, and $\rho_H$ is the Hausdorff distance.
}

These results establish polynomial rates of convergence for GMPs, building on the recent progress on the Effective Positivstellensatz (see \Cref{thm:putinar_effective}). 

\brev
As a consequence, we deduce (see \Cref{thm:unique opt})  that 
under Assumptions \ref{as:x-full}, \ref{as:dual_optimizer} and  \ref{as:archimedean}, the sequence of optimizers of the relaxations converge to the optimizer of the GMP, provided there is a unique optimal measure of the GMP.
\erev

We apply this analysis to minimal symmetric tensor decomposition (\Cref{sec:applications}).
Symmetric tensors play important roles in independent component analysis, telecommunications, and psychometrics; see \cite{kolda2009tensor, brachat2010symmetric} for overviews.
Additive decompositions of a tensor provide a convenient representation, and minimal decompositions are particularly valuable \cite{mourrain2020minimal}.
Current algorithms can recover symmetric decompositions \cite{brachat2010symmetric}, and minimal forms for low-rank tensors \cite{mourrain2020minimal}. However, high-rank tensor decompositions remain challenging. We propose an optimization approach to this problem, formulating the minimal decomposition as a GMP. Our approach leverages the homogeneous polynomial representation of symmetric tensors, and the \textit{apolar product}.
A homogeneous polynomial admits a Waring decomposition when its dual linear functional with respect to the apolar product can be represented as a finite sum of weighted Dirac measures. The GMP formulation is based upon finding such a minimal representation.
We prove that these GMP problems have no duality gap, and  we provide new results of attainment of optima and new convergence rates (\Cref{thm:tensordec_pos}, \Cref{thm:tensordec_real}).

The paper is structured as follows.
In \Cref{sec:GMP}, we formally introduce the GMP and its dual formulation. We present the $S$-fullness condition, which ensures both strong duality between the primal and dual formulations, and compactness in the primal. Additionally, we verify that the $S$-fullness condition holds in the case of the POP.
In \Cref{sec:moment-SOS}, we introduce the Moment-Sum-of-Squares (SoS) relaxations of the GMP and analyze their rate of convergence under the $S$-fullness condition and Archimedean condition. We prove our first main result, \Cref{cor:gmp_convergence}, providing bounds on the difference in value between the GMP and its Moment-SoS relaxation. We also prove the second main result, \Cref{thm:haus_dis}, providing bounds on the Hausdorff distance between the GMP cone and the relaxed cones.
Finally, in \Cref{sec:applications}, we investigate symmetric tensor decomposition problems and formulate them as GMPs. We verify that the $S$-fullness condition holds and state the convergence rates of the moment relaxations using the main results.
We provide examples illustrating the numerical behavior of the approach.

\section{The Generalized Moment Problem}\label{sec:GMP}

Let $S\subset \RR^n$ be a compact subset of $\RR^n$. We denote by $\cl C(S)$ the set of continuous functions on $S$.
It is a Banach space when endowed with the sup-norm: for $q\in \cl C(S)$, $\|q\|_{\infty}= \max_{x\in S} |q(x)|$.
We denote by $\cl C_{+}(S)$ the convex cone of continuous functions $f\in \cl C(S)$ which are positive on $S$: $\forall x\in S, f(x) \ge 0$.

We denote by $\cl M(S)= \cl C(S)^{*}$ the space of continuous linear functionals on $\cl C(S)$, equipped with the weak$^*$ topology and the operator norm: for $\lambda \in \cl M(S)$, $\|\lambda\|_{\infty} = \max_{\|q\|_{\infty}\le 1} \apply{\lambda}{q}$. In the weak$^*$ topology, a sequence $\{\lambda_n\} \subset \cl M(S)$ converges to $\lambda \in \cl M(S)$ if and only if
\begin{equation}
\forall \, q \in \cl C(S), \quad \apply{\lambda_n}{q} \to \apply{\lambda}{q}.
\end{equation}
By \cite[Theorem 4.3.13]{ash2014measure}, $\cl M(S)$ is the set of finite signed Borel measures on $S$, so that $\forall \mu \in \cl M(S)$,  $\int_S q \, d\mu \BYDEF \apply{\mu}{q}$.

Note that when the space of measures $\cl M(S)$ is endowed with the norm of total variation (equivalently the operator norm on the linear functional representation \cite[p. 186]{ash2014measure}) it is a Banach space.

Let $\cl M_+(S)$ be its positive cone, i.e., the space of positive Borel measures on $S$.
%
%
%
By Riesz-Haviland Theorem (see \cite[Theorem 2.14]{rudin1987real}), 
for any $\mu\in \cl M(S)$, $\mu \in \Mp(S)$ if and only if  $\forall p \in \cl C_{+}(S), \apply{\mu}{p}\ge 0$.


Let $m$ be a non-zero natural number. For $i=1,\ldots, m$, let $S_i \subset \RR^{n_i}$ be a compact subset of $\RR^{n_i}$ for $n_i \in \mathbb{N}$. Where $\cl C(S_i)$, $\cl M (S_i)$, $\Mp(S_i)$, $C_+(S_i)$ are as above, define $\cl C = \prod_{i=1}^m \cl C(S_i)$, $\cl M = \prod_{i=1}^m \cl M (S_i)$, $\Mp = \prod_{i=1}^m \Mp(S_i)$, $\cl C_+ = \prod_{i=1}^m \cl C_+(S_i)$.


Let $f \in \cl C$ and $J$ be a countable index set. For $j \in J$, let $t_j \in \RR$ and $h_j \in \cl C$. Set $t = (t_j)_{j \in J}$, $h = (h_j)_{j \in J} = (h_{i,j})_{1 \le i\le m, j \in J} \in \cl C^{J}$ and $K\subset \RR_+^J$ a closed convex cone in the positive orthant of $\RR^J$, the space of real sequences indexed by $J$ \brev endowed with the product topology\erev. We equip $\cl C$ and $\cl M$ with the product topologies: sequences in the former sets converge if and only if each component converges independently.
We consider the \emph{$m$-dimensional Generalized Moment Problem} ($m$-GMP):
For $f \in \cl C$, $h_j \in \cl C$ for all $j \in J$,
\begin{equation}\label{eq:gmp_obj}
        \begin{split}
            \pop^* \BYDEF\ \inf\ \ &\apply{\mu}{f}\\
            \textnormal{s.t.} \ \
            &t -\apply{\mu}{h} \in K\\
            &\mu \in \mathcal{M}_+,
        \end{split}
\end{equation}
where 
$$
\apply{\mu}{f} \BYDEF \sum_{i=1}^m\apply{\mu_i}{f_i}, \ \ \ \text{and} \ \ \ 
\apply{\mu}{h} \BYDEF \left(\sum_{i=1}^m\apply{\mu_i}{h_{i,j}}\right)_{j\in J}
$$
The feasible set of $\pop^*$ has the form
$$
\Omega \BYDEF \left\{  \mu \in \Mp \mid t - \apply{\mu}{h} \in K \right\}.
$$
We consider also the \emph{$m$-dimensional Dual Generalized Moment Problem} ($m$-DGMP):
\begin{equation}\label{eq:gmp_dual_obj}
        \begin{split}
            \dop^* = \ \textnormal{sup} \ \ & \mult{t}{v} \\
            \textnormal{s.t.} \ \ &f - \mult{h}{v} \in \Cp, \\
            &v \in - K^{*}.
        \end{split}
\end{equation}
where $\mult{h}{v}=\sum_{j \in J} h_j\, v_j$ and $K^*$ is the dual cone of $K$, defined as $K^{*}=\{v \in \RR_0^{J}: \forall u\in K, \mult{u}{v} \ge 0\}$, with $\RR_0^J$ being the space of sequences of finite support.

\begin{example}
    When $m=1$, $N=1$, $h=1\in \cl C(S)$, $t=1 \in \RR$ and $K=\{0\}$, we recover the  classical polynomial optimization problem 
\begin{equation} \label{eq:opt}
\begin{array}{rlrl}
    f_{\min}\ =\ \inf_{x\in S} f(x)\  = \ \inf\ \   & \apply{\mu}{f}  & = \ \sup\  & v\\
             \ \textnormal{s.t.} \ & 1-\apply{\mu}{1} = 0  & \ \textnormal{s.t.} \  
            & f - v\in \cl C_+(S)  \\
            &\mu \in \mathcal{M}_+ & & v \in \RR.
\end{array}
\end{equation}
\end{example}

    The following standard assumption (see e.g. \cite[Lemma 1]{lasserre2008semi}), which ensures the existence of at least one measure satisfying all moment constraints \emph{holds throughout this paper}.
    \begin{assumption}\label{as:non_empty_primal}
    The problem \eqref{eq:gmp_obj} admits a feasible solution.
    \end{assumption}

\subsection{Slater's condition and compactness} 
\label{sec:gmp_properties}

To guarantee the existence of solutions to the Generalized Moment Problem, we make the following assumption:

\begin{assumption}[$S$-fullness]\label{as:x-full}
The $S$-fullness condition is said to be satisfied for problem \eqref{eq:gmp_obj} when there exists $b = \mult{h}{w}$, $w \in K^*$, such that $b_i>0$ on $S_i$ for $i=1, \ldots m$.
\end{assumption}
Note that $S$-fullness holds trivially in the case $\Omega$ describes a product of probability measures: order the matrix $h$ so that $(h_{i,j})_{1 \le i, j \le m} = \mathrm{Id}$ and set $w = (\overbrace{1,\ldots,1}^{m}, 0, 0 \ldots)$. Then, 
$b=(\overbrace{1,\ldots,1}^{m})$ and $b_i = 1 > 0$ for all $i=1,\ldots,m$.

Observe that the problem being $S$-full implies that $\Omega$ is bounded: 
\begin{lemma}\label{lem:compactness_p_star}
    Under \Cref{as:x-full}, the set $\Omega$ is weak$^*$ compact in the product topology. Furthermore, the value of $\pop^*$ is attained.
\end{lemma}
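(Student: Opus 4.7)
The plan is to show $\Omega$ is weak$^*$ compact in the product topology by realizing it as a weak$^*$-closed subset of a Tychonoff product of Banach--Alaoglu balls, and then invoke Weierstrass for attainment. The $S$-fullness hypothesis is the only tool available to produce an a priori mass bound on each component $\mu_i$, so that is the natural starting point.

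First, I would use $S$-fullness to bound each component's total variation. Fix $w \in K^{*}$ with $b = \mult{h}{w}$ satisfying $b_i > 0$ on the compact set $S_i$, and set $c_i \BYDEF \min_{S_i} b_i > 0$. For any $\mu \in \Omega$, the constraint $t - \apply{\mu}{h} \in K \subset \RR_{+}^{J}$ paired with $w \in K^{*}$ gives
$$
\mult{w}{t} \;\ge\; \mult{w}{\apply{\mu}{h}} \;=\; \apply{\mu}{\mult{h}{w}} \;=\; \sum_{i=1}^m \apply{\mu_i}{b_i} \;\ge\; \sum_{i=1}^m c_i\,\apply{\mu_i}{1},
$$
so $\apply{\mu_i}{1}$, which equals the total variation norm for $\mu_i \in \Mp(S_i)$, is bounded uniformly by some $M_i = \mult{w}{t}/c_i$.

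Next, I would assemble the compactness. Each closed ball $B_i = \{\nu \in \cl M(S_i) : \|\nu\|_{\infty} \le M_i\}$ is weak$^*$ compact by Banach--Alaoglu, and the finite product $\prod_{i=1}^m B_i$ is compact in the product topology by Tychonoff. The cone $\Mp(S_i)$ is weak$^*$-closed, since Riesz--Haviland writes it as the intersection of the closed half-spaces $\{\nu : \apply{\nu}{p} \ge 0\}$ over $p \in \Cp(S_i)$, so the truncated positive cone $\prod_i (B_i \cap \Mp(S_i))$ remains compact and contains $\Omega$. To close the argument it suffices to show that $\Omega$ is closed in this ambient space: for each $j \in J$, the map $\mu \mapsto \sum_i \apply{\mu_i}{h_{i,j}}$ is weak$^*$-continuous, so the assembled map $\mu \mapsto t - \apply{\mu}{h}$ is continuous into $\RR^{J}$ with the product topology; since $K$ is closed in that topology, its preimage (intersected with $\Mp$) is closed. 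Compactness follows, and as $\mu \mapsto \apply{\mu}{f} = \sum_i \apply{\mu_i}{f_i}$ is likewise weak$^*$-continuous, Weierstrass combined with Assumption \ref{as:non_empty_primal} (ensuring $\Omega \neq \emptyset$) yields the attainment of $\pop^{*}$.

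The main obstacle I would expect is the bookkeeping around the countable product topology on $\RR^{J}$: one must check that coordinatewise continuity of $\mu \mapsto t - \apply{\mu}{h}$ truly gives continuity into $\RR^{J}$ equipped with the product topology, and hence that closedness of $K$ transfers to closedness of the preimage. Everything else reduces to standard weak$^*$ compactness, and the $S$-fullness bound is exactly what converts the abstract cone constraint into a concrete mass bound compatible with Banach--Alaoglu.
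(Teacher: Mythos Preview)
Your proposal is correct and follows essentially the same approach as the paper: use $S$-fullness together with $w\in K^{*}$ to derive a uniform mass bound on each $\mu_i$, invoke Banach--Alaoglu and Tychonoff to get a compact ambient product of balls, verify that $\Omega$ is weak$^{*}$-closed via the closedness of $K$ in $\RR^{J}$ and of $\Mp$, and conclude attainment by the extreme value theorem. The only cosmetic difference is that the paper takes a single constant $b_{\min}=\min_{i}\min_{S_i}b_i$ rather than your per-component $c_i$, but the arguments are otherwise identical.
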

\begin{proof}
See \Cref{app:lp_proofs}.
\end{proof}

We consider an interior point criterion for strong duality between problems $\pop^*$ and $\dop^*$: 

\begin{condition}[Slater's Condition]\label{con:slaters}
    Let $X$, $Z$ be Banach spaces, and $Y = X^*$ and $V = Z^*$ their respective duals.
    Let $A : X \rightarrow Z$ be a continuous linear operator. Define $P$ as a 
    positive cone in $X$, and $Q$ a 
    positive cone in $Z$. Let $b \in Z$ and $c \in Y$. A problem of the form
    \begin{equation}\label{eq:anderson_IP}
        \begin{split}
            \textnormal{inf} \ \ &\langle x, c \rangle \\
            \textnormal{s.t.} \ &Ax - b \in Q, \\
            &x \in P,
        \end{split}
    \end{equation}
    is said to satisfy Slater's Condition when it has a finite infimum, and $\exists \ x_0 \in P$ such that $Ax_0 - b$ is in the interior of $Q$.
\end{condition}
    The importance of Slater's Condition lies in its establishment of strong duality between problem \eqref{eq:anderson_IP} and its dual, given by:
    \begin{equation}\label{eq:anderson_IP*}
        \begin{split}
            \textnormal{sup} \ \ &\langle b, w \rangle \\
            \textnormal{s.t.} \ & c -A^*w  \in P^*, \\
            &w \in Q^*.
        \end{split}
    \end{equation}

\begin{theorem}[{\cite[Theorem 10]{anderson1983review}}]
    If Slater's Condition is satisfied for \eqref{eq:anderson_IP}, then strong duality holds between the problems \eqref{eq:anderson_IP} and \eqref{eq:anderson_IP*}.
\end{theorem}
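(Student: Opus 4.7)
The plan is to establish strong duality through a perturbation (value-function) argument, which in turn reduces to a Hahn-Banach subdifferential calculation. First I would verify weak duality directly: for any primal-feasible $x$ and dual-feasible $w$, one has
$$
\langle x, c \rangle - \langle b, w \rangle = \langle x, c - A^*w \rangle + \langle Ax - b, w \rangle \geq 0,
$$
since both terms are non-negative pairings of cones with their duals ($x \in P$ against $c - A^*w \in P^*$, and $Ax - b \in Q$ against $w \in Q^*$). This yields $\pop^* \geq \dop^*$, and the finiteness hypothesis of Slater's condition makes $\pop^*$ a finite real number.

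For the reverse inequality, I would introduce the perturbation function $\phi : Z \to \RR \cup \{+\infty\}$ defined by
$$
\phi(z) = \inf \{\langle x, c \rangle : x \in P,\ Ax - b - z \in Q\},
$$
so that $\phi(0) = \pop^*$. Convex combinations of feasible pairs show $\phi$ is convex. Slater's condition then controls it locally: if $Ax_0 - b \in \mathrm{int}(Q)$, for every $z$ in an open ball around $0$ one still has $Ax_0 - b - z \in Q$, so $\phi(z) \leq \langle x_0, c \rangle$ on that ball. A convex function majorised by a constant on a neighbourhood of a point of a Banach space is continuous there, hence $\partial \phi(0)$ is a non-empty subset of $Z^* = V$; continuity, not merely convexity, is what ensures subgradients are topological and not just algebraic.

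Fix any $w \in \partial \phi(0)$. Substituting $z = Ax - b - q$ with $x \in P$ and $q \in Q$ into the subgradient inequality $\phi(z) \geq \pop^* + \langle z, w \rangle$, and using $\phi(z) \leq \langle x, c \rangle$ because such $x$ is feasible for $\phi(z)$, rearrangement yields
$$
\langle x, c - A^*w \rangle + \langle q, w \rangle \geq \pop^* - \langle b, w \rangle \quad \text{for all } x \in P,\ q \in Q.
$$
Scaling $q$ by $\lambda \to +\infty$ with $x = 0$ forces $\langle q, w \rangle \geq 0$ for every $q \in Q$, i.e., $w \in Q^*$; the analogous scaling in $x$ with $q = 0$ yields $c - A^*w \in P^*$, so $w$ is dual-feasible. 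Setting $x = 0$ and $q = 0$ gives $\langle b, w \rangle \geq \pop^*$, which combined with weak duality collapses to $\langle b, w \rangle = \pop^* = \dop^*$, so strong duality holds and is attained.

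The main obstacle I anticipate is the infinite-dimensional core: ensuring $\partial \phi(0) \subset V$ (rather than the larger algebraic dual of $Z$), and handling the ``scale to infinity'' cone-membership deductions when the cones are unbounded. Both hinge crucially on the Banach-space hypothesis and on the \emph{topological} interior in Slater's condition; without Slater, $\phi$ can fail to be continuous at $0$, the subdifferential can be empty, and the whole construction collapses.
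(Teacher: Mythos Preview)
Your argument is correct and is precisely the standard perturbation/value-function proof of this strong-duality result. Note, however, that the paper does not supply its own proof of this theorem: it is quoted as \cite[Theorem 10]{anderson1983review} and used as a black box. So there is no ``paper's proof'' to compare against; what you have written is essentially the proof one would find in Anderson's original, or in any standard treatment of conic duality (e.g., Ekeland--Temam, Bonnans--Shapiro). Your closing remarks about the necessity of the topological (norm) interior are on point and align with the paper's own Remark following the theorem, which explains why the Mackey-interior hypothesis of the original statement reduces to the norm interior in the Banach-space setting used here.
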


\begin{remark}
  In \cite[Theorem 10]{anderson1983review}, the hypothesis requires an element from the Mackey interior. Since we work in Banach spaces, which are locally convex, the Mackey topology coincides with the norm topology on convex sets \cite[Proposition 4, Chapter IV]{bourbaki2013topological}. Therefore, we only consider points in the interiors with respect to the norm induced by our Banach spaces.  
\end{remark}
\begin{proposition}\label{prop:slaters_d_star}
    Under Assumption \ref{as:x-full}, strong duality holds between $\dop^*$ and $\pop^*$.
\end{proposition}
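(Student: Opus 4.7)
The plan is to apply Anderson's theorem (\cite[Theorem 10]{anderson1983review}) by treating $\dop^*$ as the Anderson primal (with its sign flipped to an infimum), so that its Anderson dual recovers exactly $\pop^*$, and then to verify Slater's condition on this primal using an interior-point certificate coming directly from the $S$-fullness hypothesis.

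Concretely, I would cast $-\dop^* = \inf\,\mult{-t}{v}$ subject to $f - \mult{h}{v} \in \cl C_+$ and $v \in -K^*$ in Anderson's form, with positive cones $P = -K^*$ and $Q = \cl C_+$, operator $A: v \mapsto -\mult{h}{v}$, right-hand side $b = -f$, and constraint Banach space $Z = \cl C$ (with the product sup-norm). To identify the Anderson dual, I would use that $\cl C_+^* = \Mp$ (Riesz--Haviland) and $(-K^*)^* = -K$ (bipolar theorem for the closed convex cone $K$); the adjoint $A^*$ then acts as $\mu \mapsto -\apply{\mu}{h}$, so the dual problem becomes $\sup -\apply{\mu}{f}$ subject to $t - \apply{\mu}{h} \in K$ and $\mu \in \Mp$, i.e.\ $-\pop^*$. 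Anderson's strong duality therefore amounts to the desired identity $\pop^* = \dop^*$.

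For Slater's condition, I would take $w \in K^*$ as supplied by \Cref{as:x-full}, so that $b := \mult{h}{w}$ has each component $b_i$ continuous and strictly positive on the compact set $S_i$; continuity and compactness give $\varepsilon_i := \min_{S_i} b_i > 0$. For $M > 0$ the candidate $v_0 := -M w$ lies in $-K^*$ and satisfies
\begin{equation*}
    f - \mult{h}{v_0} \ =\ f + M b,
\end{equation*}
whose $i$-th component is bounded below on $S_i$ by $M\varepsilon_i - \|f_i\|_\infty$, which is strictly positive for $M$ large; hence $f + Mb$ lies in the sup-norm interior of $\cl C_+$, furnishing the required Slater point. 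Finiteness of the Anderson primal infimum is inherited from weak duality together with \Cref{lem:compactness_p_star}, which gives $\pop^* < \infty$ and therefore $-\dop^* \ge -\pop^* > -\infty$.

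The main obstacle is the choice of an ambient Banach space $X$ housing the dual variable $v$: since $v$ only needs to lie in $\RR_0^J$ with $J$ countable, one must embed $-K^*$ into a suitable Banach space (e.g.\ $c_0(J)$ or $\ell^\infty(J)$) that is compatible with the product topology of $\RR^J$ and in which $v \mapsto \mult{t}{v}$ and $v \mapsto \mult{h}{v}$ are continuous linear maps; this typically requires mild boundedness of $t$ and of the sequences $(h_{i,j})_j$. Once this ambient space is fixed consistently, Anderson's theorem applies verbatim and strong duality follows.
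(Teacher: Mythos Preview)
Your proposal is correct and follows essentially the same approach as the paper: recast $-\dop^*$ as Anderson's primal, identify its dual with $-\pop^*$ via the bipolar theorem and the Riesz representation theorem (note: the dual cone identity $\cl C_+^* = \Mp$ is Riesz representation, not Riesz--Haviland), then verify Slater's condition by scaling the $S$-fullness witness $w$ so that $f + M\,\mult{h}{w}$ lies in the norm-interior of $\cl C_+$, with finiteness coming from weak duality and \Cref{lem:compactness_p_star}. The only cosmetic difference is that the paper first substitutes $v \mapsto -v$ to work with the cone $K^*$ rather than $-K^*$; your concern about the ambient Banach space for $v$ is legitimate, but the paper itself leaves this point implicit.
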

\begin{proof}
See \Cref{app:lp_proofs}.
\end{proof}

While strong duality $\pop^* = \dop^*$ and attainment of $\pop^*$ hold under \Cref{as:x-full}, for the subsequent analysis of convergence rates, we require also that the dual optimum is attained:
\begin{assumption}\label{as:dual_optimizer}
The problem \eqref{eq:gmp_dual_obj} attains its supremum at $v^*$.
\end{assumption}
The satisfaction of this assumption depends on the specific structure of the GMP considered. It holds naturally for some GMPs such as in the polynomial optimization setting, and in the symmetric tensor decomposition setting (see \Cref{prop:attain} and \Cref{prop:sup_attainment_reals}). For the so-called $\cl A$-truncated $S$-moment problem, a condition of strict $S$-positivity is used to show the existence of a maximizer \cite[Proposition 3.6]{Nie2014}. In the optimal control setting, \Cref{as:dual_optimizer} is guaranteed by controllability conditions \cite[Equation (3.7)]{gaitsgory2012approximate}, \cite[Proposition 8(ii)]{gaitsgory2006linear}).

%

\section{Moment-SoS relaxations of the Generalized Moment Problem}\label{sec:moment-SOS}
In the following sections, the usage of $i$ refers to an arbitrary element from the sequence $\{1,\ldots,m\}$, where $m$ is the dimension of the generalized moment problem. \\
Let $\rng_i$ be the ring of polynomials in $n_i$ variables. We view $\rng_i$ as a subspace of $\mathcal{C}(S_i)$, the space of continuous functions on the compact set $S_i$. By the Stone-Weierstrass theorem, $\rng_i$ is a dense subspace of $\mathcal{C}(S_i)$ with respect to the sup-norm $\| \cdot \|_\infty$.\\
Any measure $\mu_i \in \mathcal{M}(S_i)$ defines a linear functional $L_{\mu_i}$ on $\rng_i$ via integration:
$$L_{\mu_i}(p) = \int_{S_i} p(x) \, d\mu_i(x) \quad \text{for all } p \in \rng_i.$$
The functional $L_{\mu_i}$ is an element of $\mathcal{D}_i \BYDEF \rng_i^*$, the algebraic dual of $\rng_i$. We equip $\mathcal{D}_i$ with the weak* topology, where a sequence of functionals converges if it converges pointwise for every polynomial in $\rng_i$.
The Riesz-Haviland theorem gives a necessary and sufficient condition for a linear functional $L \in \mathcal{D}_i$ to be representable by a positive measure on $S_i$. Specifically, such a representing measure $\mu_i \in \mathcal{M}_+(S_i)$ exists if and only if $L(p) \geq 0$ for all polynomials $p \in \rng_i$ that are non-negative on $S_i$. This allows us to work directly with linear functionals on polynomials instead of measures.

Hereafter, we assume that the compact sets $S_i$ are defined as
\begin{equation}\label{eq:def Si}
    S_i = \{x \in \RR^{n_i}, g_{i,1}(x) \ge 0, \ldots, g_{i,r_i}(x)\ge 0\}
\end{equation}
for some polynomial vectors $g_i = (g_{i,1}, \ldots, g_{i,r_i})\in \rng_i^{r_i}$, where $\rng_i^{r_i}$ is the $r_i$-fold product space.
We denote by $\pos(S_i)$ the convex cone of polynomials $p\in \rng_i$ such that $\forall x\in S_i$, $p(x) \ge 0$. 
Hereafter, we consider $S_i$ to be a set of the form $S_i = g_i^{-1}(\RR^{r_i}_+)$ with $g_i \in \rng_i^{r_i}$.
Let  $\Sigma^{2}_{i}= \{ q_1^2+\dots+q_s^2 \mid s \in \NN, \ q_k \in \rng_{i} \, \}\subset \rng_{i}$ be the convex cone of Sums of Squares (SoS).
The \emph{quadratic module} generated by the polynomials 
$g_i \in \rng_i^{r_i}$ is 
$$
\cl Q(g_i) \BYDEF \Sigma_{i}^2+ \sum_{j=1}^{r_i}\Sigma_{i}^2\cdot g_{i,j}.
$$
Hereafter, we will make the following assumption: 
\begin{assumption}[Archimedean quadratic module]\label{as:archimedean}
    For all $i=1, \ldots, m$, the quadratic module $\cl Q(g_i)$ contains a polynomial of the form $R_i^2 - (X_1^2 +\cdots+ X_{n_i}^2)$, where $R_i \in \RR_{+}$. 
\end{assumption}
This assumption is easy to satisfy when $S_i$ is bounded, that is, included in a ball of some radius $R_i$ centered at the origin, since we can add the sign constraint $R_i^2 - (X_1^2 +\cdots+ X_{n_i}^2)\ge 0$ to $g_i$ without changing $S_i$.

We define the dual cones:
    $\cl L(g_i)= \cl Q(g_i)^* = \{\lambda \in \cl D_i \mid \forall q \in \rng_i,\  \apply{\lambda}{g_{i,j} q^2} \ge 0, \ \apply{\lambda}{q^2} \ge 0, \ \forall j = 1,\ldots, r_i\}$
and $\cl L=\prod_{i=1}^m \cl L(g_i), 
\cl Q=\prod_{i=1}^m \cl Q(g_i).$ Define also the product spaces $\D \BYDEF \prod_{i=1}^m \D_i$ and $\rng \BYDEF \prod_{i=1}^m \rng_i$.

Under \Cref{as:archimedean}, 
it is readily verified by Putinar's Positivstellensatz that $\cl L(g_i) = \cl M_{+}(S_i)$.
Therefore, given a cost function $f\in \rng$ and $h_{i,j}\in \rng_i$ for $j \in J$, the primal relaxation of \eqref{eq:gmp_obj} translates as 
\begin{equation}\label{eq:relax_obj}
        \begin{split}
            {\pop}^* = \ \  \inf\ \ &\apply{\lambda}{f} \\
            \ \ \ \ \textnormal{s.t.} \ \ &t - \apply{\lambda}{h} \in K, \\
            &\lambda \in \cl L,
        \end{split}
\end{equation}
whose feasible set is denoted
\begin{equation}\label{eq:relax_con}
    \O = \{ \lambda \in \cl L  \mid \ t-\apply{\lambda}{h} \in K \}.
\end{equation}
We consider also the dual SoS relaxation
\begin{equation}
        \begin{split}
            {\dop}^* = \ \  \sup\ \ &\mult{v}{t} \\
            \ \ \ \ \textnormal{s.t.} \ \ &f - \mult{h}{v} \in \cl Q  \\
            &v \in - K^*
        \end{split}
\end{equation}
whose feasible set is 
\begin{equation}\label{eq:relax_dual_con}
E = \left\{v \in -K^* : f - \mult{h}{v} \in \cl Q \right\}.
\end{equation}
It is readily verified under \Cref{as:archimedean} that $\pos = \overline{\cl Q}$, where the closure of $\cl Q$ is taken in the weak$^*$ topology 
($p_n \xrightarrow{w} p$ in $\rng$ iff $\forall \lambda \in \D$, $\apply{\lambda}{p_n} \to \apply{\lambda}{p}$, which is equivalent to point-wise convergence). We deduce from the previous section the following result:
\begin{lemma}\label{lem:compactness_relaxation}
    Under \Cref{as:x-full} and \Cref{as:archimedean}, $\O$ is weak$^*$ compact and $\pop^*=\dop^*$. Furthermore, the value of ${\pop}^*$ is attained.
\end{lemma}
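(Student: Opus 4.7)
The plan is to transfer the conclusions of \Cref{lem:compactness_p_star} (compactness and primal attainment) and \Cref{prop:slaters_d_star} (strong duality) from the original GMP to its full relaxation by using the Archimedean condition to identify $\cl L$ with $\Mp$.

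First, I would invoke the remark immediately preceding the lemma: under \Cref{as:archimedean}, Putinar's Positivstellensatz gives $\pos(S_i)=\overline{\cl Q(g_i)}$ in the weak$^*$ topology of $\rng_i$, so taking dual cones yields $\cl L(g_i)=\cl Q(g_i)^{*}=\pos(S_i)^{*}$, and the Riesz--Haviland theorem identifies $\pos(S_i)^{*}$ with $\Mp(S_i)$ via the integration map $\mu\mapsto L_\mu$. The resulting bijection $\Phi:\Mp\to\cl L$ is weak$^*$-to-weak$^*$ continuous because each polynomial is continuous on the compact $S_i$, so pointwise convergence of measures against continuous test functions a fortiori implies pointwise convergence against polynomials. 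Under $\Phi$, one has $\O=\Phi(\Omega)$ and $\apply{L_\mu}{f}=\apply{\mu}{f}$, so the relaxed primal and the original GMP primal share the same value and matching optimizers.

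With this identification, compactness and primal attainment are immediate: by \Cref{lem:compactness_p_star}, $\Omega$ is weak$^*$ compact in $\cl M$ and the infimum is attained, so $\O=\Phi(\Omega)$ is weak$^*$ compact in $\D$ as a continuous image of a compact set, and the relaxed infimum is attained. For the equality $\pop^{*}=\dop^{*}$, it suffices to show that the relaxed dual value equals the original dual value; the inequality $\le$ is immediate from $\cl Q\subseteq\pos$. For $\ge$, I would take any $v$ feasible for the original dual and, using the $S$-fullness witness $b=\mult{h}{w}$ with $w\in K^{*}$ and each $b_i>0$ on $S_i$, set $v'=v-\delta w$ for $\delta>0$. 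Then $v'\in -K^{*}$ and $f-\mult{h}{v'}=(f-\mult{h}{v})+\delta\, b$ is strictly positive on each $S_i$, hence $f-\mult{h}{v'}\in\cl Q$ by Putinar's Positivstellensatz. Thus $v'$ is relaxed-dual feasible, with value $\mult{v'}{t}=\mult{v}{t}-\delta\mult{w}{t}$; since $w\in\RR_0^{J}$ has finite support, $\mult{w}{t}$ is finite and this value tends to $\mult{v}{t}$ as $\delta\downarrow 0$. Taking $v$ with $\mult{v}{t}$ arbitrarily close to the original dual optimum, which equals $\pop^{*}$ by \Cref{prop:slaters_d_star}, closes the chain of equalities and yields strong duality for the relaxation.

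The main obstacle is this strong-duality step: Anderson's abstract Slater framework (\Cref{con:slaters}) is stated for Banach-space conic programs, whereas the relaxed primal variable lives in $\D=\rng^{*}$, a space without a canonical Banach structure, so the strong-duality theorem cannot be applied directly to the relaxation. The $S$-fullness perturbation trick above circumvents this obstacle by promoting original-dual feasibility to relaxed-dual feasibility through the effective positivity content of Putinar's theorem.
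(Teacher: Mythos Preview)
Your proposal is correct and follows the paper's route: identify $\cl L$ with $\Mp$ under the Archimedean assumption and then invoke \Cref{lem:compactness_p_star} and \Cref{prop:slaters_d_star}. The paper itself offers no argument beyond the sentence ``We deduce from the previous section the following result,'' so your treatment is in fact more complete. In particular, you correctly flag that the relaxed dual constraint $f-\mult{h}{v}\in\cl Q$ is strictly stronger than $f-\mult{h}{v}\in\Cp$, so \Cref{prop:slaters_d_star} does not literally deliver equality of the relaxed pair; your $S$-fullness perturbation $v'=v-\delta w$ combined with Putinar's theorem cleanly shows that the relaxed and original dual values coincide. The paper appears to leave this implicit (perhaps intending the remark $\pos=\overline{\cl Q}$ just before the lemma to carry the weight), but your argument makes it rigorous---and it is exactly the perturbation mechanism the paper later uses in the proof of \Cref{thm:gmp_convergence}.
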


\subsection{Convergence of the optima}\label{sec:convergence_results}
We now introduce the Moment-SoS hierarchies, which consist of a sequence of convex optimization problems indexed by degree $\ell \in \NN$.  
For a vector $f=[f_1,\ldots, f_m] \in \rng$, let $\deg(f) = \max_{i=1,\ldots,m} \deg(f_i)$.
For $\ell \in \NN$,  let  $\Sigma^{2}_{i,\ell}= \{ \, q_1^2+\dots+q_s^2 \mid s \in \NN, \ q_k \in \rng_{i,\lfloor \ell/2 \rfloor} \, \}\subset \rng_{i,\ell}$ be the convex finite-dimensional cone of Sums of Squares (SoS) of degree $\le \ell$.
The \emph{truncated quadratic module} generated by the polynomials 
$g_i \in \rng_i^{r_i}$ in degree $\le \ell$ is 
$$
\cl Q_{\ell}(g_i) \BYDEF \Sigma_{i,\ell}^2+ \sum_{j=1}^{r_i}\Sigma_{i,\ell-\deg(g_{i,j})}^2\cdot g_{i,j}.
$$
Let $\cl Q_\ell = \prod_{i=1}^m \cl Q_\ell(g_i)$, 
 $\cl L_\ell(g_i)= \cl Q_\ell(g_i)^*= \{\lambda \in \cl D_i \mid \forall q \in \cl Q_\ell(g_i), \apply{\lambda}{q}\ge 0\}$, 
and $\cl L_\ell = \prod_{i=1}^m \cl L_\ell(g_i)$. 

Henceforth, we specify $f$ and $h_{j}$ to be elements of $\rng$. 
Define 
$$
J_{\ell} = \{ j \in J \mid \textup{ for all } i \in \{1, \ldots, m\}, \ \deg(h_{i,j}) \le \ell \}
$$
and let $h^\ell \BYDEF (h_{i,j})_{1 \le i \le m, j \in J_{\ell}}$, $t^\ell \BYDEF (t_{j})_{j \in J_{\ell}}$ and $K^\ell \BYDEF \{ u^\ell \in \mathbb{R}^{J_\ell} \mid \exists \ u \in K$ such that $u_j = u^\ell_j \text{ for all } j \in J_\ell \}$. 
We then define the corresponding dual cone $(K^\ell)^* \subseteq \mathbb{R}_0^{J_\ell}$ as
$$
(K^\ell)^* = \{ v^\ell \in \mathbb{R}_0^{J_\ell} \mid \forall \  u^\ell \in K^\ell, \sum_{j \in J_\ell} u^\ell_j v^\ell_j \ge 0 \}.
$$
The cone $K^\ell$ is the projection of the original cone $K \subseteq \mathbb{R}_+^J$ onto the coordinates indexed by $J_\ell$. 
\noindent We consider the following hierarchy of truncated primal relaxations
\begin{equation}
        \begin{split}
            {\pop}_{\ell}^* = \ \  \inf\ \ &\apply{\lambda}{f} \\
            \ \ \ \ \textnormal{s.t.} \ \ &t^\ell - \apply{\lambda}{h^\ell} \in K^\ell, \\
            &\lambda \in \cl L_{\ell},
        \end{split}
\end{equation}
along with the corresponding dual $\ell$-degree SoS relaxations:
\begin{equation}\label{eq:relax_dual_obj}
        \begin{split}
            {\dop}_{\ell}^* = \ \  \sup\ \ &\mult{v}{t^\ell} \\
            \ \ \ \ \textnormal{s.t.} \ \ &f - \mult{h^\ell}{v} \in \cl Q_{\ell}, \\
            &v \in - (K^\ell)^*.
        \end{split}
\end{equation}

It is readily verified that weak duality holds: $\dop^*_{\ell} \le \pop_\ell^*$ and that $\dop^*_{\ell}\le \dop_{\ell+1}^* \le \dop^*$, $\pop_{\ell}^*\le \pop_{\ell+1}^* \le \pop^*$.

To analyze the convergence of the hierarchy, we will use the following Positivstellens\"atze, which hold under \Cref{as:archimedean}:
\begin{theorem}[{Effective Positivstellens\"atze}]\label{thm:putinar_effective}
For $n \ge 2$, $g \subset \rng$ finite such that $\cl Q(g)$ satisfies Assumption \ref{as:archimedean}, $S=\{x \in \RR^{n}\mid \forall p\in g, \ p(x)\ge 0\}$ and $p$ strictly positive on $S$, one has that $p \in \cl Q_\ell(g)$ for

$$
\ell \ge \max \{ \gamma \left(\frac{p_{\max}}{p_{\min}}\right)^{\frac 1 \theta}, \ell_0\}.
$$
where $\theta>0$, and $\gamma$, $\ell_0$ are constants depending on $n$, $g$ and $\deg(p)$, and $p_{\min} \BYDEF \min_{x \in S} p(x)$, $p_{\max} \BYDEF \max_{x \in S} p(x)$. We have 
\begin{itemize}
    \item $\theta = \frac 1 {2.5n\lo}$, $\gamma=\gamma'(n,g) \deg(p)^{-3.5n\lo}$ and $\ell_0=0$, where $\gamma'(n,g)$ is a constant depending on $n$ and $g$, $\lo$ is the exponent in the  Łojasiewicz  inequality between the semi-algebraic distance defined from the polynomials $g$ and the Euclidean distance to $S$ (see \cite[Theorem 1.7]{baldi2023effective}), 
    \item $\theta=1$,  $\gamma$ depends polynomially on $deg(p)$ for $n$ fixed and $\ell_0 = \pi \, n\, \sqrt{2n} \deg(p)$,  when $S$ is the box $S=[-\rho, \rho]^{n}$ and  $g = \{ \rho^2-x_i^2,   i= 1, \ldots, n\}$ (see \cite[Theorem 11]{BaldiSlot2024}),
    \item $\theta = 2$,  $\gamma$ depends polynomially on $deg(p)$ for $n$ fixed and $\ell_0 = \pi \, n\, \sqrt{2n} \deg(p)$,  when $S$ is the box $S=[-1, 1]^{n}$ and $g = \{ \prod_{i=1}^{n} (1-x_i^2)^{e_i}, \textup{ for }  e =[e_1, \ldots, e_n] \in \{0,1\}^{n}\} $ (see \cite[Corollary 3]{laurent2023effective}),
    \item $\theta = 2$, $\gamma$ depends polynomially on $deg(p)$ for $n$ fixed and $\ell_0= 2 n \deg(p)^{\frac 3 2}$, when $S= \{x \in \RR^n\mid 1-\norm{x}^2 \ge 0\}$ is the unit ball 
(see \cite[Theorem 3]{Slot2022}).
\end{itemize}
\end{theorem}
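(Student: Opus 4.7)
Since the statement is an aggregation of four independent results drawn from the recent literature, each tailored to a distinct family of semi-algebraic sets, my plan is simply to extract each bound from its source and verify that it fits the uniform template $\ell \ge \max\{\gamma\,(p_{\max}/p_{\min})^{1/\theta}, \ell_0\}$. There is no new mathematics to do beyond translating conventions.

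For the first bullet, I would invoke \cite[Theorem 1.7]{baldi2023effective}, which gives a Putinar-type representation $p \in \cl Q_\ell(g)$ as soon as $\ell$ exceeds a quantity of the order $\gamma'(n,g)\,\deg(p)^{3.5n\lo}\,(p_{\max}/p_{\min})^{2.5n\lo}$, with the constant $\gamma'(n,g)$ depending on $n$ and on the Łojasiewicz exponent $\lo$ of the semi-algebraic distance relative to the Euclidean distance on $S$. Rearranging so that the $(p_{\max}/p_{\min})$ factor is brought to the first power inside a $1/\theta$ exponent yields $\theta = 1/(2.5 n \lo)$ and $\gamma = \gamma'(n,g)\,\deg(p)^{-3.5 n \lo}$; the absence of an additive $\ell_0$ reflects that the estimate is valid from $\ell = 0$. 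For the two hypercube cases, I would quote \cite[Theorem 11]{BaldiSlot2024} for the description $g = \{\rho^2 - x_i^2\}$ (yielding $\theta = 1$ and the threshold $\ell_0 = \pi n\sqrt{2n}\,\deg(p)$) and \cite[Corollary 3]{laurent2023effective} for the Hausdorff-weighted description $g = \{\prod_i (1-x_i^2)^{e_i}\}$ (yielding $\theta = 2$). For the unit ball, I would cite \cite[Theorem 3]{Slot2022}, which gives $\theta = 2$ together with the threshold $\ell_0 = 2 n \deg(p)^{3/2}$.

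The only real obstacle is notational bookkeeping: each cited paper uses slightly different conventions for the truncated quadratic module, for normalization of $p$, and for the way $p_{\min}$ and $p_{\max}$ enter the bound. Concretely I would need to (i) check that the truncation convention $\Sigma_{i,\ell-\deg(g_{i,j})}^2 \cdot g_{i,j}$ used in this paper matches the convention of each source (or differs only up to an additive constant absorbable into $\ell_0$), (ii) rewrite each estimate so that the dependence on $\deg(p)$ is isolated in $\gamma$ and $\ell_0$, and (iii) verify that the hypothesis $n \ge 2$ is compatible with each reference (the ball and box cases are stated for all $n\ge 1$, so no loss occurs). Once these translations are performed, each bullet follows verbatim from the corresponding reference, and the theorem is proved.
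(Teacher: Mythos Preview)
Your proposal is correct and matches the paper's own treatment: the paper does not give a proof of this theorem at all, but states it as a compilation of existing effective Positivstellens\"atze, with each bullet simply citing the corresponding reference (\cite{baldi2023effective}, \cite{BaldiSlot2024}, \cite{laurent2023effective}, \cite{Slot2022}). Your plan of extracting each bound from its source and checking that it fits the uniform template is exactly what is implicitly assumed, and there is nothing further to add.
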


The main result of the section is as follows: 
\begin{theorem}\label{thm:gmp_convergence}
    Let \Cref{as:x-full}, \Cref{as:dual_optimizer} and \Cref{as:archimedean} be satisfied. Let $\dop^*$ be as in \eqref{eq:gmp_dual_obj}  and $\dop_\ell^*$ be as in \eqref{eq:relax_dual_obj}. For 
    \begin{equation}\label{eq:ell_bound}
    \ell \ge \max\left(\max_{i=1, \ldots,m} \ell_{i,0}, \max_{ j\in \textup{supp}(v^*) \cup \textup{supp}(w)}\deg(h_j)\right),
    \end{equation}
    we have
     $$0 \le \dop^* - \dop^*_\ell \le 
    \kappa \, \ell^{-\theta}$$
where

\begin{equation}\label{eq:kappa theta}
\begin{array}{rl}
    \kappa & = 
    \max_{i=1,\ldots,m}\frac{\mult{t}{w}}{(\mult{h_i}{w})_{\min}} {\gamma_i}^{\theta_i}(2 f_{i,\max} + \norm{v^*}_1 h^*_{i,\max} )\\
    \ \theta &= \min_{i=1,\ldots,m} \theta_i
\end{array}
\end{equation}

    with 
    \begin{itemize} 
        \item $f_{i,\max} \BYDEF \max_{x \in S_i}f_i(x)$
        \item $h^*_{i,\max} \BYDEF \max_{j \in \textup{supp}(v^*)} \max_{x \in S_i} |h_{i,j}(x)|$
        \item $\gamma_i$, $\theta_{i}$ as in \Cref{thm:putinar_effective} for $S_i$, 
        \item $ w $ as in Assumption \ref{as:x-full},
        \item $v^*$ the vector achieving the supremum in $\dop^*$.
    \end{itemize}
    
\end{theorem}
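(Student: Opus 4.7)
The strategy is to construct, for each sufficiently large $\ell$, an explicit feasible point of the truncated dual relaxation \eqref{eq:relax_dual_obj} whose objective value approaches $\dop^* = \mult{t}{v^*}$. The natural candidate is a perturbation of the attained dual optimizer $v^*$ in the direction of the $S$-fullness witness $w$, namely $v_\e \BYDEF v^* - \e w$ for a parameter $\e = \e(\ell) > 0$ to be tuned.

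First, verify that the truncation of $v_\e$ to $J_\ell$ lies in $-(K^\ell)^*$: both $-v^* \in K^*$ and $w \in K^*$, so $-v_\e \in K^*$; moreover, $v^*$ and $w$ have finite supports, and \eqref{eq:ell_bound} guarantees $\textup{supp}(v^*)\cup\textup{supp}(w)\subseteq J_\ell$, so the truncation preserves all nonzero entries and $\mult{t^\ell}{v_\e} = \mult{t}{v_\e}$. The crucial identity is
$$
f_i - \sum_j h_{i,j}(v_\e)_j \;=\; \Bigl(f_i - \sum_j h_{i,j} v^*_j\Bigr) + \e\, b_i \;\ge\; \e\,(b_i)_{\min} > 0 \textup{ on } S_i,
$$
where $b_i$ is the $i$-th component of $b = \mult{h}{w}$: dual feasibility of $v^*$ makes the first summand non-negative, while $S$-fullness provides $(b_i)_{\min}>0$. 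Hence $\e w$ restores strict positivity, which is exactly what \Cref{thm:putinar_effective} requires.

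Next, apply the Effective Positivstellensatz componentwise to $p_i \BYDEF f_i - \sum_j h_{i,j}(v_\e)_j$. A crude upper bound gives $p_{i,\max} \le 2 f_{i,\max} + \norm{v^*}_1 h^*_{i,\max}$, valid once $\e(b_i)_{\max} \le f_{i,\max}$, and the estimate $p_{i,\min} \ge \e (b_i)_{\min}$ then yields $p_i \in \cl Q_\ell(g_i)$ whenever
$$
\e \;\ge\; \frac{\gamma_i^{\theta_i}\bigl(2 f_{i,\max} + \norm{v^*}_1 h^*_{i,\max}\bigr)}{(b_i)_{\min}}\,\ell^{-\theta_i}.
$$
Choosing $\e$ to be the maximum over $i$ of these lower bounds makes $v_\e$ feasible for \eqref{eq:relax_dual_obj}, giving $\dop^*_\ell \ge \mult{t}{v_\e} = \dop^* - \e\mult{t}{w}$. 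Rearranging and using $\ell^{-\theta_i} \le \ell^{-\theta}$ with $\theta = \min_i \theta_i$ (valid for $\ell \ge 1$) yields $\dop^* - \dop^*_\ell \le \kappa\,\ell^{-\theta}$ with $\kappa$ as in \eqref{eq:kappa theta}; the required non-negativity of $\mult{t}{w}$ comes from pairing $w\in K^*$ against $t - \mult{\mu}{h} \in K$ for any primal feasible $\mu$, which exists by \Cref{as:non_empty_primal}. The lower bound $0 \le \dop^* - \dop^*_\ell$ is immediate since $\cl Q_\ell \subseteq \pos$.

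The main technical point is the tension in tuning $\e$: it must be small enough for the crude bound on $p_{i,\max}$ to hold uniformly in $\e$, yet large enough that the Positivstellensatz degree threshold is met at order $\ell$. The hypothesis \eqref{eq:ell_bound}, enforcing $\ell \ge \max_i \ell_{i,0}$ and the degree constraints on the supports of $v^*$ and $w$, is precisely what makes both conditions simultaneously compatible with a single choice of $\e = \Theta(\ell^{-\theta})$.
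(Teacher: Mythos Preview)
Your proposal is correct and follows essentially the same approach as the paper: perturb the dual optimizer $v^*$ in the direction of the $S$-fullness witness $w$ to obtain $v_\e = v^* - \e w$, use the resulting strict positivity of $f - \mult{h}{v_\e}$ to invoke the Effective Positivstellensatz componentwise, and then tune $\e$ against $\ell$ to obtain the rate. The bounds on $q_{i,\min}$ and $q_{i,\max}$, the role of the support condition in \eqref{eq:ell_bound}, and the final estimate $\dop^* - \dop^*_\ell \le \e\,\mult{t}{w}$ all match the paper's argument; your closing paragraph on the ``tension'' in choosing $\e$ is a useful gloss on a point the paper leaves implicit.
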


\begin{proof}
By \Cref{as:dual_optimizer}, the dual optimizer $v^*$ of \eqref{eq:gmp_dual_obj} exists.
For any $\e > 0$, define the perturbed vector $v_\e$ by
\begin{equation}\label{eq:sdp_perturbation}
v_\e = v^* - \e w,
\end{equation}
where $w$ is as in \Cref{as:x-full}. 
Accompanying $v_\e$, we have the strictly positive polynomial sequence $q = (q_i)_{i=1}^m \in \cl C$ defined as follows:
$$
q \BYDEF f - \mult{h}{v_\e} = f - \mult{h}{v^*}  + \e \mult{h}{w} > 0,
$$
by the feasibility of $v^*$ in \eqref{eq:gmp_dual_obj}, and since $\mult{h}{w} > 0$ by \Cref{as:x-full}.
Note that the premise for \Cref{thm:putinar_effective} holds for this $q \in \pos$. We have  
$$
\ell \ge\max_{i=1\ldots,m} \max  \left(\gamma_i \left(\frac{q_{i,\max}}{q_{i,\min}}\right)^{\frac{1}{\theta_i}}, \ell_{i,0}\right) \implies q \in \cl Q_{\ell}.
$$
For each $i=1,\ldots,m$, by the feasibility of $v^*$ in \eqref{eq:gmp_dual_obj}, 
\begin{equation}
q_{i,\min} \ge (f_i - \mult{h_i}{v^*})_{\min} + (\e \mult{h_i}{w})_{\min} \ge \e (\mult{h_i}{w})_{\min} > 0, \label{eq:tmp_psatz_l_1}
\end{equation}
where $(\mult{h_i}{w})_{\min} > 0$ by \Cref{as:x-full}. 
Letting 
$$
{h}^*_{i,\max} = \max_{j \in \textup{supp}(v^*)} \max_{x \in S_i} |h_{i, j}(x)|,
$$ we have
\begin{align}
q_{i,\max} & = (f_i-\mult{h_i}{v_\e})_{\max} \le f_{i,\max}+ (-\mult{h_i}{v^*})_{\max} + \e (\mult{h_i}{w})_{\max}, \notag \\ 
& \le f_{i,\max}  + \|v^*\|_1 {h}^*_{i,\max} + \e (\mult{h_i}{w})_{\max}, \notag \\
& \le 2 f_{i,\max} + \norm{v^*}_1 {h}^*_{i,\max}, \label{eq:tmp_psatz_l_2}
\end{align}
where the final inequality follows by choosing $\e \le  f_{i,\max} / ( \mult{h_i}{w} )_{\max} $. 
By \eqref{eq:tmp_psatz_l_1} and \eqref{eq:tmp_psatz_l_2}, $q \in \mathcal Q_\ell$ for 
\begin{equation}\label{eq:tmp_psatz_l}
\ell \ge \max_{i=1\ldots,m} \max \left(\gamma_i \left(\frac{2 f_{i,\max} + \norm{v^*}_1 h^*_{i,\max}}{\e (\mult{h_i}{w})_{\min}}\right)^{\frac{1}{\theta_i}}, \ell_{i,0}\right).
\end{equation}
By the optimality of $v^*$, we have that $\dop^* - \mult{t}{v^*} = 0$. By \eqref{eq:sdp_perturbation}, we have that
$ \mult{t}{v^*} - \e \, \mult{t}{w}- \mult{t}{v_\e} = 0.
$ Therefore,
$$
0 \le \dop^* - \dop^*_{\ell} =  \dop^* - (\mult{t}{v^*} - \e \, \mult{t}{w}- \mult{t}{v_\e}) - \dop^*_\ell 
= \mult{t}{v_\e} - \dop^*_\ell + \e \,\mult{t}{w},
$$
By \eqref{eq:ell_bound}, $\textup{supp}(v_\e) \subseteq J_\ell$,
implying that 
$$
f - \mult{h}{v_\e} = f - \mult{h^\ell}{v_\e} \in \cl Q_\ell
$$
and
$$
\sum_{j \in J_\ell} v_{\e, j} u^\ell_j \le 0
$$
for all $u^\ell \in K^{\ell}$, since $v_\e \in -K^*$.
That is, $v_\e$ is feasible in $\dop_\ell^*$, and $\mult{t}{v_\e} - \dop^*_\ell \le 0$. Thus
$$
0 \le \dop^* - \dop^*_\ell \le \e(\mult{t}{w}). 
$$
Note that $\mult{t}{w} > 0$ (see \eqref{eq:t_dot_w} in the proof of \Cref{lem:compactness_p_star}). Rearranging \eqref{eq:tmp_psatz_l} for $\e$ and substituting into the above gives the result for $\ell \ge \max\big(\max_{i=1, \ldots,m} \ell_{i,0}$, $\max_{ j\in \textup{supp}(v^*) \cup \textup{supp}(w)}\deg(h_j)\big)$: 
$$
0 \le \dop^* - \dop^*_\ell \le  \max_{i=1,\ldots,m}\left(\frac{\mult{t}{w}}{(\mult{h_i}{w})_{\min}} \left(\frac{\gamma_i}{\ell} \right)^{\theta_i}(2 f_{i,\max} + \norm{v^*}_1 h_{i,\max} )\right) \le \kappa \, \ell^{-\theta}.
$$
with $\kappa$ and $\theta$ as in \eqref{eq:kappa theta}.

\end{proof}

\begin{corollary}\label{cor:gmp_convergence}
    Under Assumption \ref{as:x-full}, \Cref{as:dual_optimizer} and Assumption \ref{as:archimedean},
    $$
    0 \le \pop^* - \pop_\ell^* \le \dop^* -\dop_{\ell}^* \le \kappa\, \ell ^{-\theta} 
    $$
    with $\kappa, \theta$ as in \Cref{thm:gmp_convergence}.
\end{corollary}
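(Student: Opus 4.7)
The corollary is an immediate consequence of \Cref{thm:gmp_convergence} once we interleave three elementary order relations among the four quantities $\pop^*,\pop_\ell^*,\dop^*,\dop_\ell^*$: the hierarchy ordering $\pop_\ell^* \le \pop^*$, the weak duality $\dop_\ell^* \le \pop_\ell^*$ of the truncated pair, and the strong duality $\pop^* = \dop^*$ of \Cref{lem:compactness_relaxation}. No new analytic tools are required; the entire quantitative content lies in \Cref{thm:gmp_convergence}, and this corollary is purely a matter of transferring the rate from the dual gap to the (a priori smaller) primal gap.

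The plan is first to establish $\pop_\ell^* \le \pop^*$ by checking that any $\lambda \in \cl L$ feasible for \eqref{eq:relax_obj} is also feasible for ${\pop}_\ell^*$. Indeed, $\cl Q_\ell \subseteq \cl Q$ implies $\cl L \subseteq \cl L_\ell$ by dualization; moreover, if $t - \apply{\lambda}{h} \in K$, then restricting to the indices in $J_\ell$ gives $t^\ell - \apply{\lambda}{h^\ell} \in K^\ell$ by the very definition of $K^\ell$ as the projection of $K$ onto $\RR^{J_\ell}$. Passing to the infimum over the larger feasible set yields $\pop_\ell^* \le \pop^*$ and in particular the leftmost inequality $0 \le \pop^* - \pop_\ell^*$.

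Next, I would combine the weak duality $\dop_\ell^* \le \pop_\ell^*$ for the truncated problems (noted in the excerpt immediately after \eqref{eq:relax_dual_obj}) with the strong duality $\pop^* = \dop^*$ guaranteed by \Cref{lem:compactness_relaxation} under \Cref{as:x-full} and \Cref{as:archimedean}. Subtracting from $\pop^*$ yields
$$
\pop^* - \pop_\ell^* \;\le\; \pop^* - \dop_\ell^* \;=\; \dop^* - \dop_\ell^*.
$$
Finally, \Cref{thm:gmp_convergence} directly furnishes the quantitative bound $\dop^* - \dop_\ell^* \le \kappa\, \ell^{-\theta}$ with the constants $\kappa$ and $\theta$ of \eqref{eq:kappa theta}, and concatenating the three inequalities produces the announced chain.

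There is no genuine obstacle: the heavy lifting (strong duality via Slater and the effective Positivstellensatz estimate on the dual gap) has been done in the preceding lemmas and in \Cref{thm:gmp_convergence}, and the corollary itself is a short book-keeping argument that upgrades the dual-side rate to a simultaneous primal and dual convergence rate.
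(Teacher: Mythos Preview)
Your proof is correct and follows exactly the paper's approach: strong duality for the GMP, weak duality for the truncated pair, and the monotonicity $\pop_\ell^* \le \pop^*$, combined with the rate from \Cref{thm:gmp_convergence}. The only cosmetic difference is that the paper cites \Cref{prop:slaters_d_star} for strong duality while you cite \Cref{lem:compactness_relaxation}, but both yield $\pop^* = \dop^*$ under the stated assumptions.
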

\begin{proof}
    Follows from strong duality in the generalized moment problem (\Cref{prop:slaters_d_star}), weak duality in the Moment-SoS hierarchy and \Cref{thm:gmp_convergence}.
\end{proof}

\subsection{Convergence of the moment feasibility sets}\label{sec:hausdorff_dist}

In this section, we establish the convergence between the GMP feasibility set \eqref{eq:relax_con} and the feasibility set of the relaxations.

Recall that $\rng_{i,k}$ is the space of polynomials in $\rng_i$ of degree at most $k$.
For $k \in \NN$ and $\lambda \in \drng_i$, let $\lambda^{(k)}\in \drng_i$ such that $\lambda^{(k)}_{|\rng_{i,k}} = \lambda_{|\rng_{i,k}}$ and $\apply{\lambda^{(k)}}{\bm x_i^{\a}}=0$ if $|\a|>k$. 
Here, $\lambda^{(k)}$ is the k-truncation of the linear functional $\lambda$, meaning its moments agree with $\lambda$ up to degree $k$ and are zero for degrees greater than $k$.

Let $\drng_i^{(k)}=\{\lambda^{(k)} \mid \lambda \in \drng_i\}$ and 
$\drng^{(k)} = \prod_{i=1}^{m} \drng_i^{(k)}$.
To study the distance between spaces of linear functionals, we equip $\drng_i$ with the weighted norm $\norm{ \cdot }_i$ defined such that for $\bm x_i = (\bm x_{i,1},\ldots, \bm x_{i,n_i}$),
$$\norm{\lambda}_i \BYDEF \sup_{\a \in \mathbb{N}^{n_i}} \frac{|\apply{\lambda}{\bm x_i^\a}|}{\a !}.$$
This norm was applied in \cite{lasserre2013K} to ensure continuity of linear functionals over closed (not necessarily compact) semi-algebraic sets.
We denote
$\drng^{\bnd}_i=\{\lambda \in \drng_i  \mid \norm{\lambda}_i < \infty\}$ and $\drng^{\bnd} = \prod_{i=1}^{m} \drng^{\bnd}_i$ equipped with the product norm, which we denote with $\norm{\cdot}$. The following lemma establishes that the space of measures on $S_i$ can be embedded into the normed space $\drng^{\bnd}_i$.

\begin{lemma}\label{lem:boundeness_measure}
Every finite signed Borel measure $\mu$ on the compact set $S_i$ has a finite weighted norm, namely $\norm{\mu}_i \le \sup_{\a} \frac{R_i^{|\a|}} {\a!}\mu(S_i)$ for $R_i>0$ such that $S_i \subset B_{R_i}(\RR^{n_i})$. That is,
$$
    \cl M(S_i) \subset \cl D^{\bnd}_i.
$$
\end{lemma}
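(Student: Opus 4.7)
The plan is to exploit the compactness of $S_i$ together with the Jordan decomposition of a signed measure. Since $S_i \subset B_{R_i}(\RR^{n_i})$, every monomial $\bm x_i^\a$ satisfies the pointwise bound $|\bm x_i^\a(x)| = \prod_j |x_{i,j}|^{\a_j} \le \|x\|^{|\a|} \le R_i^{|\a|}$ on $S_i$, so integrating against the total variation $|\mu|$ (rather than $\mu$ itself) turns the moment estimate into a one-line triangle inequality. The factorial denominators $\a!$ then tame the remaining exponential growth in $|\a|$.

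Concretely, I would write $\mu = \mu^+ - \mu^-$ with $|\mu| = \mu^+ + \mu^-$ finite on $S_i$, and for each multi-index $\a \in \NN^{n_i}$ estimate
$$
|\apply{\mu}{\bm x_i^\a}| = \left| \int_{S_i} \bm x_i^\a \, d\mu \right| \le \int_{S_i} |\bm x_i^\a| \, d|\mu| \le R_i^{|\a|} \, |\mu|(S_i).
$$
Dividing by $\a!$ and taking the supremum over $\a$ gives the stated inequality (with $\mu(S_i)$ read as $|\mu|(S_i)$ in the signed case), and reduces the inclusion $\cl M(S_i) \subset \drng_i^{\bnd}$ to showing that the prefactor $\sup_\a R_i^{|\a|}/\a!$ is finite.

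For this last step, which presents no real obstacle, I would factor $R_i^{|\a|}/\a! = \prod_{j=1}^{n_i} R_i^{\a_j}/\a_j!$ and bound the supremum by the total sum:
$$
\sup_{\a \in \NN^{n_i}} \frac{R_i^{|\a|}}{\a!} \le \sum_{\a \in \NN^{n_i}} \frac{R_i^{|\a|}}{\a!} = \prod_{j=1}^{n_i} \sum_{k=0}^\infty \frac{R_i^k}{k!} = e^{n_i R_i} < \infty.
$$
Combining this with the previous display yields $\norm{\mu}_i \le |\mu|(S_i)\, e^{n_i R_i}$, so $\mu \in \drng_i^{\bnd}$, and since $\mu \in \cl M(S_i)$ was arbitrary, the claimed inclusion follows. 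The only point requiring mild care is that nothing in the argument assumes positivity of $\mu$: the Jordan decomposition allows us to control $|\apply{\mu}{\bm x_i^\a}|$ by an integral against the finite positive measure $|\mu|$, after which the estimate is purely pointwise.
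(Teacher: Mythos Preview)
Your proof is correct and follows essentially the same approach as the paper: Jordan decomposition to reduce to the positive case, the pointwise bound $|\bm x_i^\a|\le R_i^{|\a|}$ from $S_i\subset B_{R_i}$, and then the supremum over $\a$. The only difference is cosmetic---you integrate directly against $|\mu|$ and add the explicit finiteness bound $\sup_\a R_i^{|\a|}/\a!\le e^{n_i R_i}$, which the paper leaves implicit.
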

\begin{proof}
By the Jordan decomposition, any signed measure $\mu \in \cl M(S_i)$ can be written as $\mu = \mu_+ - \mu_-$ where $\mu_+, \ \mu_- \in \Mp(S_i)$. By the triangle inequality for the norm, if we can show that $\cl M_+(S_i) \subset \cl D^{\bnd}_i$, the result will follow.
Let $\mu \in \Mp(S_i)$. Since $S_i$ is compact, $\forall x \in S_i$, $|x^\a| \le R^{|\a|}$ for some $R > 0$, implying $|\mu(\bm x_i^\a)| \le R^{|\a|} \mu(S_i)$, where $|\a| \BYDEF \sum_{i=1}^{n_i} \a_i$.  Then $\| \mu \|_i = \sup_{\a} | \mu(\bm x_i^\a) | / \a ! \le \sup_{\a} R^{|\a|}\mu(S_i) / \a ! < \infty$, so $\mu \in \cl D^{\bnd}_i$.
\end{proof}

Let the norm of the dual space $(\cl D_i^{\bnd})^*$ be given by the standard operator norm $$\norm{f}^*_i \BYDEF \sup \{ |f(\l)| / \norm{\l}_i : \l \in \drng_i^\bnd, \l \neq 0 \}.$$ 
We present now  a characterization of the dual space $(\cl D_i^{\bnd})^*$ that we will need hereafter. Let $A$ be the space of real-analytic functions $f = \sum_{\a \in \mathbb{N}^{n_i}} f_\a \bm x^\a$ equipped with the norm $\norm{f}_A \BYDEF \sum_{\a \in \mathbb{N}^{n_i}} \a! | f_\a |$. 
    Define the mapping $\ph : (\cl D_i^{\bnd})^* \rightarrow A$ by
    $$
    \ph(f)(\bm x) \BYDEF \sum_{\a \in \mathbb{N}^{n_i}} f(\d_\a) \bm x^\a.
    $$

The following classical lemma establishes that the mapping $\ph$ is an isometric isomorphism:
\begin{lemma}\label{lem:dual_analytic}
    The space $(\drng^{\bnd}_i)^*$ equipped with the norm $\norm{\cdot}^*_i$ is isometrically isomorphic to the space of real-analytic functions $A$ with norm $\norm{f}_A \BYDEF \sum_{\a \in \mathbb{N}^{n_i}} \a! | f_\a |$. 
\end{lemma}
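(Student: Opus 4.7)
My plan is to construct an explicit inverse $\Psi : A \to (\drng^{\bnd}_i)^*$ to $\ph$, verify that $\ph$ and $\Psi$ are mutually inverse, and check the isometry in both directions. The key structural fact running in the background is the isometric bijection $\drng^{\bnd}_i \cong \ell^\infty(\NN^{n_i})$ given by $\lambda \mapsto (\apply{\lambda}{\bm x^\a}/\a!)_\a$, which makes $\drng^{\bnd}_i$ a Banach space and identifies the coordinate functionals $\d_\a \in \drng^{\bnd}_i$ (defined by $\d_\a(\bm x^\b) = \delta_{\a\b}$, satisfying $\norm{\d_\a}_i = 1/\a!$) with the standard basis.

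I would define $\Psi$ on $f = \sum_\a f_\a \bm x^\a \in A$ by
\[
\Psi(f)(\lambda) \BYDEF \sum_{\a \in \NN^{n_i}} f_\a\, \apply{\lambda}{\bm x^\a}, \qquad \lambda \in \drng^{\bnd}_i.
\]
Absolute convergence and the bound $\norm{\Psi(f)}^*_i \le \norm{f}_A$ follow from $|f_\a \apply{\lambda}{\bm x^\a}| \le |f_\a|\, \a!\, \norm{\lambda}_i$. For the reverse norm bound, I would exhibit for each $N$ a norming functional $\lambda_N \in \drng^{\bnd}_i$ with $\apply{\lambda_N}{\bm x^\a} = \mathrm{sign}(f_\a)\, \a!$ if $|\a|\le N$ and zero otherwise, so that $\norm{\lambda_N}_i \le 1$ and $\Psi(f)(\lambda_N) = \sum_{|\a|\le N} \a!\,|f_\a| \to \norm{f}_A$. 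Direct computation yields $\Psi(f)(\d_\a) = f_\a$, hence $\ph \circ \Psi = \mathrm{id}_A$, and linearity of any $F \in (\drng^{\bnd}_i)^*$ ensures $\Psi(\ph(F))$ agrees with $F$ on every finitely supported functional.

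The principal obstacle is completing the identity $\Psi \circ \ph = \mathrm{id}$: verifying first that $\ph(F) \in A$, i.e., that $\sum_\a \a!\,|F(\d_\a)| < \infty$, and then extending agreement from finitely supported functionals to all of $\drng^{\bnd}_i$. The naive bound $|F(\d_\a)| \le \norm{F}^*_i/\a!$ only gives $\a!\,|F(\d_\a)| \le \norm{F}^*_i$, which is not summable term by term. To obtain summability, I would evaluate $F$ at the functional $\lambda = \sum_\a \mathrm{sign}(F(\d_\a))\,\a!\,\d_\a \in \drng^{\bnd}_i$, which satisfies $\norm{\lambda}_i \le 1$; passing the sum through $F$ on finite truncations yields $\sum_{|\a|\le N} \a!\,|F(\d_\a)| \le \norm{F}^*_i$ for all $N$, hence $\norm{\ph(F)}_A \le \norm{F}^*_i$. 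The delicate step---and the one requiring the framework of \cite{lasserre2013K}---is justifying the limit $F(\lambda) = \sum_\a \mathrm{sign}(F(\d_\a))\,\a!\,F(\d_\a)$, since the truncations $\lambda_N$ converge to $\lambda$ only in the weak-$*$ topology induced by the pairing $(\lambda,p) \mapsto \apply{\lambda}{p}$ with $p \in \rng_i$, and not in the norm of $\drng^{\bnd}_i$. Once surjectivity is established, $\Psi \circ \ph = \mathrm{id}$ and the matching of norms $\norm{\ph(F)}_A = \norm{F}^*_i$ follow by combining the two inequalities already proved.
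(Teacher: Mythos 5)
Everything you actually prove coincides with the paper's own argument: your $\Psi$ is the paper's functional $f_h$ from its surjectivity step, your truncated sign functionals $\lambda_N$ are exactly the paper's norming functionals in step 5(b), and the bounds $\norm{\Psi(f)}^*_i\le\norm{f}_A$ and $\sum_{|\a|\le N}\a!\,|F(\d_\a)|\le\norm{F}^*_i$ are carried out the same way there. The problem is the step you single out and then leave open: the identity $\Psi\circ\ph=\mathrm{id}$, i.e.\ the claim that every $F\in(\drng^{\bnd}_i)^*$ satisfies $F(\lambda)=\sum_{\a}F(\d_\a)\apply{\lambda}{\bm x^{\a}}$ for all $\lambda\in\drng^{\bnd}_i$. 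Deferring this to ``the framework of \cite{lasserre2013K}'' is not an argument, and the obstruction is not merely technical: under the isometry $\lambda\mapsto(\apply{\lambda}{\bm x^{\a}}/\a!)_{\a}$ the space $\drng^{\bnd}_i$ is a weighted $\ell^\infty$, the coordinate functionals $\d_\a$ span only a $c_0$-type subspace whose norm closure is proper, so the truncations $\lambda_N$ converge to $\lambda$ only coordinatewise and not in norm. Norm continuity of $F$ therefore cannot justify the limit interchange; worse, by Hahn--Banach there exist nonzero norm-continuous functionals on $\drng^{\bnd}_i$ that vanish on every $\d_\a$, so no argument that uses only $F\in(\drng^{\bnd}_i)^*$ can establish the representation. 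The missing step is precisely the injectivity of $\ph$ together with the inequality $\norm{F}^*_i\le\norm{\ph(F)}_A$, and your proposal does not close it.

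For what it is worth, you have put your finger on the exact point the paper's proof glosses over: in its step 5(a) it writes $\apply{f}{\lambda}=\sum_{\a}f(\bm d_\a)\apply{\lambda}{\bm x_i^{\a}}$ without justification, and injectivity is then deduced from the resulting norm identity. So your diagnosis of the crux is correct and the completed portions match the paper, but as a self-contained proof the proposal has a genuine gap at this representation step; closing it requires an additional hypothesis on the functionals considered (for instance, continuity with respect to the coordinatewise/weak-$*$ structure, or restricting attention to the closed span of the $\d_\a$, which is all that is used in \Cref{lem:hausdorff_conv} where $f$ is a polynomial acting by the natural pairing), rather than membership in the full norm dual alone.
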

For the sake of completeness, we provide its proof in \Cref{app:lem}.

The isometric isomorphism $\ph: (\mathcal{D}^\bnd_i)^* \to A$ established above allows us to identify elements in the two spaces. Consequently, we will use $f$ interchangeably to denote both a functional in $(\mathcal{D}^\bnd_i)^*$ and its corresponding analytic function in $A$, noting that the isometry implies their norms are equal, i.e., $\|f\|^* = \|f\|_A$.

We define the convex cones
$\cl L_{\ell}^{\bnd}(g_i)= \cl L_{\ell}(g_i) \cap \drng_i^{\bnd}$, 
$\cl L_\ell^{\bnd} = \prod_{i=1}^m \cl L^{\bnd}_\ell(g_i)$
 and the corresponding relaxation: 
$$
\begin{array}{rlcrl}
{\pop}_{\ell}^{\bnd} = \ \  \inf\ \ &\apply{\lambda}{f} \\
 \ \ \ \ \textnormal{s.t.} \ \ &t^\ell - \apply{\lambda}{h^\ell} \in K^\ell \\
            &\lambda \in \cl L^{\bnd}_{\ell}. 
\end{array}
$$
The feasible set of the primal problem is denoted $L^{\bnd}_\ell= \{\lambda \in \cl L^{\bnd}_{\ell}  \mid \ t^\ell-\apply{\lambda}{h^\ell} \in K^\ell \}$.
\begin{lemma}\label{lem:p_flat_p}
For $\ell\ge \deg(f)$
, $\pop_{\ell}^{\bnd}= \pop_{\ell}^*$. 
\end{lemma}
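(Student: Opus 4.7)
The inequality $\pop_\ell^\bnd \ge \pop_\ell^*$ is immediate, since $\cl L_\ell^\bnd \subseteq \cl L_\ell$ and the constraint defining $L^\bnd_\ell$ is obtained from that of the feasibility set of $\pop_\ell^*$ by intersection with $\cl D^\bnd$; hence every competitor for $\pop_\ell^\bnd$ is a competitor for $\pop_\ell^*$. The entire content of the lemma therefore lies in the reverse inequality $\pop_\ell^\bnd \le \pop_\ell^*$, and my plan is to show that any feasible $\lambda \in \cl L_\ell$ for $\pop_\ell^*$ can be replaced, without changing the objective value or violating the constraints, by a functional that has only finitely many non-zero moments and therefore automatically sits in $\cl D^\bnd$.

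The natural candidate is the componentwise $\ell$-truncation $\lambda^{(\ell)} = (\lambda_i^{(\ell)})_{i=1}^m$ introduced earlier in the paper, which agrees with $\lambda$ on every monomial of degree at most $\ell$ and vanishes on every monomial of strictly larger degree. Three observations need to be checked. First, $\lambda^{(\ell)} \in \cl L_\ell$: the cone $\cl Q_\ell(g_i)$ is spanned by polynomials of degree at most $\ell$, so $\apply{\lambda_i^{(\ell)}}{q} = \apply{\lambda_i}{q} \ge 0$ for every $q \in \cl Q_\ell(g_i)$. Second, the objective is preserved: since $\deg(f_i) \le \deg(f) \le \ell$, one has $\apply{\lambda^{(\ell)}}{f} = \apply{\lambda}{f}$. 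Third, the feasibility constraint $t^\ell - \apply{\lambda^{(\ell)}}{h^\ell} \in K^\ell$ is preserved, because for every $j \in J_\ell$ and every $i$, $\deg(h_{i,j}) \le \ell$ by the very definition of $J_\ell$, so $\apply{\lambda_i^{(\ell)}}{h_{i,j}} = \apply{\lambda_i}{h_{i,j}}$ and the whole vector $\apply{\lambda^{(\ell)}}{h^\ell}$ equals $\apply{\lambda}{h^\ell}$.

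It remains to verify that $\lambda^{(\ell)} \in \cl D^\bnd$, i.e., that $\norm{\lambda_i^{(\ell)}}_i < \infty$ for each $i$. This is where the truncation pays off: by definition $\apply{\lambda_i^{(\ell)}}{\bm x_i^\a} = 0$ whenever $|\a| > \ell$, so the supremum
$$\norm{\lambda_i^{(\ell)}}_i = \sup_{\a \in \NN^{n_i}} \frac{|\apply{\lambda_i^{(\ell)}}{\bm x_i^\a}|}{\a!} = \max_{|\a| \le \ell} \frac{|\apply{\lambda_i}{\bm x_i^\a}|}{\a!}$$
is taken over a finite set of indices and is therefore finite. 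Consequently $\lambda^{(\ell)}$ is feasible for $\pop_\ell^\bnd$ with $\apply{\lambda^{(\ell)}}{f} = \apply{\lambda}{f}$, which yields $\pop_\ell^\bnd \le \pop_\ell^*$ after taking the infimum over feasible $\lambda$ for $\pop_\ell^*$.

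There is no genuine obstacle: the argument is the standard observation that truncation is invisible to all data of degree $\le \ell$, combined with the trivial remark that a functional supported on finitely many monomials lies in every weighted dual space. The only point that requires a moment's attention is correctly invoking the definition of $J_\ell$ to ensure that every $h_{i,j}$ appearing in the truncated constraint has degree at most $\ell$, which is precisely why the hypothesis is formulated as $\ell \ge \deg(f)$ (rather than $\ell \ge \max(\deg(f), \deg(h))$) despite the presence of $h$ in the constraints.
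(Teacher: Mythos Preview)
Your proof is correct and follows essentially the same approach as the paper: truncate a feasible $\lambda$ for $\pop_\ell^*$ at degree $\ell$ and observe that the truncation lies in $\cl D^\bnd$, remains in $\cl L_\ell$, and preserves both the objective (since $\deg(f)\le\ell$) and the constraint (since every $h_{i,j}$ with $j\in J_\ell$ has degree $\le\ell$). The paper phrases this via a minimizing sequence $(\lambda_k)$ rather than an arbitrary feasible point, but that is cosmetic; your direct version is slightly cleaner and your explicit verification that $\lambda^{(\ell)}\in\cl L_\ell$ and $\lambda^{(\ell)}\in\cl D^\bnd$ fills in details the paper leaves implicit.
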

\begin{proof}
    Clearly $ \pop_{\ell}^* \le \pop_{\ell}^{\bnd}$.
    Let $(\lambda_k)_{k\in\mathbb{N}}$ be a minimizing sequence in $\pop_{\ell}^*$ converging to $\lambda^*$. As $\ell\ge \deg(f)$
    , we have that $\apply{\lambda_k^{(\ell)}}{f} = \apply{\lambda_k}{f}$ and, by definition of $h^\ell$, $t - \apply{\lambda_k^{(\ell)}}{h^\ell} = t - \apply{\lambda_k}{h^\ell} $ for all $k$. That is, the sequence $\lambda^{(\ell)}_k$ is feasible in $\pop_{\ell}^{\bnd}$, belongs to $\cl L_{\ell}^{\bnd}$ and achieves the same infimum  $\pop_{\ell}^*$. Thus $\pop^{\bnd}_{\ell}\le \pop_{\ell}^{*}\le \pop^{\bnd}_{\ell}$, as required.  
\end{proof}
\brev
\begin{remark}\label{rem:canonical_sol}
    For any optimizer $\lambda^*_\ell$ of $\pop^*_\ell$, its $\ell$-truncation $\lambda^{* (\ell)}_\ell$ is also optimal in $\pop^*_\ell$. We thus refer to such optimizers satisfying $\lambda^*_\ell = \lambda^{* (\ell)}_\ell$ as \emph{canonical}. 
\end{remark}
\erev

Denoting the distance associated to the product norm $\norm{\cdot}$ as $\rho$, we can define the ``distance" $\rho(\mu,\Gamma)$ between $\mu\in \cl D^{\bnd}$ and $\Gamma\subset \cl D^{\bnd}$
and the Hausdorff distance $\rho_H(\Gamma_1,\Gamma_2)$ between $\Gamma_1\subset \cl D^{\bnd}$ and $\Gamma_2\subset \cl D^{\bnd}$ as follows:

\begin{equation}\label{eq:measures-distance-definitions}
\rho(\mu,\Gamma)\BYDEF\inf_{\mu'\in \Gamma}\rho(\mu,\mu'),\quad
\rho_H(\Gamma_1,\Gamma_2)\BYDEF\max\{\sup_{\mu\in \Gamma_1}\rho(\mu,\Gamma_2),\sup_{\mu\in \Gamma_2}\rho(\mu,\Gamma_1)\}.
\end{equation}
\begin{lemma}\label{lem:hausdorff_conv}
    Assume the value of $\pop_\ell^*$ is attained for any $f\in \rng$. Let $k \in \mathbb{N}$. For any $c \ge 0$ and any $\ell \ge k$, the following equivalence holds:
    $$
    ( \forall f \in \rng_k \text{ with } \| f \|_A = 1, \, 0 \le \pop^* - \pop_\ell^* \le c ) \iff \rho_H(\O^{(k)}, L_\ell^{(k)}) \le c,
    $$
    where $\O^{(k)} \BYDEF \{\lambda^{(k)} \mid \lambda \in \O\}$ and $L_\ell^{(k)} \BYDEF \{\lambda^{(k)} \mid \lambda \in L^{\bnd}_\ell\}$ are the sets of moment sequences truncated to degree $k$.
\end{lemma}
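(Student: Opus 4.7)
The statement is a Hörmander-style duality: the Hausdorff distance between two convex moment sets is bounded iff the gap of their support functions is bounded in the dual norm. Restricting Lemma~\ref{lem:dual_analytic} to degree $\le k$ identifies the dual of the finite-dimensional space $(\cl D^{(k)}, \|\cdot\|)$ with $(\rng_k, \|\cdot\|_A)$; in this duality, $\pop^*$ and $\pop_\ell^*$, viewed as functions of the cost $f$, are (up to a sign) the support functions of $\O^{(k)}$ and $L_\ell^{(k)}$.

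\textbf{Preliminary inclusion.} For every $\mu\in\O$: Putinar under Assumption~\ref{as:archimedean} gives $\cl M_+(S_i) = \cl L(g_i) \subseteq \cl L_\ell(g_i)$; Lemma~\ref{lem:boundeness_measure} yields $\mu\in\cl D^\bnd$; and projecting $t-\apply{\mu}{h}\in K$ onto the coordinates in $J_\ell$ produces an element of $K^\ell$. Hence $\mu\in L_\ell^\bnd$, so $\O^{(k)}\subseteq L_\ell^{(k)}$. This yields weak duality $0\le\pop^*-\pop_\ell^*$ for every cost and makes one half of the Hausdorff distance automatically vanish: $\sup_{\nu\in\O^{(k)}}\rho(\nu,L_\ell^{(k)})=0$.

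\textbf{Direction $(\Leftarrow)$.} Fix $f\in\rng_k$ with $\|f\|_A=1$. Let $\lambda_\ell^*\in L_\ell^\bnd$ attain $\pop_\ell^*$ (attainment hypothesis together with Lemma~\ref{lem:p_flat_p}, since $\ell\ge k\ge\deg(f)$) and, using the Hausdorff hypothesis, pick $\mu\in\O$ with $\|\mu^{(k)}-\lambda_\ell^{*(k)}\|\le c$. Since $\apply{\nu}{f}=\apply{\nu^{(k)}}{f}$ for $f\in\rng_k$ and the weighted norms are dual (Lemma~\ref{lem:dual_analytic}),
\[
\pop^* - \pop_\ell^* \;\le\; \apply{\mu-\lambda_\ell^*}{f} \;=\; \apply{\mu^{(k)}-\lambda_\ell^{*(k)}}{f} \;\le\; \|\mu^{(k)}-\lambda_\ell^{*(k)}\|\cdot\|f\|_A \;\le\; c.
\]

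\textbf{Direction $(\Rightarrow)$.} By the preliminary inclusion, only the bound $\sup_{\lambda\in L_\ell^\bnd}\rho(\lambda^{(k)},\O^{(k)})\le c$ remains. Fix $\lambda\in L_\ell^\bnd$ and invoke the Hahn--Banach / Hörmander formula for the distance from a point to a non-empty convex set in the finite-dimensional normed space $(\cl D^{(k)},\|\cdot\|)$, whose dual is $(\rng_k,\|\cdot\|_A)$:
\[
\rho(\lambda^{(k)},\O^{(k)}) \;=\; \sup_{f\in\rng_k,\; \|f\|_A\le 1}\Bigl(\apply{\lambda}{f}-\sup_{\mu\in\O}\apply{\mu}{f}\Bigr).
\]
Writing $\pop^*(g),\pop_\ell^*(g)$ for the primal and relaxed values at cost $g$, we have $\sup_{\mu\in\O}\apply{\mu}{f}=-\pop^*(-f)$, while feasibility of $\lambda$ for cost $-f$ gives $\apply{\lambda}{f}\le-\pop_\ell^*(-f)$. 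Hence each bracket is bounded by $\pop^*(-f)-\pop_\ell^*(-f)\le c$, using the hypothesis applied to $-f$ and the positive homogeneity of $\pop^*,\pop_\ell^*$ to pass from $\|\cdot\|_A=1$ to $\|\cdot\|_A\le 1$. The main technical step is this distance-duality identity, which rests on Hahn--Banach separation and on the explicit isometric identification of $(\cl D^{(k)})^*$ with $(\rng_k,\|\cdot\|_A)$ coming from Lemma~\ref{lem:dual_analytic}; once that is in place, the rest is careful sign-tracking between the infimum defining $\pop^*$ and the supremum defining the support function.
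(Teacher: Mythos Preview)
Your proof is correct and follows essentially the same approach as the paper: both directions rely on the inclusion $\O^{(k)}\subseteq L_\ell^{(k)}$, the isometric duality $(\cl D^{(k)})^*\cong(\rng_k,\|\cdot\|_A)$ from Lemma~\ref{lem:dual_analytic}, and Hahn--Banach separation in the finite-dimensional truncated space. The backward direction is virtually identical to the paper's. For the forward direction, the paper argues by contradiction (separating a point of $L_\ell^{(k)}$ from $\O^{(k)}+B_c$ to manufacture a bad cost $f$), whereas you invoke the H\"ormander distance formula $\rho(\lambda^{(k)},\O^{(k)})=\sup_{\|f\|_A\le 1}(\apply{\lambda}{f}-h_{\O^{(k)}}(f))$ directly and bound each bracket by $\pop^*(-f)-\pop_\ell^*(-f)$; this is the same separation argument packaged more cleanly. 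One small point worth making explicit in your write-up is that $\O^{(k)}$ is compact (hence closed) convex, being the continuous linear image of the weak$^*$-compact set $\O$, which is what licenses the distance formula.
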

\begin{proof}
    We first prove the backward direction $(\Leftarrow)$. Assume $\rho_H(\O^{(k)}, L_\ell^{(k)}) \le c$ for some $\ell \ge k$. Let $f \in \rng_k$ with $\|f\|_A = 1$.
    Since $\O \subseteq L^{\bnd}_\ell$, it follows that $\O^{(k)} \subseteq L_\ell^{(k)}$ and that $\pop_\ell^* \le \pop^*$.
    Let $\lambda_\ell^*$ be an optimizer for $\pop_\ell^*$, which exists by assumption. Its truncation $\lambda_\ell^{*(k)}$ lies in $L_\ell^{(k)}$. The fact $\rho_H(\O^{(k)}, L_\ell^{(k)}) \le c$ implies the existence of a $\lambda \in \O$ such that its truncation $\lambda^{(k)}$ satisfies $\|\lambda_\ell^{*(k)} - \lambda^{(k)}\| \le c$.
    Since $\deg(f) = k$, we have $\apply{f}{\mu} = \apply{f}{\mu^{(k)}}$ for any measure $\mu$. We can write
    \begin{align*}
    \pop_\ell^* = \apply{f}{\lambda_\ell^*} = \apply{f}{\lambda_\ell^{*(k)}} &= \apply{f}{\lambda^{(k)}} + \apply{f}{\lambda_\ell^{*(k)} - \lambda^{(k)}} \\
    &= \apply{f}{\lambda} + \apply{f}{\lambda_\ell^{*(k)} - \lambda^{(k)}}.
    \end{align*}
    By definition, $\apply{f}{\lambda} \ge \pop^*$. The second term is bounded using the dual norm. Let $f$ also denote the linear functional in $(\cl D^{\bnd})^*$ corresponding to the polynomial $f \in A$. By the isometry in \Cref{lem:dual_analytic}, $\|f\|^* = \|f\|_A$, implying
    $$
    \apply{f}{\lambda_\ell^{*(k)} - \lambda^{(k)}} \ge - \left|\apply{f}{\lambda_\ell^{*(k)} - \lambda^{(k)}}\right| \ge -\|f\|^* \cdot \|\lambda_\ell^{*(k)} - \lambda^{(k)}\| \ge -c.
    $$
    Combining the equations gives $\pop_\ell^* \ge \pop^* - c$, which implies $\pop^* - \pop_\ell^* \le c$, as required.\\
    We now prove the forward direction $(\Rightarrow)$ by contradiction. Assume that for any $f \in \rng_k$ with $\|f\|_A=1$, we have $0 \le \pop^* - \pop_\ell^* \le c$ for any $\ell \ge k$. Suppose, for the sake of contradiction, that $\rho_H(\O^{(k)}, L_\ell^{(k)}) > c$.
    This implies that there exists a $\lambda'_0 \in L_\ell^\bnd$ whose truncation $\lambda_0'^{(k)}$ is not in the set $\O^{(k)} + B_c(\mathcal{D}^{(k)})$, where $\mathcal{D}^{(k)}$ is the space of moment sequences up to degree $k$, and $B_c(\mathcal{D}^{(k)})$ the closed ball of radius $c$ in $\mathcal{D}^{(k)}$. The sets $\{\lambda_0'^{(k)} \}$ and $\O^{(k)} + B_c(\mathcal{D}^{(k)})$ are disjoint, compact and convex. Thus, they can be strictly separated by the separation theorem  (see, e.g., \cite[p. 59]{rudin1991functional}).
     There exists an element $q$ from the dual space of $\mathcal{D}^{(k)}$, such that
    $$
    \min_{\l \in \O^{(k)}+B_c(\cl D^{(k)})} \apply{\l}{q} - \apply{\lambda_0'^{(k)}}{q} = \beta 
    > 0.
    $$
    As a dual element to the space of truncated moment sequences, $q$ can be identified with a polynomial in $\rng_k$.
    Note that
    $$
    \min_{\l \in \O^{(k)}+B_c(\cl D^{(k)})} \apply{\l}{q} = \min_{\l' \in \O^{(k)}} \apply{\l'}{q} + \min_{\l'' \in  B_c(\cl D^{(k)})} \apply{\l''}{q}.
    $$
    $$
    = \min_{\l' \in \O^{(k)}} \apply{\l'}{q} - c \max_{\| \l'' \| = 1} \apply{\l''}{q}.
    $$
    $$
    = \min_{\l' \in \O^{(k)}} \apply{\l'}{q} - c \|q\|^*.
    $$
    Considering the problems $\pop^*$ and $\pop_\ell^*$ with objective $f = q / \| q \|_A$. 
    We obtain
    $$
    \pop^* - c> \pop_\ell^*,
    $$
    that is 
    $$
    \pop^* - \pop_\ell^* > c ,
    $$
    a contradiction.
\end{proof}

\begin{theorem}\label{thm:haus_dis}
    Under \Cref{as:x-full}, \Cref{as:dual_optimizer} and \Cref{as:archimedean}, for any fixed degree  $k \in \mathbb{N}$, with $\ell \ge k$ and $\ell$ satisfying \eqref{eq:ell_bound},
    $$\rho_H(\O^{(k)}, L^{(k)}_\ell) \le \kappa' \ell^{-\theta}$$
    where 
    $$
    \kappa' =  \max_{i=1,\ldots,m}\left( \frac{ \mult{t}{w}}{(\mult{h_i}{w})_{\min}} \gamma_i^{\theta_i}(2C_{S_i} + \norm{v^*}_1 h_{i,\max}  )\,  \right),
    $$
    and $C_{S_i} = \sup_{\alpha \in \mathbb{N}^{n_i}}  (\max_{x \in S_i} |x^\alpha|) / \alpha!$ is a constant depending on the set $S_i$. The other constants $\theta, \gamma_i, \lo_i, w, v^*, h_{i,\max}$ are as in \Cref{thm:gmp_convergence}.
\end{theorem}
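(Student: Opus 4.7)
The plan is to apply \Cref{lem:hausdorff_conv} to convert the Hausdorff distance bound into a uniform optimality gap bound, and then invoke \Cref{cor:gmp_convergence} for each admissible objective, the key additional work being a uniform (in $f$) control of the constants.

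First I would verify the premise of \Cref{lem:hausdorff_conv}, namely attainment of $\pop^*_\ell$ for every $f \in \rng$. This follows from the same compactness reasoning as in \Cref{lem:compactness_relaxation} applied to the truncated relaxation: under Assumptions \ref{as:x-full} and \ref{as:archimedean}, the truncated feasibility set $L^{\bnd}_\ell$ is compact in the finite-dimensional topology on moment sequences of degree at most $\ell$, and the linear objective $\lambda \mapsto \apply{\lambda}{f}$ is continuous, so the infimum is achieved. The lemma then reduces the goal to establishing
\[
\pop^* - \pop^*_\ell \le \kappa' \ell^{-\theta}\quad \text{uniformly for } f \in \rng_k \text{ with } \norm{f}_A = 1.
\]

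For a fixed such $f$, \Cref{cor:gmp_convergence} yields $\pop^*-\pop^*_\ell \le \kappa\, \ell^{-\theta}$ with $\kappa$ from \eqref{eq:kappa theta}. The $f$-dependence of $\kappa$ enters through $f_{i,\max}$ and through $\norm{v^*}_1$. For $f_{i,\max}$, I would use that for every $x\in S_i$,
\[
|f_i(x)| \le \sum_\a |f_{i,\a}|\,|x^\a| = \sum_\a \a!|f_{i,\a}| \cdot \frac{|x^\a|}{\a!} \le \norm{f_i}_A \cdot \sup_\a \frac{\max_{x\in S_i}|x^\a|}{\a!} \le C_{S_i},
\]
since $\norm{f_i}_A \le \norm{f}_A = 1$. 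Substituting $f_{i,\max} \le C_{S_i}$ into \eqref{eq:kappa theta} produces the expression announced for $\kappa'$.

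The anticipated main obstacle is the dependence of $\norm{v^*}_1$ (and of the degree threshold \eqref{eq:ell_bound} through $\mathrm{supp}(v^*)$) on $f$: as $f$ ranges over $\{f\in \rng_k:\norm{f}_A=1\}$, the dual optimizer $v^*(f)$ varies, and no uniform $\ell^1$-bound is immediate from the existing hypotheses. The statement of the theorem side-steps this by letting $v^*$ in $\kappa'$ refer to the dual optimizer attached to the specific objective realizing the supremum in $\rho_H(\O^{(k)}, L_\ell^{(k)})$; equivalently, $\norm{v^*}_1$ in $\kappa'$ should be read as $\sup_{\norm{f}_A=1,\,f\in \rng_k}\norm{v^*(f)}_1$, to be taken as finite as an implicit regularity assumption on the problem data. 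With this uniform reading in place, combining the pointwise bound from \Cref{cor:gmp_convergence} with the uniform estimate on $f_{i,\max}$ closes the argument.
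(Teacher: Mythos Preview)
Your approach is essentially the same as the paper's: apply \Cref{lem:hausdorff_conv} to reduce to a uniform optimality-gap estimate, bound $f_{i,\max}\le C_{S_i}$ via the $A$-norm constraint $\norm{f}_A=1$, and substitute into the constant of \Cref{thm:gmp_convergence}. The paper's proof does exactly this and nothing more; in particular, the issue you flag regarding the $f$-dependence of $v^*$ (and of the degree threshold \eqref{eq:ell_bound}) is not addressed in the paper either and is left implicit in the statement, so your reading of $\kappa'$ matches the paper's intended interpretation.
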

\begin{proof}
    Take an objective function $f \in \rng_k$ such that $\| f \|_A = 1$. To apply \Cref{thm:gmp_convergence}, we need a uniform bound on $f_{i,\max}$. By triangle inequality and re-arranging terms, we have:
    \begin{align*}
    f_{i,\max} \le \|f_i\|_{\infty} &\le \sum_{\alpha} |f_{i,\alpha}| (\max_{x \in S_i} |x^\alpha|) \\
    &= \sum_{\alpha} (\alpha! |f_{i,\alpha}|) \frac{\max_{x \in S_i} |x^\alpha|}{\alpha!} \\
    &\le \left( \sum_{\alpha} \alpha! |f_{i,\alpha}| \right) \sup_{\beta \in \mathbb{N}^{n_i}} \left( \frac{\max_{x \in S_i} |x^\beta|}{\beta!} \right) = C_{S_i} \|f_i\|_A.
    \end{align*}
    Since the total norm is $\|f\|_A = \sum_{j=1}^m \|f_j\|_A = 1$, the individual norms $\|f_i\|_A$ are at most 1. This gives the uniform bound $f_{i,\max} \le C_{S_i}$. Substituting this bound into the result of \Cref{thm:gmp_convergence}, we obtain
    $$
    0 \le \pop^* - \pop_\ell^* \le \kappa' \ell^{-\theta}
    $$
    which, by \Cref{lem:hausdorff_conv}, implies the result
\end{proof}

The above result demonstrates that the cone of pseudo-moments $L_\ell^{(k)}$ converges to the cone of truncated moments of measures $\O^{(k)}$ with the same rate of convergence, $\theta$, as the optima, as $\ell \rightarrow \infty$ with $k$ is fixed.
That is, the pseudo-moments up to order $k$ of an optimum solution of the GMP $\pop_\ell^*$ are close to the moments of a measure for large enough $\ell$.

\brev{} We lastly establish the convergence of the optimizers of $\pop^*_\ell$ to the optimizer of $\pop^*$. Under the assumption that $\pop^*$ has a unique minimizer $\mu^*$, we show that sequences of canonical optimizers $(\lambda_\ell^*)$ (see \Cref{rem:canonical_sol}) converges to $\mu^*$ in the weak$^*$ topology. We require the following boundedness lemma.

\begin{lemma} 
\label{lem:uniform_bdd}
    Let Assumptions \ref{as:x-full} and \ref{as:archimedean} be satisfied. Let $(\lambda_\ell^*)$ be a sequence of canonical 
    optimizers for the truncated problems $(\pop_\ell^*)$. The sequence is uniformly bounded. That is, there exists a constant $\rho > 0$ such that $\|\lambda_\ell^*\| \le \rho$ for all $\ell$. 
\end{lemma}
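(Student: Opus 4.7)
The plan is to bound each component $\|\lambda_{\ell,i}^*\|_i$ uniformly in $\ell$ and then sum. The argument proceeds in three natural steps: bound the total mass via $S$-fullness, propagate the bound to every moment using the Archimedean structure, and exploit the factorial damping built into the weighted norm.

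First I bound the total mass $\apply{\lambda_{\ell,i}^*}{1}$. Let $w \in K^*$ be as in \Cref{as:x-full} and set $b := \mult{h}{w}$, so $b_i \ge b_{i,\min} > 0$ on $S_i$. For $\ell \ge \max_{j \in \textup{supp}(w)} \deg(h_j)$ the support of $w$ lies in $J_\ell$ and its restriction belongs to $(K^\ell)^*$; pairing it against the constraint $t^\ell - \apply{\lambda_\ell^*}{h^\ell} \in K^\ell$ yields $\apply{\lambda_\ell^*}{b} \le \mult{t}{w}$, exactly as in the proof of \Cref{lem:compactness_p_star}. Because $b_i - b_{i,\min}/2$ is strictly positive on $S_i$, Putinar's Positivstellensatz (valid under \Cref{as:archimedean}) places it in some $\cl Q_{\ell_1}(g_i)$; hence $\apply{\lambda_{\ell,i}^*}{1} \le M_i := 2\mult{t}{w}/b_{i,\min}$ for $\ell \ge \ell_1$.

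Next I lift this bound to every moment. As the paper notes directly after \Cref{as:archimedean}, one may assume without loss of generality that $R_i^2 - \|x\|^2$ is itself one of the generators $g_{i,j}$. Then $(R_i^2 - \|x\|^2) x^{2\alpha} \in \cl Q_{2+2|\alpha|}(g_i)$, and applying $\lambda_{\ell,i}^*$ for $\ell \ge 2+2|\alpha|$ gives $\apply{\lambda_{\ell,i}^*}{x^{2(\alpha+e_j)}} \le R_i^2 \apply{\lambda_{\ell,i}^*}{x^{2\alpha}}$ for each coordinate $j$. Iterating on $|\alpha|$ yields $\apply{\lambda_{\ell,i}^*}{x^{2\alpha}} \le R_i^{2|\alpha|} M_i$ for $|\alpha| \le \lfloor \ell/2 \rfloor$. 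Positive semidefiniteness of the moment matrix $M_{\lfloor \ell/2 \rfloor}(\lambda_{\ell,i}^*)$, which follows from $\lambda_{\ell,i}^* \in \cl L_\ell(g_i)$, combined with Cauchy--Schwarz then extends this to $|\apply{\lambda_{\ell,i}^*}{x^\gamma}| \le R_i^{|\gamma|} M_i$ for every $\gamma = \alpha + \beta$ with $|\alpha|, |\beta| \le \lfloor \ell/2 \rfloor$. Canonicality zeroes moments of degree strictly greater than $\ell$; the boundary degree $\ell$ (only an issue when $\ell$ is odd) is handled by supplementing with the localizing matrix of $R_i^2 - \|x\|^2$, or equivalently by picking the canonical optimizer with those otherwise unconstrained boundary moments set to zero, yielding $|\apply{\lambda_{\ell,i}^*}{x^\gamma}| \le R_i^{|\gamma|} M_i$ for every $\gamma \in \mathbb{N}^{n_i}$.

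Finally these bounds feed into the weighted norm. Using the coordinatewise factorization $R_i^{|\alpha|}/\alpha! = \prod_{j=1}^{n_i} R_i^{\alpha_j}/\alpha_j!$ together with $\sup_{k \in \mathbb{N}} R_i^k/k! \le e^{R_i}$,
\[
\|\lambda_{\ell,i}^*\|_i = \sup_{\alpha \in \mathbb{N}^{n_i}} \frac{|\apply{\lambda_{\ell,i}^*}{x^\alpha}|}{\alpha!} \le M_i \prod_{j=1}^{n_i} \sup_{k \in \mathbb{N}} \frac{R_i^k}{k!} \le M_i e^{n_i R_i},
\]
and summing over $i$ yields the claimed uniform constant $\rho := \sum_{i=1}^m M_i e^{n_i R_i}$. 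The hardest part is closing the Archimedean iteration all the way to the truncation degree: the WLOG reduction placing $R_i^2 - \|x\|^2$ directly among the generators achieves this tightly for even $\ell$, while a minor bookkeeping step handles the odd-$\ell$ boundary without degrading the uniformity of the constant.
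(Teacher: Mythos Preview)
Your argument is correct and takes a genuinely different, more elementary route than the paper. The paper's proof is indirect: it bounds $\O$ and hence each $\O^{(\ell)}$ via \Cref{lem:boundeness_measure} and \Cref{lem:compactness_p_star}, then invokes the Hausdorff estimate \Cref{thm:haus_dis} with $k=\ell$ to conclude that any canonical optimizer in $L_\ell^{(\ell)}$ lies within $\kappa'\ell^{-\theta}$ of the bounded set $\O^{(\ell)}$. You instead argue directly: $S$-fullness plus a single qualitative Putinar application to $b_i-\tfrac12 b_{i,\min}$ bounds the mass $\apply{\lambda_{\ell,i}^*}{1}$; the Archimedean generator $R_i^2-\|x\|^2$ drives an induction bounding all pure even moments by $R_i^{2|\alpha|}M_i$; Cauchy--Schwarz on the truncated moment matrix extends this to every moment up to degree $2\lfloor\ell/2\rfloor$; and the factorial weights in $\|\cdot\|_i$ absorb the geometric growth $R_i^{|\gamma|}$. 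This route avoids the effective Positivstellensatz altogether and does not rely on \Cref{as:dual_optimizer}, which the paper's proof implicitly uses through \Cref{thm:haus_dis} even though the lemma itself does not list it.

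One detail is not quite closed. For odd $\ell$ with all generators of even degree (as in your WLOG reduction), $\cl Q_\ell(g_i)\subset\rng_{i,\ell-1}$, so $\cl L_\ell$ places no constraint whatsoever on degree-$\ell$ moments. Neither of your two patches fully handles this: the localizing matrix of $R_i^2-\|x\|^2$ at level $\ell$ still only reaches moments of degree $\le\ell-1$, and an $h_j$ of degree exactly $\ell$ in $J_\ell$ can obstruct freely zeroing those boundary moments. The cleanest remedy is to note that the issue vanishes for even $\ell$ and restrict the hierarchy accordingly (standard practice and lossless since $\cl Q_\ell=\cl Q_{\ell-1}$ here), or else absorb the exceptional levels into the final constant $\rho$ together with the finitely many $\ell<\ell_1$, which you should also make explicit.
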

\begin{proof}
    Let us first prove that $\O \subset B_{\rho}(\cl D^{\bnd})$ for some $\rho>0$.
    Let $\mu \in \O$. 
    By \Cref{lem:boundeness_measure} and \eqref{eq:kappa} in the proof of \Cref{lem:compactness_p_star}, and with $R$ such that $S_i \subset B_{R}(\RR^{n_i})$, we have 
    $$
    \norm{\mu} \le \sup \frac{R^{|\a|}}{\a!} \sum_i \apply{\mu_i}{1} 
    = \sup \frac{R^{|\a|}}{\a!} \frac{t\cdot\omega}{b_{\min}} \BYDEF \rho'.
    $$
    Consider now its $k$-truncation, $\mu^{(k)}$. By definition, 
    \begin{equation}\label{eq:uniform_bdd_1}
    \|\mu^{(k)}\| = \sup_{\a} \frac{|\apply{\mu^{(k)}}{\bm x^\a}|}{\a!} \le \sup_{\a} \frac{|\apply{\mu}{\bm x^\a}|}{\a!} = \|\mu\| \le \rho'.
    \end{equation}
    Thus, for any $k \in \mathbb{N}$, the set $\O^{(k)}$ is also contained in the same ball $B_{\rho'}(\cl D^{\bnd})$.
    
    Let $\bar{\ell}$ be the minimum $\ell$ such that \Cref{thm:haus_dis} holds. By \Cref{thm:haus_dis} with $k=\ell \ge \bar{\ell}$, we have
    $$
    \rho_H(\O^{(\ell)}, L^{(\ell)}_\ell) \le \kappa' \ell^{-\theta}.
    $$
    By definition of the Hausdorff distance, 
    $$
    \min_{\mu \in \O^{(\ell)}} \|\lambda_\ell^{*} - \mu\| \le \kappa' \ell^{-\theta}.
    $$
    By the weak$^*$ compactness of $\O$ and the continuity of its truncation to $\O^{(k)}$, there exists an element $\mu_\ell \in \O^{(\ell)}$ such that $\|\lambda_\ell^{*} - \mu_\ell\| \le \kappa' \ell^{-\theta}$.
    By \eqref{eq:uniform_bdd_1} and triangle inequality, we have
    $$
    \|\lambda_\ell^{*}\| \le \|\lambda_\ell^{*} - \mu_\ell\| + \|\mu_\ell\| \le \kappa' \ell^{-\theta} + \rho' \le \bar{\ell}^{-\theta} + \rho' \BYDEF \rho''.
    $$
    since the term $\ell^{-\theta}$ is decreasing with $\ell > 0$. 
    Thus, the entire sequence of optimizers $(\lambda_\ell^*)_{\ell}$ is contained in $B_{\rho}(\cl D^{\bnd})$ where $\rho = \max \{ \norm{\lambda^*_1}, \norm{\lambda^*_2}, \dots, \norm{\lambda^*_{\bar{\ell}-1}}, \rho'' \}$.
\end{proof}
We can now prove the optimizer convergence theorem.
\begin{theorem}\label{thm:unique opt}
    Let Assumptions \ref{as:x-full}, \ref{as:dual_optimizer}, and \ref{as:archimedean} be satisfied. Assume that problem $\pop^*$ has a unique minimizer $\mu^* \in \O$. Let $(\lambda^*_\ell)$ be a sequence of canonical 
    optimizers for the truncated problems $(\pop_\ell^*)$. Then, the sequence $(\lambda_\ell^*)$ converges to $\mu^*$ in the weak* topology.   
\end{theorem}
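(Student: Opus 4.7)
The plan is a compactness-identification-uniqueness argument, completed by the standard subsequence trick. By \Cref{lem:uniform_bdd}, the sequence $(\lambda_\ell^*)$ is uniformly bounded in $\cl D^\bnd$ by some $\rho > 0$. In particular, for each component $i$ and each multi-index $\alpha$, the scalar sequence $\apply{\lambda_{\ell,i}^*}{\bm x_i^\alpha}$ is bounded by $\rho\,\alpha!$. Since the set of monomials across all components is countable, a diagonal Bolzano--Weierstrass extraction produces a subsequence $(\lambda_{\ell_k}^*)$ whose moments all converge pointwise; defining $\tilde{\mu} \in \cl D^\bnd$ by these limits and extending linearly yields $\lambda_{\ell_k}^* \to \tilde{\mu}$ in the weak$^*$ topology of $\cl D$.

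I would then identify $\tilde{\mu}$ with $\mu^*$ by verifying three properties. For positivity, any $p \in \cl Q(g_i)$ admits a representation in some finite-degree truncation $\cl Q_L(g_i)$, so $p \in \cl Q_\ell(g_i)$ for all $\ell \ge L$; since each $\lambda_{\ell,i}^* \in \cl L_\ell(g_i)$, we have $\apply{\lambda_{\ell_k,i}^*}{p} \ge 0$ eventually, and passing to the limit gives $\apply{\tilde{\mu}_i}{p} \ge 0$. Hence $\tilde{\mu} \in \cl L$, and under \Cref{as:archimedean} Putinar's Positivstellensatz identifies $\cl L$ with $\cl M_+$, so $\tilde{\mu}$ is in fact a vector of positive Borel measures on the $S_i$. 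For feasibility, fix $j \in J$: once $\ell$ is large enough that $j \in J_\ell$, the $j$-th coordinate of the constraint $t^\ell - \apply{\lambda_\ell^*}{h^\ell} \in K^\ell$ reads $t_j - \apply{\lambda_\ell^*}{h_j}$, and along $(\ell_k)$ this converges to $t_j - \apply{\tilde{\mu}}{h_j}$; since $K^\ell$ is the projection of $K$ onto coordinates in $J_\ell$ and $K$ is closed in the product topology on $\RR^J$, the coordinate-wise limits lie in $K$, giving $t - \apply{\tilde{\mu}}{h} \in K$. For optimality, for $\ell_k \ge \deg(f)$ we have $\apply{\lambda_{\ell_k}^*}{f} = \pop_{\ell_k}^*$, which tends to $\pop^*$ by \Cref{cor:gmp_convergence}; simultaneously the left-hand side tends to $\apply{\tilde{\mu}}{f}$, so $\apply{\tilde{\mu}}{f} = \pop^*$.

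Since $\mu^*$ is the unique minimizer of $\pop^*$, we conclude $\tilde{\mu} = \mu^*$. The same reasoning applied to any subsequence of $(\lambda_\ell^*)$ shows that every subsequence admits a further subsequence weak$^*$-converging to $\mu^*$. Because the sequence is uniformly bounded in $\cl D^\bnd$ and the weak$^*$ topology on such a bounded set is determined by the countable family of monomial evaluations, it is metrizable there, so the subsequence criterion forces $\lambda_\ell^* \to \mu^*$ in the weak$^*$ topology of $\cl D$, which is the desired conclusion.

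The main obstacle I anticipate is the positivity step: ensuring that the pointwise limit of truncated positive functionals is represented by a genuine positive Borel measure, rather than merely a non-negative functional on polynomials. This relies essentially on \Cref{as:archimedean} via Putinar's Positivstellensatz, combined with the monotonicity $\cl Q_\ell(g_i) \subset \cl Q_{\ell+1}(g_i)$ used to transfer finite-level positivity to the full quadratic module. The feasibility step is comparatively routine but requires keeping track of the filtration $J_\ell \uparrow J$ and invoking closedness of $K$ in the product topology on $\RR^J$.
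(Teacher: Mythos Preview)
Your proposal is correct and follows essentially the same three-part strategy as the paper's proof: extract a weak$^*$ convergent subsequence from the uniformly bounded sequence (you do this by diagonal Bolzano--Weierstrass on monomials, the paper invokes Banach--Alaoglu), identify the limit as a feasible optimal point of $\pop^*$ via the same positivity/feasibility/optimality checks, and conclude full-sequence convergence from uniqueness of $\mu^*$. The only differences are cosmetic---your treatment of metrizability in the last step and of the lift from $K^\ell$ to $K$ in the feasibility step is in fact slightly more explicit than the paper's.
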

\begin{proof}
The proof is in three parts.

\noindent\textbf{1. Existence of a convergent subsequence of $(\lambda_\ell^*)$.}
By \Cref{lem:uniform_bdd} and the Banach-Alaoglu theorem, the sequence of optimizers $(\lambda_\ell^*)$ is contained within a weak$^*$ compact ball $B_\rho(\cl D^{\bnd})$. Therefore, there exists a subsequence, which we re-index as $(\lambda_\ell^*)$, and a limit point $\tilde{\mu} \in B_\rho(\cl D^{\bnd})$ such that $\lambda_\ell^* \xrightarrow{w^*} \tilde{\mu}$.

\noindent\textbf{2. Feasibility and optimality of $\lambda_\ell^*$ in \eqref{eq:relax_obj}.}
    To show $\tilde{\mu} \in \cl L$, 
    let $q \in \cl Q(g_i)$. For $\ell \ge \deg(q)$, the feasibility of $\lambda_{\ell,i}^*$ implies $\apply{\lambda_{\ell,i}^*}{q} \ge 0$. By weak$^*$ convergence, the limit is non-negative: 
    $$\lim_{\ell \to \infty} \apply{\lambda_{\ell,i}^*}{q} = \apply{\tilde{\mu}_i}{q} \ge 0.$$
    Since $q$ was arbitrary, we have $\tilde{\mu}_i \in \cl L(g_i)$, and thus $\tilde{\mu} \in \cl L$.

    To show $t - \apply{\tilde{\mu}}{h} \in K$, note that the feasibility $\lambda_\ell^*$ implies that $t^\ell - \apply{\lambda_\ell^*}{h^\ell} \in K^\ell$. 
    By definition of $K^\ell$, there exists $z_\ell \in K$ such that  $z_{\ell,j} = t_j - \apply{\lambda_\ell^*}{h_j}$ for all $j \in J_\ell$. 
    For a fixed $j \in J$ we have by the weak$^*$ convergence of $\l_\ell^*$ that
    $$
    \lim_{\ell \rightarrow \infty} z_{\ell, j} = \lim_{\ell \rightarrow \infty} t_j - \apply{\lambda_\ell^*}{h_j} = t_j - \apply{\tilde{\mu}}{h_j}
    $$
    Since this holds for any $j \in J$, $(z_\ell)$ converges to $t - \apply{\tilde{\mu}}{h}$ in the product topology. The result follows by the closure property of $K$.    

    Finally, to show optimality, note that from \Cref{cor:gmp_convergence}, we have  $\lim_{\ell \to \infty} \pop_\ell^* = \pop^*$. By definition, $\pop_\ell^* = \apply{\lambda_\ell^*}{f}$. Taking the limit along a convergent subsequence, we have
    $$
    \apply{\tilde{\mu}}{f} = \lim_{\ell \to \infty} \apply{\lambda_\ell^*}{f} = \lim_{\ell \to \infty} \pop_\ell^* = \pop^*.
    $$
    Since $\tilde{\mu}$ is feasible and attains the minimum, it is optimal.

\noindent \textbf{3. Convergence of the full sequence.}
We have shown that the sequence $(\lambda_\ell^*)$ lies in a compact set and that any of its limits must be a minimizer of $\pop^*$. The minimizer is unique by assumption, so the sequence has only one limit point: $\mu^*$. Therefore, the entire sequence $(\lambda_\ell^*)$ converges to $\mu^*$.
\end{proof}
\erev

\section{Application to symmetric tensor decomposition}\label{sec:applications}

The tensor decomposition problem consists in decomposing a tensor of order $d$ into a weighted sum of a minimal number of tensor products of $d$ vectors. When the tensor is symmetric (i.e. invariant by permutation of the indices), the symmetric decomposition is the weighted sum of a minimal  number of tensor products of the same vector.
This can be cast into a generalized moment problem on polynomials as follows.

We identify symmetric tensors $F\in S^{d}(\RR^{n+1})$ of order $d$, with polynomials of degree $\le d$ in $n$ variables: $F(x_1, \ldots, x_n)=F(\bm x)= \sum_{|\a|\le d} F_{\a} \bm x^{\a}$ where $F_{\a}$ are the coefficients of the symmetric tensor $F$ in the basis $(\bm e_0^{d-|\a|} \bm e_1^{\a_1} \cdots \bm e_n^{\a_n})_{|\a| \le d}$ of $S^{d}(\RR^{n+1})$ and $(\bm e_0, \bm e_1, \ldots, \bm e_n)$ is the canonical basis of $\RR^{n+1}$. 

Decomposing the symmetric tensor $F$ consists in writing $F(\bm x)$ as a weighted sum of a minimal number $r$ of $d^{\mathrm{th}}$ power of degree-$1$ polynomials:
$$
F(\bm x) = \sum_{i=1}^{r} \omega_i\, (\xi_{i,0} + \xi_{i,1} x_1 + \cdots + \xi_{i,n} x_n)^d
$$
with $\omega_i, \, \xi_{i,j} \in \RR$.
Hereafter, we are considering decompositions with $\xi_{i,0} \ne 0$ (which we can achieve by a generic change of coordinates in $\RR^{n+1}$). By scaling, we can then assume that $\xi_{i,0}=1$ and the decomposition we are considering is of the form 
\begin{equation}\label{eq:decomp}  
F(\bm x) = \sum_{i=1}^{r} \omega_i\, (1 + \apply{\xi_i}{\bm x})^d
\end{equation}
with $\omega_i \in \RR$, $\xi_i=(\xi_{i,1}, \ldots, \xi_{i,n}) \in \RR^n$. Since the points $\xi_i$ of such a decomposition are inherently bounded, we, without loss of generality, rescale the problem so that they lie on the unit ball: $S = \{ x \in \RR^n \mid 1 - \norm{x}^2 \ge 0\}$.

To transform the tensor decomposition problem into a moment problem, we introduce the \emph{apolar product}: $\forall f= \sum_{|\a|\le d} f_\a \bm x^{\a}$, $g= \sum_{|\a|\le d} g_\a \bm x^{\a} \in \rng_{\le d}$, 
$$
\apply{f}{g}_d = \sum_{|\a|\le d} \binom{d}{\a}^{-1} f_{\a}\,g_{\a}
$$
with $\binom{d}{\a}= \frac {d!}{ (d-|\a|)! \a_1! \cdots \a_n!}$. 
We directly verify that $\forall g \in \rng_{\le d}, \forall \ \xi \in \RR^{n}$,
\begin{equation}\label{eq:apolar}
\apply{(1 + \apply{\xi}{\bm x})^d}{g}_d= \apply{\delta_{\xi}}{g} = g(\xi) 
\end{equation}
where $\delta_{\xi} \in \cl M(S)$ is the Dirac measure at $\xi\in S$.

Therefore, $F$ has a decomposition of the form \eqref{eq:decomp} if and only if $\Lambda_{F}: p \in \rng_{\le d} \mapsto \apply{F}{p}_d\in \RR$ coincides with $\mu =\sum_{i=1}^{r} \omega_i\, \delta_{\bm\xi_i}$ on $\rng_{\le d}$.

\subsection{Positive symmetric tensor decomposition}

We consider first the case where $F$ admits a \emph{positive} decomposition of the form 

\begin{equation} \label{eq:decomp pos}
    F(\bm x) = \sum_{i=1}^{r} \omega_i (1+ \apply{\bm \xi_i}{\bm x})^{d} 
    \quad \textup{with } \omega_i\in \RR_{+} \textup{ and } \bm \xi_i \in S \subset \RR^n. 
\end{equation}

Here, we are searching for a positive measure $\mu= \sum_{i=1}^{r} \omega_i \delta_{\bm\xi_i} \in \cl M_{+}(S)$, which satisfies
$$
\apply{\mu}{\bm x^{\a}} = \apply{F}{\bm x^{\a}}_{d}= \binom{d}{\a}^{-1} F_{\a} \quad \textup{ for } |\a| \le d,
$$
where $ \binom{d}{\a}= \frac{d!}{(d-|\a|)! \a_1! \cdots \a_n!}$.
This yields a \emph{positive decomposition} of $F$, which can be reformulated into the following.
\par
\noindent{}{\bf Generalized Moment Problem for Positive Symmetric Tensor Decomposition:} For $F= \sum_{|\a|\le d} F_{\a}\, \bm x^{\a}$, solve the optimization problem
\begin{equation}\label{eq:tensordec_positive_mu}
        \begin{split}
            \ \  \textup{argmin}\ \  & \sharp(\mu) \\
            \ \ \ \ \textnormal{s.t.} \ \ &\apply{\mu }{\bm x^{\a}} = \binom{d}{\a}^{-1} F_{\a} \textup{ for } |\a|\le d,\\
            & \mu\in \cl M_+(S)
        \end{split}
\end{equation}
where $\sharp(\mu)$ is the number of points in the support of $\mu$. Since this objective function is hard to compute, we relax it into a linear cost function 
$\apply{\mu}{\Psi}$ where 
$\Psi \in \Sigma^2_{2d'}$  with $2d' > d$, 
which aims to encourage solutions where the underlying measure has small total mass, heuristically encouraging a low-rank decomposition. If we take $\Psi= \sum_{|\a|\le d'} \bm x^{2\a}$, we can check that $\apply{\l}{\Psi}$ is the nuclear norm (or sum of the singular values, or trace) of the semi-definite moment matrix $H_{\l}^{d',d'}$ of $\l$ in degree $(d',d')$. The nuclear norm is a well-known heuristic objective in the matrix rank minimization community \cite{fazel2002matrix}. It has been observed to produce low-rank solutions in practice, and induce an exact relaxation under the restricted isometry property \cite{recht2010guaranteed}.

The resulting primal generalized moment problem reads
\begin{equation}\label{eq:tensorde_ex}
        \begin{split}
            \ \  \textup{min}\ \  & \apply{\l}{\Psi}  \\
            \ \ \ \ \textnormal{s.t.} \ \ &\apply{\l}{\bm x^{\a}}=  \binom{d}{\a}^{-1} F_{\a} \textup{ for } |\a|\le d,\\
            & \l \in \cl M_+(S).
        \end{split}
\end{equation}
This program will be relaxed into a hierarchy of finite-dimensional pseudo-moment convex optimization programs denoted \eqref{eq:tensorde_ex}$_\ell$, replacing the constraint $\l\in \cl M_{+}(S)$ by $\l \in \cl L_\ell(S)$ for $\ell \ge 2d'$. 
The existence of a solution of the GMP is guaranteed by the existence of the decomposition \eqref{eq:decomp pos}.

The dual problem of \eqref{eq:tensorde_ex} is written in terms of polynomials as
\begin{equation}\label{eq:tensorpr_ex}
        \begin{split}
            \ \  \sup\ \  & \apply{F}{V}_{d}  \\
            \ \ \ \ \textnormal{s.t.} \ \ & V\in \rng_{\le d}, \\
            &\Psi - V \in \pos(S)
        \end{split}
\end{equation}
where $V=\sum_{|\alpha|\le d} v_{\alpha} \bm x^{\alpha}$ is the polynomial associated with the dual variables $v_{\a}$ and $\pos(S)$ is the cone of polynomials, which are non-negative on $S$.
By the property \eqref{eq:apolar} of the apolar product, we have $\apply{F}{V}_d = \sum_{i=1}^{r} \omega_i V(\xi_i)$.

\begin{proposition}\label{prop:attain}
Let $I_{\xi}=\{p \in \rng\mid p(\xi_i)=0 \textup{ for } i=1, \ldots, r\}$ be the vanishing ideal of the points $\xi_i$ of the decomposition \eqref{eq:decomp pos}. Assume that $\rng_{\le d}$ contains a basis $B$ of $\rng/I_{\xi}$. Then \Cref{as:x-full} ($S$-fullness) and  \Cref{as:dual_optimizer} (\eqref{eq:tensorpr_ex} attains its supremum) are satisfied.
\end{proposition}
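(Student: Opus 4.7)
The plan is to verify the two assumptions of the proposition separately. The first, Assumption~\ref{as:x-full} ($S$-fullness), is immediate: in the tensor decomposition GMP the moment constraints are indexed by $J = \{\alpha \in \NN^n : |\alpha| \le d\}$ with $h_\alpha = \bm x^\alpha$ and $K = \{0\}$, whence $K^{*} = \RR_0^J$; taking $w \in \RR_0^J$ supported only at $\alpha = 0$ with $w_0 = 1$ gives $\mult{h}{w} = 1 > 0$ on $S$.

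For dual attainment (Assumption~\ref{as:dual_optimizer}), I would use the hypothesis to obtain the direct sum decomposition $\rng_{\le d} = \mathrm{span}(B) \oplus (I_\xi \cap \rng_{\le d})$ together with the evaluation isomorphism $\mathrm{span}(B) \to \RR^r$, $V_B \mapsto (V_B(\xi_i))_i$ (the $\xi_i$ of a minimal decomposition being distinct). The dual objective $\apply{F}{V}_d = \sum_i \omega_i V(\xi_i)$ depends only on the $\mathrm{span}(B)$-component, so for any maximizing sequence $V_k = V_{B,k} + W_k$ the upper bound $V_{B,k}(\xi_i) = V_k(\xi_i) \le \Psi(\xi_i)$ holds, and positivity $\omega_i > 0$ combined with convergence of $\sum_i \omega_i V_{B,k}(\xi_i) \to \dop^{*}$ (finite) forces each coordinate bounded below; hence $V_{B,k} \to V_B^{*}$ up to a subsequence, with $\sum_i \omega_i V_B^{*}(\xi_i) = \dop^{*}$.

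The decisive step is to exhibit some $W^{*} \in I_\xi \cap \rng_{\le d}$ with $V_B^{*} + W^{*} \le \Psi$ on $S$. I would argue by contradiction via the Hahn-Banach separation theorem. Denote by $\Phi = \{V \in \rng_{\le d} : V \le \Psi \text{ on } S\}$ the dual-feasible set, and by $A = V_B^{*} + (I_\xi \cap \rng_{\le d})$ the affine set of candidate lifts. If $A \cap \Phi = \emptyset$, then since $\Psi \ge 1 > 0$ on $S$ puts $0 \in \mathrm{int}(\Phi)$, a separating linear functional $\lambda$ on $\rng_{\le d}$ exists. Finiteness of $\inf_A \lambda$ forces $\lambda$ to vanish on $I_\xi \cap \rng_{\le d}$; finiteness of $\sup_\Phi \lambda$ together with non-negativity of $\lambda$ on polynomials non-negative on $S$ identifies $\lambda$, via Riesz-Haviland on the compact set $S$, with integration against a positive Borel measure $\mu$. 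Evaluating along the maximizing sequence gives $\int V_{B,k} \, d\mu = \int V_k \, d\mu \le \sup_\Phi \lambda$, and passing to the limit yields $\int V_B^{*} \, d\mu \le \sup_\Phi \lambda$, contradicting the separation $\lambda(V_B^{*}) > \sup_\Phi \lambda$.

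The main obstacle is to make the separation genuinely strict, since the affine set $A$ lies entirely on the hyperplane where $\lambda$ is constant. I would address this by exploiting that $\Phi$ has non-empty interior through a perturbation of the maximizing sequence into $\mathrm{int}(\Phi)$: replacing $V_k$ by $V_k - \epsilon_k$ with $\epsilon_k \to 0^+$ (which lies in $\mathrm{int}(\Phi)$ since $\Psi \ge 1$ keeps the shift strictly feasible), so that the separation becomes strict in the limit and delivers the required contradiction, producing the dual optimizer $V^{*} = V_B^{*} + W^{*}$.
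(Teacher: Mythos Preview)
Your $S$-fullness verification is fine and matches the paper.

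For dual attainment, your approach diverges from the paper's and contains a genuine gap in the separation step. The paper restricts from the outset to the subproblem
\[
\sup\{\langle F,V\rangle_d : V\in\langle B\rangle,\ \Psi-V\in\pos(S)\},
\]
and shows directly that an optimizing sequence there is bounded: normalizing an unbounded $(V_k)$ yields $\tilde V\in\langle B\rangle$ with $\|\tilde V\|=1$, $-\tilde V\ge 0$ on $S$ (from $\Psi/\|V_k\|-\tilde V_k\ge 0$), and $\sum_i\omega_i\tilde V(\xi_i)=0$; since each $\omega_i>0$ and $\tilde V(\xi_i)\le 0$, all $\tilde V(\xi_i)=0$, so $\tilde V\in I_\xi\cap\langle B\rangle=\{0\}$, a contradiction. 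The resulting maximizer $V^*\in\langle B\rangle$ is feasible for the full problem, and the paper concludes by invoking $\rng_{\le d}=\langle B\rangle\oplus(I_\xi\cap\rng_{\le d})$ together with $\langle F,p\rangle_d=0$ for $p\in I_\xi\cap\rng_{\le d}$.

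Your route instead extracts a limit $V_B^*$ of the $\langle B\rangle$-components of a full-problem maximizing sequence and then tries to lift it to $\Phi$ by separating $A=V_B^*+(I_\xi\cap\rng_{\le d})$ from $\Phi$. But the very sequence you started with obstructs strict separation: with $V_k\in\Phi$ and $W_k\in I_\xi\cap\rng_{\le d}$ you have $V_B^*+W_k\in A$ and
\[
\|V_k-(V_B^*+W_k)\|=\|V_{B,k}-V_B^*\|\to 0,
\]
so $\mathrm{dist}(\Phi,A)=0$ and no strictly separating hyperplane exists. Any separating $\lambda$ gives $\lambda(V_k)=\lambda(V_{B,k})\to\lambda(V_B^*)=c$ with $\lambda(V_k)\le c$, which is consistent, not contradictory. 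The perturbation $V_k\mapsto V_k-\epsilon_k$ only yields $\lambda(V_k)-\epsilon_k\lambda(1)<c$; passing to the limit with $\epsilon_k\to 0$ recovers $c\le c$. What your argument would really need is that the projection of $\Phi$ onto $\langle B\rangle$ along $I_\xi\cap\rng_{\le d}$ is closed, equivalently that $\{D\in I_\xi\cap\rng_{\le d}:D\le 0\text{ on }S\}=\{0\}$, and this is not supplied by the hypotheses. (As a secondary point, Riesz--Haviland does not apply to a functional defined only on $\rng_{\le d}$; but your argument uses $\lambda$ purely as a linear functional, so the measure identification is unnecessary anyway.)
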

\begin{proof}
The $S$-fullness \Cref{as:x-full} is satisfied since $1= \bm x^{\bm 0}>0$ on $S$. By \Cref{lem:compactness_p_star} and \Cref{prop:slaters_d_star}, the values of \eqref{eq:tensorde_ex} and  \eqref{eq:tensorpr_ex} are equal. 
We consider the subproblem 
\begin{equation}\label{eq:dual on B}
\begin{split}
            \ \  \sup\ \  & \apply{F}{V}_{d}  \\
            \ \ \ \ \textnormal{s.t.} \ \ & V\in \vspan{B}, \, \Psi - V \in \pos(S)
\end{split}
\end{equation}
which is bounded by $\dop^*$. Assume, for contradiction, that an optimizing sequence $(V_k)$ for this subproblem is not bounded, i.e., $\|V_k\| \to \infty$ (where $\| \cdot\|$ is the apolar norm). Let $\tilde{V}_k = V_k / \|V_k\|$, so $\|\tilde{V}_k\| = 1$. By the Bolzano-Weierstrass theorem, there exists a convergent subsequence (re-indexed as  $\tilde{V}_k$) such that $\tilde{V}_k \to \tilde{V}$ with $\|\tilde{V}\| = 1$.
Since 
$\Psi/ \norm{V_k} - \tilde{V}_k \ge 0  \ \textup{on } S$ 
and $\norm{V_k}\to \infty$, we have 
$$ 
- \tilde{V} \ge 0 \ \ \textup{on } S, \textup{ and } \apply{F}{\tilde{V}}_d= \sum_{i=1}^{r} \omega_i \tilde{V}(\xi_i)= 0. 
$$
This implies that $\tilde{V}(\xi_i)=0$ for $i=1, \ldots, r$ and $\tilde{V}\in I_{\xi}$. As $B$ is a basis of $\rng/I_{\xi}$ and $\tilde{V}\in \vspan{B}$, we deduce that 
$\tilde{V}=0$, contradicting $\|\tilde{V}\|=1$.
Thus, the optimizing sequence $(V_k)$ must be bounded and has a convergent subsequence to $V^*$, which is a maximizer of \eqref{eq:dual on B}. 

Then it is also a maximizer of \eqref{eq:tensorpr_ex} since $\rng_{\le d}= \vspan{B}\oplus (I_{\xi} \cap \rng_{\le d})$ and
$\forall p \in (I_{\xi} \cap \rng_{\le d})$, $\apply{F}{p}_d = \sum_{i=1}^{r} \omega_i p(\xi_i)=0$.
Consequently, \eqref{eq:tensorpr_ex} attains its supremum at $V^*$. This concludes the proof.
\end{proof}
We deduce the following corollary:
\begin{theorem}\label{thm:tensordec_pos}
If $S= \{x \in \RR^n\mid 1-\norm{x}^2 \ge 0\}$, $F = \sum_{i=1}^{r} \omega_i (1+ (\xi_i,\bm x))^{d}$ with $\omega_i\in \RR_{+}$ {and} $\bm\xi_i \in S$ and the quotient algebra $\cl A_{\xi}= \rng/I_{\xi}$ by the vanishing ideal of the points $\xi$ admits a basis in degree $\le d$, then 
the optima $\pop^*$ (resp. $\pop^*_{\ell}$) of 
\eqref{eq:tensorde_ex} (resp. \eqref{eq:tensorde_ex}$_{\ell}$) 
and the optima $\dop^{*}$ (resp. $\dop^*_{\ell}$) of the dual problem \eqref{eq:tensorpr_ex}  (resp. \eqref{eq:tensorpr_ex}$_{\ell}$) are reached, $\pop^*=\dop^*$ and we have, for $\ell$ sufficiently large,
$$
0 \le \pop^* - \pop_\ell^* \le  \dop^* - \dop^*_\ell \le  \kappa\, \ell^{-2},
$$
with 
$$
\kappa \le  \gamma\, (2 \Psi_{\max} + \norm{V^*}_{1}),
$$ 
$\gamma$ given in  \Cref{thm:putinar_effective}, Item 4, 
$V^*= \sum_{|\a|\le d} v^*_{\a} \bm x^\a$ a maximizer of $\dop^*$,
$\norm{V^*}_1=$ $\sum_{|\a|\le d} |v^*_{\a}|$ and
$\Psi_{\max} = \max_{x\in S} \Psi(x)$.
\end{theorem}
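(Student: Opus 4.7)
The plan is to verify that the three structural assumptions \Cref{as:x-full}, \Cref{as:dual_optimizer}, and \Cref{as:archimedean} all hold in this tensor-decomposition setting, and then invoke \Cref{cor:gmp_convergence} specialized to the unit ball via \Cref{thm:putinar_effective}, Item~4. The $S$-fullness condition and the attainment of the dual supremum are precisely the content of \Cref{prop:attain}: the constant $1 = \bm x^{\bm 0}$ is one of the test monomials $\bm x^{\a}$, $|\a|\le d$, and is strictly positive on $S$; and the hypothesis that $\rng_{\le d}$ contains a basis of $\rng/I_{\xi}$ is exactly the input needed by \Cref{prop:attain} to extract a bounded optimizing sequence $V_k$ for \eqref{eq:tensorpr_ex} and pass to a maximizer $V^{*}$. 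The Archimedean condition is immediate, since with $g_{1} = 1-\norm{\bm x}^{2}$ the polynomial $R_{1}^{2}-\norm{\bm x}^{2}$ required by \Cref{as:archimedean} coincides with $g_{1}\in \cl Q(g_{1})$ for $R_{1}=1$.

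Next I would collect the attainment statements. Existence of $\pop^{*}$ and weak$^{*}$ compactness of the primal feasible set follow from \Cref{lem:compactness_p_star}; existence of $\pop_{\ell}^{*}$ and strong duality $\pop^{*}_{\ell}=\dop^{*}_{\ell}$ at each truncation level follow from \Cref{lem:compactness_relaxation}; the equality $\pop^{*}=\dop^{*}$ is \Cref{prop:slaters_d_star}; and attainment of $\dop^{*}$ at some $V^{*}\in \rng_{\le d}$ is \Cref{prop:attain}. Attainment of $\dop^{*}_{\ell}$ is a direct consequence of conic-SDP strong duality applied to the finite-dimensional relaxation, whose feasibility and boundedness transfer from the primal side.

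Then I would evaluate the constants appearing in \Cref{cor:gmp_convergence}. The Slater certificate is chosen canonically: let $w$ be the finitely supported sequence with $w_{j}=1$ at the single index $j$ corresponding to $\a = \bm 0$ and $w_{j}=0$ otherwise, so that $b = \mult{h}{w}= 1$ identically on $S$. This yields $(\mult{h}{w})_{\min}= 1$, $\mult{t}{w} = F_{\bm 0}$, and on the unit ball $|\bm x^{\a}|\le \norm{\bm x}^{|\a|}\le 1$ gives $h^{*}_{\max}\le 1$. With $f=\Psi$ one has $f_{\max}= \Psi_{\max}$ and $\norm{v^{*}}_{1}=\norm{V^{*}}_{1}$. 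Finally, \Cref{thm:putinar_effective}, Item~4, applied to $S=\{1-\norm{\bm x}^{2}\ge 0\}$ provides $\theta=2$ and a constant $\gamma$ polynomial in $\deg(p)$. Substituting into the expression for $\kappa$ in \eqref{eq:kappa theta} gives a bound of the claimed form $\kappa\,\ell^{-2}$, with $\kappa$ majorized by $\gamma\,(2\Psi_{\max}+\norm{V^{*}}_{1})$ after absorbing the harmless multiplicative constant $F_{\bm 0}\,\gamma^{\theta-1}$ into $\gamma$.

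The only genuinely nontrivial step is the attainment of the dual supremum, since without a bounded maximizer $V^{*}$ the quantity $\norm{V^{*}}_{1}$ entering the constant $\kappa$ would be meaningless; that obstacle is entirely resolved by the basis-in-degree-$\le d$ hypothesis through \Cref{prop:attain}. Everything else is a direct specialization of \Cref{cor:gmp_convergence} to the single-measure, equality-constrained instance on the Euclidean ball.
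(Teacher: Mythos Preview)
Your proposal is correct and follows essentially the same route as the paper: verify \Cref{as:archimedean} directly from $g_1=1-\norm{\bm x}^2$, invoke \Cref{prop:attain} for \Cref{as:x-full} and \Cref{as:dual_optimizer}, then apply \Cref{thm:gmp_convergence}/\Cref{cor:gmp_convergence} with $\theta=2$ from \Cref{thm:putinar_effective}, Item~4, using $|\bm x^{\a}|\le 1$ on the unit ball to control $h^*_{\max}$. Your treatment is in fact more explicit than the paper's about the choice of $w$ and the resulting constants; one small slip is that \Cref{lem:compactness_relaxation} concerns the full (untruncated) relaxation, not strong duality at level $\ell$, but that claim is not part of the theorem anyway.
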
 
\begin{proof}
Since  the unit ball $S$ is defined by $1 - \norm{\bm x}^2\ge 0$, the problem satisfies the Archimedean  \Cref{as:archimedean} trivially.  By \Cref{prop:attain},
\Cref{as:dual_optimizer} and \Cref{as:x-full} are satisfied.
We thus apply \Cref{thm:gmp_convergence} with the exponent $\theta = 2$ taken from \Cref{thm:putinar_effective}, and the relation $\max_{x \in S} |x^{\alpha}|\le 1$ for $|\a|\le d$ to conclude the proof.
\end{proof}

\begin{remark}
In \cite[Proposition 3.6]{Nie2014}, a condition of strict $S$-positivity is used to show the existence of a maximizer of \eqref{eq:tensorpr_ex}. It corresponds to the case where the basis $B$
contains the set of all monomials of degree $d$. The condition in \Cref{prop:attain} is less restrictive, and allows  more general decompositions of the form \eqref{eq:decomp pos}.
\end{remark}

\begin{remark}\label{rem:uniqueness}
In the case $\Psi\in \Sigma^2_{2d'}$ is generic (i.e. $\Psi\in  \Sigma^2_{2d'} \setminus \Theta$, for a subset $\Theta \subseteq \Sigma^2_{2d'}$ having zero Lebesgue measure) with $2d' = \ell$, it is possible to prove that there is a  unique degree $2d'$ truncated minimizer $(\l^*)^{(2d')}$ for $\lambda^*\in \cl L_{\ell}(S)$ any minimizer of  \eqref{eq:tensorde_ex}$_\ell$. We use the fact that the set of singular normal vectors of a convex body has a zero Lebesgue measure (\cite[Theorem 2.2.11]{schneider2013convex}). See also \cite[Proposition 5.2, (i)]{Nie2014} for the uniqueness of the optimum for moment sequences of measures truncated in degree $2d'$. 
\end{remark}

\begin{example}[$n=2$, $d=4$, $r=4$.]\label{example:pos}We consider the tensor of dimension $n+1 = 3$ and order $d=4$, corresponding to the derivative with respect to the first variable $x_0$ of the first example in \cite{brachat2010symmetric}, divided by 5:
\begin{equation*}
    \begin{split}
    F(\textbf{x}) = & \ 88518\,x_2^4 - 309888\,x_1x_2^3 + 483408\,x_1^2x_2^2 - 165888\,x_1^3x_2 + 166368\,x_1^4 \\
& - 23664\,x_0x_2^3 + 66528\,x_0x_1x_2^2 - 88992\,x_0x_1^2x_2 - 13824\,x_0x_1^3 \\
& + 4932\,x_0^2x_2^2 - 3456\,x_0^2x_1x_2 + 7632\,x_0^2x_1^2 \\
& + 144\,x_0^3x_2 - 96\,x_0^3x_1 + 38\,x_0^4.
    \end{split}
\end{equation*}
We dehomogenize $F$ by substituting $x_0=1$ to obtain a polynomial in variables $\bm x = (x_1, x_2)$. To transform the search space to the unit ball, we take a rescaling factor of $20$. We then solve the SDP relaxation \eqref{eq:tensorde_ex}$_\ell$ for the rescaled problem. We set the relaxation order to $\ell=12$ and use the nuclear norm heuristic as the objective, with $\Psi(\bm x) = \sum_{|\a|\le 6} \bm x^{2\a}$. The problem is solved using the Julia package \href{https://github.com/AlgebraicGeometricModeling/MomentPolynomialOpt.jl}{MomentPolynomialOpt.jl}. The solver returns a moment sequence, from which we extract the weights
$$
\omega = \begin{bmatrix}
  5.000000920403701 \\
  3.000000015538793 \\
  15.000004556660205 \\
  14.999994501024343
\end{bmatrix}
$$
along with the scaled support points $\Xi$:
$$
\Xi = \begin{bmatrix}
  1.0  &  1.0  &  1.0  &  1.0 \\
 -0.6 & 0.6  & -0.1  &  0.1 \\
 -0.15  & -0.65 &  0.15  &  0.15
\end{bmatrix}.
$$
By the appropriate re-scaling --- multiplication of second and third row of $\Xi$ by $20$, and division of $\omega$ by $5$ --- we observe the same decomposition as in \cite[Example 1]{brachat2010symmetric}. 
This example also shows that the hierarchy of relaxations \eqref{eq:tensorde_ex} is (numerically) exact, i.e. it achieves the optimum at a finite order ($\ell = 12$ here) of relaxation. This phenomenon has been observed for polynomial optimization problems, for which regularity or generality assumptions imply the SoS and moment exactness of the relaxation \cite{baldi:exactness:2024,nie_optimality_2014,Nie2014}. 
This example also shows that the polynomial bounds on the rate of convergence of Theorem \ref{thm:gmp_convergence} can be pessimistic.
\end{example}

\subsection{Real symmetric tensor decomposition}\label{sec:2d_tensorexample}
Let us consider now the case where $F$ admits a decomposition of the form 
\begin{equation} \label{eq:decomp real}
    F(\bm x) = \sum_{i=1}^{r} \omega_i (1+ \apply{\bm \xi_i}{\bm x})^{d} 
    \quad \textup{with } \omega_i\in \RR \textup{ and } \bm \xi_i \in S \subset \RR^n.
\end{equation}
We write $\mu = \mu_{+} - \mu_{-}$ where $\mu_+ =\sum_{\omega_i >0} \omega_i\, \delta_{\bm\xi_i} \in \cl M_{+}(S) $ and $\mu_{-} = \sum_{\omega_i<0} - \omega_i\, \delta_{\bm\xi_i} \in \cl M_{+}(S)$.
We obtain the following reformulation of the symmetric tensor decomposition for points in $S = \{x \in \RR^n \mid 1 - \norm{x}^2 \ge 0\}$.
\par
\noindent{}{\bf Generalized Moment Problem for Symmetric Tensor Decomposition:} For $F= \sum_{|\a|\le d} F_{\a}\, \bm x^{\a}$, solve the optimization problem
\begin{equation}\label{eq:tensordec_mu}
        \begin{split}
            \ \  \textup{argmin}\ \  & \sharp(\mu_+ + \mu_-) \\
            \ \ \ \ \textnormal{s.t.} \ \ &\apply{[\mu_+ , \mu_-]}{[\bm x^{\a}, - \bm x^{\a}]}=  \binom{d}{\a}^{-1} F_{\a} \textup{ for } |\a|\le d,\\
            & [\mu_+, \mu_-] \in \cl M_+(S)^2.
        \end{split}
\end{equation}
We replace the objective function by the proxy $\apply{\l_+}{\Psi_+}  +  \apply{\l_-}{\Psi_-}$, where $\Psi_+, \Psi_- \in \Sigma^2_{2d'}$  with $2d' > d$. Note that \Cref{as:x-full} cannot be satisfied, since both $\mathbf x^\alpha$ and $-\mathbf x^\alpha$ appear as columns in the matrix $h$. To circumvent this, we constrain the total variation: 
$$
\apply{\l_+}{1} + \apply{\l_-}{1} \le L, \ \ \ L \ge 0.
$$ 
This constraint corresponds to having a finite-weight  decomposition. We further impose that $L$ satisfies
\begin{equation}\label{eq:L_characterisation}
L > \sum_{i=1}^r |\omega_i| > 0
\end{equation}
to ensure the feasibility of the underlying measure from \eqref{eq:decomp real}.
The resulting primal generalized moment problem reads
\begin{equation}\label{eq:tensordec_real_ex}
        \begin{split}
            \ \  \textup{min}\ \  & \apply{\l_+}{\Psi_+}  +  \apply{\l_-}{\Psi_-} \\
            \ \ \ \ \textnormal{s.t.} \ \ &\apply{\l_+}{\bm x^{\a}} - \apply{\l_-}{\bm x^{\a}}=  \binom{d}{\a}^{-1} F_{\a} \textup{ for } |\a|\le d,\\
            & \apply{\l_+}{1} + \apply{\l_-}{1} \le L,\\
            & \l_+, \l_- \in \cl M_+(S).
        \end{split}
\end{equation}
This program will be relaxed into a hierarchy of finite-dimensional convex optimization programs, denoted by \eqref{eq:tensordec_real_ex}$_{\ell}$, by replacing the constraint $\l_+, \l_- \in \cl M_+(S)$ by $\l_+, \l_- \in \cl L_\ell(S)$ for $\ell\ge 2d'$.
The dual problem to \eqref{eq:tensordec_real_ex} is written as
\begin{equation}\label{eq:tensordec_real_ex_dual}
\begin{split}
    \ \  \textup{sup}\ \  & \langle F, V \rangle_d - Lu \\
    \ \ \ \ \textnormal{s.t.} \ \ &\Psi_+ - V  + u \in \pos(S),\\
    &\Psi_- + V + u \in \pos(S),\\
    & V \in \rng_{\le d}, \ u \in \RR_+.
\end{split}
\end{equation}
\begin{proposition}\label{prop:sup_attainment_reals}
The supremum in \eqref{eq:tensordec_real_ex_dual} is attained. 
\end{proposition}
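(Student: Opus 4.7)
The plan is to establish attainment by a direct compactness argument in the spirit of \Cref{prop:attain}: take an optimizing sequence, show it is bounded in $\rng_{\le d}\times \RR_+$, and pass to a limit which is feasible and achieves the supremum.

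First, I would verify that the feasible set of \eqref{eq:tensordec_real_ex_dual} is non-empty (take $V=0$ and $u=\max(\|\Psi_+\|_\infty,\|\Psi_-\|_\infty)$) and that the supremum is finite. The latter follows from a weak-duality calculation: for any primal feasible $(\l_+,\l_-)$ and dual feasible $(V,u)$, applying $\l_+$ to $\Psi_+-V+u$ and $\l_-$ to $\Psi_-+V+u$, summing, and using $\apply{\l_+-\l_-}{\bm x^\a}=\binom{d}{\a}^{-1}F_\a$ together with $\apply{\l_+}{1}+\apply{\l_-}{1}\le L$, yields $\apply{\l_+}{\Psi_+}+\apply{\l_-}{\Psi_-}\ge \apply{F}{V}_d-uL$. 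Primal feasibility holds by \eqref{eq:L_characterisation} (the measures $\mu_+,\mu_-$ extracted from \eqref{eq:decomp real} satisfy the total-mass constraint strictly), so the primal optimum and hence the dual supremum are finite.

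Next, let $(V_k,u_k)$ be an optimizing sequence and write $o_k\BYDEF \apply{F}{V_k}_d-Lu_k$, which converges to $\dop^*$. The key step is to show that $u_k$ is bounded. Suppose, for contradiction, that $u_k\to\infty$ along a subsequence. Set $\tilde V_k\BYDEF V_k/u_k$; dividing the two positivity constraints by $u_k$ gives the pointwise bounds $-1-\Psi_-/u_k\le \tilde V_k\le 1+\Psi_+/u_k$ on $S$. Since $\rng_{\le d}$ is finite-dimensional and the sup-norm on $S$ (which has non-empty interior) is a norm on it, the $\tilde V_k$ have uniformly bounded coefficients, and Bolzano--Weierstrass yields a subsequence with $\tilde V_k\to \tilde V\in \rng_{\le d}$ satisfying $|\tilde V|\le 1$ on $S$. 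Dividing the objective by $u_k$ gives $\apply{F}{\tilde V_k}_d=o_k/u_k+L\to L$, so $\apply{F}{\tilde V}_d=L$. But by the apolar identity \eqref{eq:apolar}, $\apply{F}{\tilde V}_d=\sum_{i=1}^r\omega_i\tilde V(\bm\xi_i)$, and this sum is bounded in absolute value by $\sum_i|\omega_i|<L$ using $\bm\xi_i\in S$, $|\tilde V|\le 1$ on $S$, and \eqref{eq:L_characterisation}; this contradicts $\apply{F}{\tilde V}_d=L$.

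With $u_k$ bounded, the constraints force $|V_k|\le \max(\Psi_+,\Psi_-)+u_k$ uniformly on $S$, so by the same equivalence-of-norms argument the coefficients of $V_k$ are bounded. Extracting a convergent subsequence $(V_k,u_k)\to(V^*,u^*)$ with $u^*\ge 0$, the closedness of $\pos(S)$ in $\rng_{\le d}$ (coefficientwise convergence preserves pointwise non-negativity on $S$) ensures $(V^*,u^*)$ is feasible, and continuity of the objective on $\rng_{\le d}\times\RR$ gives $\apply{F}{V^*}_d-Lu^*=\dop^*$. The main obstacle is precisely the boundedness argument for $u_k$: it is the strict gap $L>\sum_i|\omega_i|$ in \eqref{eq:L_characterisation} that drives the contradiction, and without it the dual supremum may well fail to be attained.
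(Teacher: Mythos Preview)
Your proof is correct and follows essentially the same approach as the paper: both hinge on the contradiction obtained from the strict gap $L>\sum_i|\omega_i|$ in \eqref{eq:L_characterisation} after normalizing by $u_k$. The paper organizes the argument in two steps (first showing attainment in $V$ for each fixed $u$, then bounding $u$), whereas you work directly with a joint optimizing sequence and additionally verify non-emptiness and finiteness via weak duality; the core compactness-by-contradiction reasoning is the same.
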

\begin{proof}
We prove this proposition in two steps:\\
\textbf{1. For fixed $u \in \RR_+$, the supremum over $V \in \rng_{\le d}$ is attained.}
Let $u \ge 0$ be fixed, and assume for contradiction that an optimizing sequence $(V_k)$ for the resulting problem is not bounded, that is, $\|V_k\| \to \infty$. Let $\tilde{V}_k = V_k / \|V_k\|$, so $\|\tilde{V}_k\| = 1$. By the Bolzano-Weierstrass theorem, there exists a convergent subsequence (re-indexed as $\tilde{V}_k$) such that $\tilde{V}_k \to \tilde{V}$ with $\|\tilde{V}\| = 1$.
The feasibility of $V_k$ implies that the following inequalities hold on $S$:
$$ \Psi_+ - V_k  + u\ge 0 \quad \text{and} \quad \Psi_- + V_k + u\ge 0. $$
Dividing by $\|V_k\|$ and letting $k \to \infty$, we obtain
$$ 
0 \ge \tilde{V} \ge 0 \quad \text{on } S, 
$$
since $\Psi_+/\|V_k\| \to 0$ and $\Psi_-/\|V_k\| \to 0$, and $u$ is fixed. Thus, the polynomial $\tilde{V}$ is identically zero on $S$. Since $S$ has a non-empty interior, $\tilde{V}$ must be the zero polynomial. In particular, $\|\tilde{V}\| = 0$, which is in contradiction with $\|\tilde{V}\| = 1$.
Therefore, the optimizing sequence $(V_k)$ must be bounded. Consequently, it has a convergent subsequence whose limit is a polynomial $V_u\in \rng_{\le d}$ which attains the supremum \eqref{eq:tensordec_real_ex_dual} for any fixed $u\in \RR_{+}$. 

\noindent\textbf{2. The optimizing sequence for $u \in \RR_+$ is bounded.}
 Assume for contradiction that an optimizing sequence $(u_k) > 0$ is not bounded, that is, $u_k \to \infty$. 
The feasibility of each pair $(u_k, V_{u_k})$ implies that the following inequalities hold on $S$:
$$ \Psi_+ - V_{u_k}  + u_k \ge 0 \quad \text{and} \quad \Psi_- + V_{u_k} + u_k \ge 0. $$
Dividing by $u_k$, we obtain
$$
\Psi_+/u_k  + 1 \ge \tilde{V}_k  \ge -\Psi_-/u_k - 1,
$$
where $\tilde{V}_k=  V_{u_k} / u_k$.
This shows that  $\norm{\tilde{V}_{k}}$ is a bounded sequence with respect to $k$. By the Bolzano-Weierstrass theorem, $(\tilde{V}_k)$ has a convergent subsequence, whose limit is denoted by $\tilde{V}$. Taking $k\rightarrow \infty$ along this subsequence, we have
\begin{equation}\label{eq:sup_attainment_reals_1}
1 \ge \tilde{V}(x)  \ge -1  \ \ \forall \, x \in S.
\end{equation}
Dividing by $u_k$ and taking $k\rightarrow \infty$ in the objective function, we obtain
$$
\langle F, \tilde{V} \rangle_d - L = 0.
$$
By properties of the apolar product \eqref{eq:apolar}, this can also be written as
\begin{equation}\label{eq:sup_attainment_reals_2}
\sum_{i=1}^{r^+} \omega^+_i \tilde{V}(\xi^+_i) - \sum_{j=1}^{r_-} \omega^-_j \tilde{V}(\xi^-_i) - L = 0,
\end{equation}
where $r^+$ and $r^-$ are the number of points $\{\xi_i^+\}_{i=1}^{r^+}$ (resp. $\{\xi_i^-\}_{j=1}^{r^-}$) in the decomposition  \eqref{eq:decomp real} such that $\omega_i = \omega_i^{+}>0$ (resp. $\omega_{i}= -\omega_{i}^{-}<0$).  Combining \eqref{eq:sup_attainment_reals_1} and \eqref{eq:sup_attainment_reals_2}, we have
$$
\sum_{i=1}^{r^+} \omega^+_i + \sum_{j=1}^{r_-} \omega^-_j \ge L.
$$
The above is in contradiction with \eqref{eq:L_characterisation}, thus, the optimizing sequence $(u_k)$ must be bounded. Since the boundedness of $u \in \RR_+$ enforces the boundedness of $V \in \rng_{\le d}$ via the feasibility constraints, the supremum of \eqref{eq:tensordec_real_ex_dual}  is attained.
\end{proof}

\begin{theorem}\label{thm:tensordec_real}
If $S= \{x \in \RR^n\mid 1-\norm{x}^2 \ge 0\}$, $F = \sum_{i=1}^{r} \omega_i (1+ (\xi_i,\bm x))^{d}$ with $\omega_i\in \RR$ {and} $\bm\xi_i \in S$ and $L> \sum_i |\omega_i|$, then 
the optima $\pop^*$ (resp. $\pop^*_{\ell}$) of 
\eqref{eq:tensordec_real_ex} (resp. \eqref{eq:tensordec_real_ex}$_{\ell}$) 
and the optima $\dop^{*}$ (resp. $\dop^*_\ell$) of the dual problems \eqref{eq:tensordec_real_ex_dual} (resp. \eqref{eq:tensordec_real_ex_dual}$_\ell$) are reached, 
$\pop^*=\dop^*$ and we have, for $\ell$ sufficiently large,
$$
0 \le \pop^* - \pop_\ell^* \le \dop^* - \dop^*_\ell \le \kappa\, \ell^{-2},
$$
with 
$$
\kappa \le  \gamma\, L\, (2 \Psi_{\max} + \norm{V^*}_{1} + u^{*}),
$$ 
$\gamma$ given in  \Cref{thm:putinar_effective}, Item 4, 
$(V^*, u^*)$ a maximizer of $\dop^*$, 
$\Psi_{\max}= $ \\ $ \max_{x\in S} \{\Psi_{+}(x), \Psi_{-}(x)\}$.
\end{theorem}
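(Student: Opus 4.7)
The strategy is to reduce \Cref{thm:tensordec_real} to a direct application of \Cref{cor:gmp_convergence}, which in turn requires verifying the three standing assumptions \Cref{as:x-full}, \Cref{as:dual_optimizer}, and \Cref{as:archimedean} for the GMP \eqref{eq:tensordec_real_ex}. Once these are in place, \Cref{lem:compactness_relaxation} and \Cref{prop:slaters_d_star} furnish primal attainment, dual attainment at finite level and strong duality $\pop^* = \dop^*$ for every $\ell$, and the quantitative rate comes from \Cref{thm:putinar_effective}, Item 4 (the unit ball), which supplies $\theta = 2$.

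\emph{Archimedean property.} Since the two copies of $S$ are cut out by the single polynomial $1 - \|x\|^2$, which is itself of the form $R^2 - (x_1^2+\cdots+x_n^2)$ with $R = 1$, the quadratic module $\cl Q(g_i)$ trivially contains the required element, so \Cref{as:archimedean} holds.

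\emph{$S$-fullness.} This is the crucial point to verify, and also the step I expect to present the main subtlety. As the authors themselves observe before \eqref{eq:tensordec_real_ex}, the presence of both $\bm x^\alpha$ and $-\bm x^\alpha$ as columns of the matrix $h$ coming from the equality constraints \emph{prevents} $S$-fullness in the absence of the total-variation bound. However, the added inequality $\apply{\lambda_+}{1} + \apply{\lambda_-}{1} \le L$ contributes a new column to $h$ whose two rows are both equal to the constant function $1$. Taking $w \in K^*$ to have zero coefficients on every equality-constraint coordinate and coefficient $1$ on the inequality coordinate yields $b = h \cdot w = (1, 1)$, which is strictly positive on both copies of $S$. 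Thus \Cref{as:x-full} is satisfied, and the hypothesis $L > \sum_i |\omega_i|$ in \eqref{eq:L_characterisation} guarantees that the feasible set remains non-empty (so that \Cref{as:non_empty_primal} is not invalidated). The dual optimum attainment \Cref{as:dual_optimizer} is precisely the content of the previously established \Cref{prop:sup_attainment_reals}.

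\emph{Specialization of the constants.} Having verified all three assumptions, \Cref{cor:gmp_convergence} gives $0 \le \pop^* - \pop_\ell^* \le \dop^* - \dop_\ell^* \le \kappa \ell^{-\theta}$ with $\theta = 2$ and $\kappa$ as in \eqref{eq:kappa theta}. It remains to evaluate the individual ingredients with the choice of $w$ made above: one has $t \cdot w = L$, while $(h_i \cdot w)_{\min} = 1$ for $i = 1,2$. The cost vector $f = (\Psi_+, \Psi_-)$ gives $f_{i,\max} \le \Psi_{\max}$. On the unit ball, $|\bm x^\alpha| \le 1$ for all $|\alpha| \le d$, so $h^*_{i,\max} \le 1$. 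Finally, writing the dual optimizer $v^*$ as the pair $(V^*, u^*)$ we have $\|v^*\|_1 = \|V^*\|_1 + u^*$. Substituting into \eqref{eq:kappa theta} yields the announced bound $\kappa \le \gamma L (2 \Psi_{\max} + \|V^*\|_1 + u^*)$, where $\gamma$ is the constant from \Cref{thm:putinar_effective}, Item 4, and $\ell$ has to exceed $\ell_0 = 2 n\, \deg(q)^{3/2}$ from the same item together with the degree constraint \eqref{eq:ell_bound}.

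\emph{Main obstacle.} The only non-mechanical step is the $S$-fullness verification, which requires recognizing that the total-variation bound is not a technical convenience but in fact the very ingredient that restores \Cref{as:x-full} in the signed decomposition setting; this also explains the role of the hypothesis $L > \sum_i |\omega_i|$ both for feasibility and (via \Cref{prop:sup_attainment_reals}) for dual attainment. Everything else is a direct substitution into the general machinery of Section \ref{sec:moment-SOS}.
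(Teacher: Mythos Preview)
Your proposal is correct and follows essentially the same route as the paper's own proof: verify \Cref{as:archimedean} from the unit-ball description, verify \Cref{as:x-full} by placing the single nonzero entry of $w$ on the total-variation constraint, invoke \Cref{prop:sup_attainment_reals} for \Cref{as:dual_optimizer}, and then apply \Cref{cor:gmp_convergence} with $\theta=2$ and the bound $|x^\alpha|\le 1$ on $S$. Your write-up is in fact more explicit than the paper's in spelling out the specialization $t\cdot w = L$, $(h_i\cdot w)_{\min}=1$, and $\|v^*\|_1 = \|V^*\|_1 + u^*$.
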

\begin{proof}
The feasible set of the truncated moment problem is a nonempty (due to \eqref{eq:decomp real}), compact and convex set.
Since $S$ is the unit ball, the problem satisfies the Archimedean condition \Cref{as:archimedean} trivially.
\Cref{as:x-full} ($S$-fullness) is satisfied due to the total variation constraint: choose vector $w$ that is non-zero only on the entry corresponding to this constraint. 
\Cref{as:dual_optimizer} is satisfied by \Cref{prop:sup_attainment_reals}.
We thus apply \Cref{thm:gmp_convergence} with the exponent $\theta = 2$ taken from \Cref{thm:putinar_effective}, and \Cref{cor:gmp_convergence} and use the relation $\max_{x \in S} |x^{\alpha}|\le 1$ for $|\a|\le d$ to conclude the proof.
\end{proof}

\begin{remark}
One can prove the uniqueness of the truncation  $[(\l^*_+)^{(2d')},(\l^*_-)^{(2d')}]$ in degree $2d'$  for a minimizer $[\l^*_+,\l^*_-]$ of \eqref{eq:tensordec_real_ex}$_{\ell}$ for $\Psi_+$,  $\Psi_-$ \emph{generic} in  $\Sigma^2_{2d'}$, using that the set of singular normal vectors of the feasibility set of \eqref{eq:tensordec_real_ex}$_{\ell}$ has a zero Lebesgue measure (see \cite[Theorem 2.2.11]{schneider2013convex} and \Cref{rem:uniqueness}).
\end{remark}

\begin{example}[$n=3$, $d=4$, $r=7$.]\label{example:real}In this example, we consider the following symmetric tensor given by 
\begin{footnotesize}
\begin{equation*}
\begin{split}
&F(\textbf{x}) =
-0.2489979301598193 \, x_3^4 - 0.5714471586952264 \, x_2 x_3^3 - 0.5050309495726817 \, x_2^2 x_3^2   \\
&+ 0.20686591014734546 \, x_2^3 x_3 + 0.35848687448905797 \, x_2^4 + 0.6290084096294283 \, x_1 x_3^3   \\
&+ 1.5863914541246662 \, x_1 x_2 x_3^2 + 0.09102584099156069 \, x_1 x_2^2 x_3 - 0.7995202943157504 \, x_1 x_2^3 \\
& - 1.000984626080145 \, x_1^2 x_3^2 - 0.7330971731383541 \, x_1^2 x_2 x_3 + 0.48952697145340573 \, x_1^2 x_2^2 \\
&+ 0.3654972483140739 \, x_1^3 x_3 + 0.04180994666887122 \, x_1^3 x_2 - 0.06496525932745015 \, x_1^4   \\
&+ 0.8614765507794534 \, x_0 x_3^3 + 2.518149761933887 \, x_0 x_2 x_3^2 + 0.11198414551091718 \, x_0 x_2^2 x_3   \\
&- 1.2824172455614082 \, x_0 x_2^3 - 3.198429373199247 \, x_0 x_1 x_3^2 - 2.280206948221652 \, x_0 x_1 x_2 x_3  \\
&+ 1.51085249826687 \, x_0 x_1 x_2^2 + 1.8431809395581764 \, x_0 x_1^2 x_3 + 0.38190508335832307 \, x_0 x_1^2 x_2 \\ 
&- 0.6712764568310912 \, x_0 x_1^3- 2.2452396434195863 \, x_0^2 x_3^2 - 1.3839699114386361 \, x_0^2 x_2 x_3   \\
&+ 1.4125957148603887 \, x_0^2 x_2^2 + 1.4670698710382422 \, x_0^2 x_1 x_3- 0.015953647647009017 \, x_0^2 x_1 x_2   \\
&- 0.361365758387135 \, x_0^2 x_1^2 + 1.253344034777953 \, x_0^3 x_3 + 0.11846876725173883 \, x_0^3 x_2 \\
&- 2.1207614308184404 \, x_0^3 x_1 + 0.6141543193928954 \, x_0^4
\end{split}
\end{equation*} 
\end{footnotesize}
We dehomogenize $F$ by substituting $x_0=1$ to obtain a polynomial in variables $\bm x = (x_1, x_2, x_3)$. To transform the search space to the unit ball, we take a rescaling factor of $2$. We then solve the SDP relaxation \eqref{eq:tensorde_ex}$_\ell$ for the rescaled problem. We set the relaxation order to $\ell = 12$ and use the nuclear norm heuristic as the objective, with $\Psi(\bm x) = \sum_{|\a|\le 6} \bm x^{2\a}$. Solving the relaxation yields a moment matrix corresponding to a rank-$16$ decomposition.

However, by the Alexander-Hirschowitz Theorem (see e.g. \cite{brambilla2008alexander}), tensors of dimension $n + 1 = 4$ and order $d = 4$ have a generic rank $9$. In fact, this tensor has a rank $7$ decomposition, and we locate it exactly by solving \eqref{eq:tensorde_ex}$_\ell$ augmented with auxiliary constraints, which are introduced presently.

The degree 2 by degree 2 Hankel matrix $H^{2,2}$ also known as $(2,2)$-\emph{catalecticant} of $F$ \cite{pucci1998veronese}, fully determined by the coefficients of the tensor, is
\[
\scalebox{0.5625}{$
\begin{array}{c|cccccccccc}
& 1 & x_3 & x_2 & x_1 & x_3^2 & x_3x_2 & x_2^2 & x_3x_1 & x_2x_1 & x_1^2 \\
\hline
1 & 
0.614154 & 0.313336 & 0.0296172 & -0.53019 & -0.374207 & -0.115331 & 0.235433 & 0.122256 & -0.00132947 & -0.0602276 \\
x_3 & 0.313336 & -0.374207 & -0.115331 & 0.122256 & 0.215369 & 0.209846 & 0.00933201 & -0.266536 & -0.0950086 & 0.153598 \\
x_2 & 0.0296172 & -0.115331 & 0.235433 & -0.00132947 & 0.209846 & 0.00933201 & -0.320604 & -0.0950086 & 0.125904 & 0.0318254 \\
x_1 & -0.53019 & 0.122256 & -0.00132947 & -0.0602276 & -0.266536 & -0.0950086 & 0.125904 & 0.153598 & 0.0318254 & -0.167819 \\
x_3^2 & -0.374207 & 0.215369 & 0.209846 & -0.266536 & -0.248998 & -0.142862 & -0.0841718 & 0.157252 & 0.132199 & -0.166831 \\
x_3x_2 & -0.115331 & 0.209846 & 0.00933201 & -0.0950086 & -0.142862 & -0.0841718 & 0.0517165 & 0.132199 & 0.00758549 & -0.0610914 \\
x_2^2 & 0.235433 & 0.00933201 & -0.320604 & 0.125904 & -0.0841718 & 0.0517165 & 0.358487 & 0.00758549 & -0.19988 & 0.0815878 \\
x_3x_1 & 0.122256 & -0.266536 & -0.0950086 & 0.153598 & 0.157252 & 0.132199 & 0.00758549 & -0.166831 & -0.0610914 & 0.0913743 \\
x_2x_1 & -0.00132947 & -0.0950086 & 0.125904 & 0.0318254 & 0.132199 & 0.00758549 & -0.19988 & -0.0610914 & 0.0815878 & 0.0104525 \\
x_1^2 & -0.0602276 & 0.153598 & 0.0318254 & -0.167819 & -0.166831 & -0.0610914 & 0.0815878 & 0.0913743 & 0.0104525 & -0.0649653 \\
\end{array}$}
\]
By \eqref{eq:decomp real}, $F(\bm x)= \sum_{i=1}^r \omega_i (1 + \apply{\bm\xi_i}{\bm x})^d$ with $\bm \xi_i\in S$, or equivalently, $\mu_F = \sum_{i=1}^{r} \omega_i  \delta_{\bm \xi_i}$.
We verify that a polynomial $q$ of degree $\le 2$, vanishing at $\bm \xi_i$ is in the kernel of $H^{2,2}$: 
$$
\forall q' \in \rng_{\le 2}, \apply{q'}{H^{2,2}(q)} = \apply{\mu_F}{q\,q'} = \sum_{i=1}^{r} \omega_i q(\bm \xi_i) q'(\bm \xi_i)= 0.
$$
This implies that $q \in \ker H^{2,2}$.

 To constrain the solution of \eqref{eq:tensordec_real_ex}, assuming that all polynomials in $\ker H^{2,2}$ vanish at $\xi_i$\footnote{This is not always the case. It happens when $d$ is big enough compared to the regularity of the points $\bm\xi_1, \ldots, \bm\xi_r$. Specifically, when the Vandermonde $V_{k}$ in the Hankel-Vandermonde factorization $H^{k, l} = V_k\Delta V_{l}^T$ has full column rank.}, we add the constraints $\apply{\l_{\pm}}{q\, p}=0$ for $q\in \ker H^{2,2}$ and $p\in \rng_{\ell -d}$.
We arrive at the following augmented form:
\begin{equation}\label{eq:tensorde_ex_2}
        \begin{split}
            \ \  \textup{min}\ \  & \apply{\l_+}{\Psi_+} + \apply{\l_-}{\Psi_-}  \\
            \ \ \ \ \textnormal{s.t.} \ \ &\apply{\l_+}{\bm x^{\a}} -  \apply{\l_{-}}{\bm x^{\a}}= \binom{d}{\a}^{-1} F_{\a} \textup{ for } |\a|\le d,\\
            &\apply{\l_+}{q\, \bm x^{\beta}} =  0,  \ \ \ \forall \  q \in \ker H^{2,2}, |\b|\le \ell -2, \\\
            &\apply{\l_-}{q\, \bm x^{\beta}} =  0,  \ \ \ \forall \  q \in \ker H^{2,2}, |\b|\le \ell -2, \\
            & \apply{\l_+}{1} + \apply{\l_-}{1} \le L,\\
            & \l_+, \l_- \in \cl L_\ell(S).
        \end{split}
\end{equation}
for $2d' > d$. 

We keep $\Psi_+ = \Psi_- = \sum_{|\a|\le 6} \bm x^{2\a},$ and $\ell = 12$, and solve the above via Julia package \href{https://github.com/AlgebraicGeometricModeling/MomentPolynomialOpt.jl}{MomentPolynomialOpt.jl}. The following vector of weights $\omega$ is retrieved: 
\[
\scalebox{0.85}{$
\omega = \begin{bmatrix}
  -0.4424776570254827\\
-0.8445409705776897\\
-0.1434193311604589\\
0.27399137914217964\\
0.38387835648816826\\
0.8233082986137222\\
0.5634142439124565
\end{bmatrix}$}\]
along with the scaled support points $\Xi$:
\[
\scalebox{0.85}{$
\Xi = \begin{bmatrix}
  1.0    &    1.0     &   1.0    &    1.0   &       1.0    &    1.0     &    1.0  \\
0.637235 &  0.653722  & 0.460376 &  0.517252 &    0.728496 & -0.375355   & 0.457124 \\
-0.727603 &  -0.486279 & -0.255616 & -0.0368457 &  -0.995796 &  0.0720327 & -0.721705 \\
-0.023974  & -0.74095  &  0.726747 & -0.630254 &   -0.253459   & 0.158722 &  -0.141105 
\end{bmatrix}.
$}\]
The reconstructed homogeneous polynomial $\sum_{i=1}^{r} \omega_i \apply{\Xi[:,i]}{\mathbf{x}}^{d}$ corresponding to $\omega$, $\Xi$ differs from $F$
by $5.418\times 10^{-7}$ in the apolar norm. We have an exact relaxation at the order $\ell = 12$ up to numerical errors. Similar to the previous example, we observe that the relaxation achieves exactness at a finite order, and that the convergence rate analysis is overly conservative.
\end{example}

\section{Conclusion}\label{sec:conclusion}
Our primary contributions are twofold: First, we derived explicit polynomial convergence rates for the optimal values of the relaxations to the true optimum of the GMPs with countable moment constraints on vectors of measures, under dual optimum attainment, $S$-fullness and Archimedean conditions. These rates, detailed in \Cref{cor:gmp_convergence}, are geometry-adaptive, influenced by the underlying semi-algebraic sets through parameters obtained from effective Putinar's Positivstellensätze. Second, we extended this analysis to the convergence of the feasibility sets themselves. \Cref{thm:haus_dis} provides polynomial rates for the convergence in Hausdorff distance between the cone of truncated moment sequences of positive measures and the cone of pseudo-moments arising from the SoS relaxations.

We also demonstrated the applicability of our framework to minimal rank symmetric tensor decomposition. We formulated this problem as a GMP, validated the dual optimum attainment, $S$-fullness and Archimedean conditions, consequently deriving explicit error bounds (\Cref{thm:tensordec_pos} and \Cref{thm:tensordec_real}). While our experiments in \Cref{example:pos} and \Cref{example:real} show that the convergence rates can be conservative compared to the finite convergence observed in practice, they nonetheless provide valuable worst-case theoretical guarantees.

Promising future research directions include: (i) applying this framework to new domains, such as optimal control; (ii) identifying conditions for the existence of dual optimizers with application-specific a priori bounds; (iii) investigating the gap between the heuristic objective taken in the examples and the true objective; and (iv) characterizing conditions under which moment relaxations exhibit finite convergence at low orders.


\bibliographystyle{siamplain}
\bibliography{references}

\newpage
\appendix

\section{Linear programming proofs}\label{app:lp_proofs}

\medskip

\textit{Proof of \Cref{lem:compactness_p_star}. } 
Observe firstly that $\Omega$ is closed in $\cl M$. 
Let $\mu_n \xrightarrow{w} \mu$ with $\mu_n \in \Omega$ and $\mu \in \cl M$. We readily have that
$$
    \lim_{n\rightarrow \infty}  (t - \apply{\mu_n}{h}) = t - \apply{\mu}{h} \in K.
$$
since $K$ is closed in $\RR^J$.
Moreover, for any $q\in \Cp$, we have $\apply{\mu_n}{q}\ge 0$, thus $\apply{\mu}{q} = \lim_{n\rightarrow \infty}  \apply{\mu_{n}}{q} \ge 0$.
    The previous statements imply $\mu \in \Omega$, and so $\Omega$ is closed.
    
    The Banach-Alaoglu theorem (see e.g. \cite[Theorem 3.5.16]{ash2014measure}) implies that the closed unit ball ${B}_1(\cl M(S_i))$ in the space $\cl M(S_i)$ 
is weak$^*$ compact, as well as the ball $B_{\kappa}(\cl M(S_i))$ of radius $\kappa$ by rescaling. By Tychonoff's Theorem, the ball $B_{\kappa}(\cl M)$ is also weak$^*$ compact.

Let $b_{\min} \BYDEF \min_{i=1,\ldots,m}(\min_{x\in S_i} b_i(x)) > 0 $, since $b_i>0$ on $S_i$ and $S_i$ is compact. Then, since $w \in K^*$ and $\mu \in \Omega$, we have $\mult{(t - \apply{\mu}{h})}{w} \ge 0$. We deduce that
\begin{equation}\label{eq:t_dot_w}
\mult{t}{w} \ge \mult{(\apply{\mu}{h})}{w} = \apply{\mu}{\mult{h}{w}}= \apply{\mu}{b}
\ge  b_{\min} \sum_{i=1}^m \apply{\mu_i}{1}
\end{equation}
This implies that 
\begin{equation}\label{eq:kappa}
    \sum_{i=1}^m \apply{\mu_i}{1} \le \rho \BYDEF \frac{\mult{t}{w}}{b_{\min}}.
\end{equation}
The operator norm of a nonnegative measure is its mass, so we have $\|\mu\|_{\infty} \le \rho$, and $\mu \in B_{\rho}(\cl M)$. We deduce that $\Omega$ is a closed subset of the weak$^*$ compact set in ${B}_{\rho}(\cl M)$, and is therefore also weak$^*$ compact.
The fact that the value of $\pop^*$ is attained follows by the extreme value theorem. \QED

\textit{Proof of \Cref{prop:slaters_d_star}. }
    By taking the following series of substitutions, \eqref{eq:anderson_IP} can be realized as $-\dop^*$, the problem identical to \eqref{eq:gmp_dual_obj} except for the sign of the objective:\\
    \begin{tabular}{p{0.45\textwidth} p{0.45\textwidth}}
    \begin{enumerate}
        \item $v \rightarrow x$,
        \item $\mult{h}{v} \rightarrow Ax$,
        \item $t \rightarrow c $,
        \item $\sup \, \mult{t}{v} \rightarrow -\inf\, \mult{c}{-v}$,
    \end{enumerate}
    &
    \begin{enumerate}
        \setcounter{enumi}{4}
        \item $\Cp \rightarrow Q$,
        \item $K^* \rightarrow P$,
        \item $b \rightarrow -f$.
    \end{enumerate}
    \end{tabular}\\
    Specifically, we obtain
    \begin{equation*}
        \begin{split}
            -\dop^* = \ \ &\textnormal{inf} \ \ \mult{t}{v} \\
            &\textnormal{s.t.} \ \ \mult{h}{v} + f \in \Cp, \\
            &v \in K^*.
        \end{split}
    \end{equation*}
    Since $K$ is closed, $K^{**} = K$ (see e.g. \cite[p. 121]{rockafellar1997convex}, \cite[p. 53]{boyd2004convex}) and by the Riesz representation theorem $(\Cp)^* = \Mp$. It is then readily verified with reference to the above substitutions and  \eqref{eq:anderson_IP*} that the dual to the above problem is $-\pop^*$. The weak duality condition is satisfied: $-\dop^* \ge -\pop^*$, and hence by \Cref{lem:compactness_p_star}, $-\dop^*$ has a finite infimum. To satisfy \Cref{con:slaters}, it remains to locate a vector $v_0 \in K^*$ such that $\mult{h}{v_0} + f$ is in the interior of $\Cp$.
    By Assumption \ref{as:x-full}, there exists $w \in K^*$ such that $b = \mult{h}{w} \ge b_{\min} > 0$. 
    Since $f_i$ is bounded on $S_i$, we can find $\eta \in \RR_+$ such that 
    $$
    \eta (\mult{h}{w})_i + f_i > 0 \ \ \ \forall \ i = 1,\ldots, m,
    $$
    implying 
    that $v_0 = \eta w$ is the required vector. Thus, \Cref{con:slaters} holds for $-\dop^*$, implying $-\dop^* = -\pop^*$, or equivalently, $\dop^* = \pop^*$. \QED

\section{Dual of bounded functionals}\label{app:lem}
\begin{lemma}\label{lem:dual_analytic:app}
    The space $(\drng^{\bnd}_i)^*$ equipped with the norm $\norm{\cdot}^*_i$ is isometrically isomorphic to the space of real-analytic functions $A$ with norm $\norm{f}_A \BYDEF \sum_{\a \in \mathbb{N}^{n_i}} \a! | f_\a |$. 
\end{lemma}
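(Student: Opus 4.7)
The plan is to construct an explicit inverse map $\psi : A \to (\drng^{\bnd}_i)^*$ and check that $\phi$ and $\psi$ are mutually inverse isometries. For $g = \sum_\a g_\a \bm x^\a \in A$, define $\psi(g)(\lambda) := \sum_\a \lambda(\bm x^\a) g_\a$ for $\lambda \in \drng^{\bnd}_i$. The defining series converges absolutely, since
\begin{equation*}
\sum_\a |\lambda(\bm x^\a) g_\a| = \sum_\a \frac{|\lambda(\bm x^\a)|}{\a!} \cdot \a! |g_\a| \le \|\lambda\|_i \cdot \|g\|_A,
\end{equation*}
and this bound simultaneously shows $\psi(g) \in (\drng^{\bnd}_i)^*$ with $\|\psi(g)\|^*_i \le \|g\|_A$.

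The reverse norm inequality follows by testing $f$ against carefully chosen dual elements. For any $f \in (\drng^{\bnd}_i)^*$ and any finite subset $F \subset \mathbb{N}^{n_i}$, set $\lambda_F := \sum_{\a \in F} \operatorname{sgn}(f(\delta_\a)) \, \a! \, \delta_\a$. A direct computation gives $\|\lambda_F\|_i = 1$ and $f(\lambda_F) = \sum_{\a \in F} \a! |f(\delta_\a)|$, whence $\sum_{\a \in F} \a! |f(\delta_\a)| \le \|f\|^*_i$. Letting $F$ exhaust $\mathbb{N}^{n_i}$ yields $\phi(f) \in A$ with $\|\phi(f)\|_A \le \|f\|^*_i$, so $\phi$ is well defined and norm-contracting.

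It then remains to verify that $\phi$ and $\psi$ are mutually inverse. The identity $\phi \circ \psi = \operatorname{id}_A$ is immediate, as $\psi(g)(\delta_\a) = g_\a$. The main obstacle is the other direction $\psi \circ \phi = \operatorname{id}$, which amounts to showing $f(\lambda) = \sum_\a \lambda(\bm x^\a) f(\delta_\a)$ for every $f \in (\drng^{\bnd}_i)^*$ and every $\lambda \in \drng^{\bnd}_i$. This requires a density/continuity argument: the linear span of $\{\delta_\a\}$ is not norm-dense in $\drng^{\bnd}_i$ in general (intuitively, $\drng^{\bnd}_i$ is $\ell^\infty$-like while this span is $c_{00}$-like), so one must appeal to a coarser topology under which the partial sums $\lambda_F := \sum_{\a \in F} \lambda(\bm x^\a) \delta_\a$ converge to $\lambda$ in an admissible sense, or restrict to the closed subspace where this holds. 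Once this step is handled, combining the two norm bounds above gives $\|\phi(f)\|_A = \|f\|^*_i$ and $\|\psi(g)\|^*_i = \|g\|_A$, establishing the isometric isomorphism.
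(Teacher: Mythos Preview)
Your approach is essentially the paper's: both construct $\phi$ and its inverse (your $\psi$, the paper's $f_h$), prove the two norm inequalities $\|\psi(g)\|^*_i \le \|g\|_A$ and $\|\phi(f)\|_A \le \|f\|^*_i$ via the same estimates, and infer the isometry. The difference is that you explicitly flag the step $\psi\circ\phi=\mathrm{id}$ as the obstacle, whereas the paper in its step~5(a) simply asserts $f(\lambda)=\sum_\a f(\delta_\a)\,\lambda(\bm x^\a)$ without justification.

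Your diagnosis of that step is correct, and in fact the gap cannot be closed as the lemma is stated. The map $\lambda\mapsto(\lambda(\bm x^\a)/\a!)_\a$ is an isometric isomorphism from $\drng_i^\bnd$ onto $\ell^\infty(\mathbb{N}^{n_i})$, under which $A$ corresponds to $\ell^1$. Since $(\ell^\infty)^*$ strictly contains $\ell^1$---any Banach limit furnishes a nonzero $f\in(\drng_i^\bnd)^*$ with $f(\delta_\a)=0$ for every $\a$, hence $\phi(f)=0$---the map $\phi$ is not injective and the lemma is false as written. Your suggested remedies are exactly the right fixes: restrict to the norm-closure of $\mathrm{span}\{\delta_\a\}$ (the weighted-$c_0$ subspace, whose dual is indeed $A$), or read $(\drng_i^\bnd)^*$ as a predual. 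For the paper's downstream uses this is harmless, since measures on compact $S_i$ satisfy $|\mu(\bm x^\a)|/\a!\le R^{|\a|}\mu(S_i)/\a!\to 0$ and hence lie in that subspace; but the statement as given cannot be proved.
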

\begin{proof}
    The result follows readily by verifying the following mapping is an isometric isomorphism: $$\ph : (\cl D_i^{\bnd})^* \rightarrow A$$
    $$
    \ph(f)(\bm x) \BYDEF \sum_{\a \in \mathbb{N}^{n_i}} f(\bm d_\a) \bm x^\a,
    $$
    where $\bm d_\a(\bm x^\b) = 1$ if $\a = \b$ and $0$ otherwise.\\
    \textit{1. Real-analyticity of $\ph(f)$.} Let $f \in (\cl D_i^{\bnd})^*$. Then $$|f(\bm d_{\alpha})| \le \norm{f}^*_i \norm{\bm d_{\alpha}}_i = \norm{f}^*_i \sup_{\beta \in \mathbb{N}^{n_i}}|\bm d_{\alpha}(\bm x^{\beta})| / \beta ! = \norm{f}^*_i / \alpha !. $$
    The absolute value of the $\a$-th term in the series of $\ph(f)$ is $|f(\bm d_\a) \bm x^\a| = |f(\bm d_\a)| |\bm x^\a| \le (\norm{f}_i^* / \a!) |\bm x^\a|$
    and the $\a!$ in the denominator ensures convergence. That $\ph(f)$ also belongs to the space $A$ (i.e., has a finite $A$-norm) is established by the isometry in point 5. \\
    \textit{2. Linearity.} Let $f, g \in (\cl D_i^{\bnd})^*$ and $a, b \in \mathbb{R}$. The fact $\ph(af + bg) = a \ph(f) + b \ph(g)$ follows readily from the definition of $\ph$.\\
    \textit{3. Injectivity.} Let $\ph(f) = \ph(g)$. By linearity, this is equivalent to showing that if $\ph(F)=0$ for $F=f-g$, then $F=0$.
    The condition $\ph(F)=0$ means that 
    $$\sum_{\a \in \mathbb{N}^{n_i}} F(\bm d_\a) \bm x^\a = 0.$$  This implies that all coefficients are zero: $F(\bm d_\a) = 0$ for all $\a \in \mathbb{N}^{n_i}$.
    From the definition of the $A$-norm, this gives $\norm{\ph(F)}_A = \sum_{\a \in \mathbb{N}^{n_i}} \a!|F(\bm d_\a)| = 0$.
    The norm-preservation property, established in point 5 below, shows that $\norm{F}_i^* = \norm{\ph(F)}_A$.
    Therefore, $\norm{F}_i^* = 0$, implying $F=0$, and subsequently $f=g$. Thus, $\ph$ is injective.\\
    \textit{4. Surjectivity.} Let $h = \sum_{\a \in \mathbb{N}^{n_i}} h_\a \bm x_i^\a \in A$. Take $$f_h(\lambda) \BYDEF \sum_{\a \in \mathbb{N}^{n_i}} h_\a \lambda(\bm x_i^\a).$$ We verify that $f_h$ is well-defined and continuous. Note that the above series converges absolutely for any $\lambda \in \cl D_i^{\bnd}$:
    $$
    |f_h(\lambda)| \le \sum_{\a \in \mathbb{N}^{n_i}} |h_\a| |\lambda(\bm x_i^\a)| \le \sum_{\a \in \mathbb{N}^{n_i}} |h_\a| (\norm{\lambda}_i \a!) = \norm{\lambda}_i \left( \sum_{\a \in \mathbb{N}^{n_i}} \a! |h_\a| \right) = \norm{\lambda}_i \norm{h}_A.
    $$
    This inequality also shows that $f_h$ is a bounded linear functional with $\norm{f_h}_i^* \le \norm{h}_A$, so $f_h \in (\cl D_i^{\bnd})^*$. We may then apply the definition of $\ph(\cdot)$:
    $$
    \ph(f_h) = \sum_{\a \in \mathbb{N}^{n_i}} h_\a \bm x_i^\a = h,
    $$
    as required.\\
    \textit{5. Norm preservation.} We show:
    \begin{enumerate}[label=(\alph*)]
        \item $\norm{f}_i^* \le \sum_{\a \in \mathbb{N}^{n_i}} \a ! | f(\bm d_\a) |$,
        \item $\norm{f}_i^* \ge \sum_{\a \in \mathbb{N}^{n_i}} \a ! | f(\bm d_\a) |$, and
        \item $\| \ph(f) \|_{A} = \sum_{\a \in \mathbb{N}^{n_i}} \a ! | f(\bm d_\a) |$. 
    \end{enumerate}
    (a) Let $f \in (\cl D_i^{\bnd})^*$ and $\lambda \in \cl D_i^{\bnd}$ with $\norm{\lambda}_i \le 1$. Then,
    $$
    |\apply{f}{\lambda}| = \left| \sum_{\a \in \mathbb{N}^{n_i}} f(\bm d_\a) \lambda(\bm x_i^\a) \right| \le \sum_{\a \in \mathbb{N}^{n_i}} |f(\bm d_\a)| |\lambda(\bm x_i^\a)|.
    $$
    Since $\norm{\lambda}_i \le 1$, we have $|\lambda(\bm x_i^\a)| \le \a! \norm{\lambda}_i \le \a!$. Thus,
    $$
    |\apply{f}{\lambda}| \le \sum_{\a \in \mathbb{N}^{n_i}} |f(\bm d_\a)| \a! = \norm{\ph(f)}_A.
    $$
    Taking the supremum over $\lambda$ with $\norm{\lambda}_i \le 1$ yields $\norm{f}_i^* \le \norm{\ph(f)}_A$. \\
    (b) Given a non-zero $f$, define $\lambda_N \in \drng_i^\bnd$ as the finite sum:
    $$
    \lambda_N \BYDEF \sum_{|\a| \le N} \a! \, \text{sgn}(f(\bm d_\a))\bm d_\a.
    $$
    The norm of this functional is
    $$
    \norm{\lambda_N}_i = \sup_{\b \in \mathbb{N}^{n_i}} \frac{|\apply{\lambda_N}{\bm x_i^\b}|}{\b!} = \sup_{|\b| \le N} \frac{|\b! \, \text{sgn}(f(\bm d_\b))|}{\b!} = 1.
    $$
    Since $\norm{\lambda_N}_i = 1$, by the definition of the operator norm, we have $\norm{f}_i^* \ge |f(\lambda_N)|$ for any $N$. Evaluating $|f(\lambda_N)|$  gives
    $$
    |f(\lambda_N)| = \left| \sum_{|\a| \le N} \a! \, \text{sgn}(f(\bm d_\a)) f(\bm d_\a) \right| = \sum_{|\a| \le N} \a! |f(\bm d_\a)|.
    $$
    This inequality holds for all $N$. Taking the limit as $N \to \infty$, we obtain the desired bound:
    $$
    \norm{f}_i^* \ge \lim_{N\to\infty} \sum_{|\a| \le N} \a! |f(\bm d_\a)| = \sum_{\a \in \mathbb{N}^{n_i}} \a! |f(\bm d_\a)|.
    $$
    (c) We have 
    $$\norm{\ph(f)}_A = \norm{\sum_{\a \in \mathbb{N}^{n_i}} f(\bm d_\a) \bm x^\a}_A   $$
    $$=  \sum_{\a \in \mathbb{N}^{n_i}} \a! |f(\bm d_\a)|, $$
    as required.
Thus, $\ph$ is an isometric isomorphism.
\end{proof}

\end{document}